\theoremstyle{plain}
\newtheorem{theorem}{Theorem}[section]
\newtheorem{proposition}[theorem]{Proposition}
\newtheorem{definition}[theorem]{Definition}
\newtheorem{lemma}[theorem]{Lemma}
\newtheorem{corollary}[theorem]{Corollary}
\newtheorem{example}[theorem]{Example}
\begin{document}
\title[On some generalizations of BCC-algebras]
      {On some generalizations of BCC-algebras}

\author{Wieslaw A. Dudek}
\address{Institute of Mathematics and Computer Science, Wroclaw University of Technology\\
 Wyb. Wyspianskiego 27, 50-370 Wroclaw, Poland}
 \email{wieslaw.dudek@pwr.wroc.pl}
\author{Janus Thomys}
\address{Lister Meile 56, 30161 Hannover\\
 Germany}
\email{janus.thomys@htp-tel.de}

\begin{abstract}
We describe weak BCC-algebras (also called BZ-algebras) in which
the condition $(xy)z=(xz)y$ is satisfied only in the case when
elements $x,y$ belong to the same branch. We also characterize
branchwise commutative and branchwise implicative weak
BCC-algebras satisfying this condition. We also describe
connections between various types of implicative weak
BCC-algebras.

 \end{abstract}
 \maketitle
  \footnotetext{{\it 2010 Mathematics Subject Classification.}
           03G25, 06F35}
\footnotetext{{\it Key words and phrases.} weak BCC-algebra,
BCC-algebra, branchwise commutative, branchwise implicative weak
BCC-algebra, BZ-algebra, branch, condition $(S)$.}

\section{Introduction}

Realistic simulation of human decision making process has been a goal for artificial intelligence development for decades. Decisions that are made based on both certain and uncertain types of information are a special focus and the logic behind those decisions is dominant in proof theory. Such logic is at the core of any system or a tool for applications, in both mathematics and computers. In addition to the classical logic, many logic systems that deal with various aspects of uncertainty of information (e.g. fuzziness, randomness, etc.) are built on top of it, such as many-valued logic or fuzzy logic. A real life example of such uncertainty may be incomparability of data. To deal with fuzzy and uncertain information, computer science relies heavily on non-classical logic.

In recent years, motivated by both theory and application, the
study of $t$-norm-based logic systems and the corresponding
pseudo-logic systems has become a great focus in the field of
logic. Here, $t$-norm-base algebraic investigations were first to
the corresponding algebraic investigations, and in the case of
pseudo-logic systems, algebraic development was first to the
corresponding logical development (see for example \cite{Ior}). It
is well known that BCK and BCI-algebras are inspired by some
implicational logic. This inspiration is illustrated by the
similarities of names. We have BCK-algebras and a BCK positive
logic, BCI-algebras and a BCI positive logic and so on. In many
cases, the connection between such algebras and their
corresponding logics is much stronger. In such cases one can give
a translation procedure which translates all well formed formulas
and all theorems of a given logic $\mathcal L$ into terms and
theorems of the corresponding algebra. Nevertheless
the study of algebras motivated by known logics is interesting and
very useful for corresponding logics also in the case when the
full inverse translation procedure is impossible.

To solve some problems on BCK-algebras
Y.Komori introduced in \cite{Ko} the new class of algebras called
BCC-algebras. In view of strongly connections with a
BIK$^+$-logic, BCC-algebras also are called BIK$^+$-algebras (cf.
\cite{Zh'07}) or BZ-algebras (cf. \cite{ZY}). Nowadays, the
mathematicians especially from China, Japan and Korea, have been
studying various generalizations of BCC-algebras. All these
algebras have one distinguished element and satisfy some common
identities. One of very important identities is the identity
$(xy)z=(xz)y$. This identity is satisfied in such algebras as
pre-logics (cf. \cite{Ch'02a}), Hilbert algebras and implication
algebras (cf. \cite{Ch'07}) strongly connected with MV-algebras
(cf. \cite{Ch'04}). This identity also holds in BCK-algebras and
some their generalizations, but not in BCC-algebras. BCC-algebras
satisfying this identity are BCK-algebras (cf. \cite{Du'90} or
\cite{Du'92}). The class of all bounded commutative BCC-algebras
is equivalent to the class of all MV-algebras (cf. \cite{Ch'07a}).

Therefore, it makes sense to consider such BCC-algebras and some
their generalizations for which this identity is satisfied only by
elements belonging to some subsets (cf. \cite{Du'10}).

On the other hand, many mathematicians investigate BCI-algebras in
which some basic properties are restricted to some subset called
branches. For weak BCC-algebras such study was initiated in
\cite{Du'10} and continued in \cite{BT}.

In this paper we describe branchwise commutative and branchwise
implicative weak BCC-algebras in which the condition $(xy)z=(xz)y$
is satisfied only in the case when elements $x,y$ belong to the
same branch. We also characterize branchwise commutative and
branchwise implicative weak BCC-algebras satisfying this
condition. We also describe connections between various
generalizations of implicative weak BCC-algebras. Finally, we
consider weak BCC-algebras with condition $(S)$.

\section{Basic definitions and facts}
The BCC-operation will be denoted by juxtaposition. Dots will be
used only to avoid repetitions of brackets. For example, the
formula $((xy)(zy))(xz)=0$ will be written in the abbreviated form
as $(xy\cdot zy)\cdot xz=0$.

\begin{definition}\label{D-11.1}\rm A {\it weak BCC-algebra} is a system $(G;\cdot,0)$ of type $(2,0)$
satisfying the following axioms:
\begin{enumerate}
\item[$(i)$] \ $(xy\cdot zy)\cdot xz= 0$,
\item[$(ii)$] \ $xx = 0$,
\item[$(iii)$] \ $x0 = x$,
\item[$(iv)$] \ $xy = yx = 0\Longrightarrow x = y$.
\end{enumerate}
\end{definition}

By many mathematicians, especially from China and Korea, weak
BCC-algebras are called {\it BZ-algebras} (see for example \cite{ZY}), but we save the first name because it coincides
with the general concept of names used for algebras inspired by
various logics.

A weak BCC-algebra satisfying the identity
\begin{enumerate}
\item[$(v)$] \ $0x = 0$
\end{enumerate}
is called a {\it BCC-algebra}. A BCC-algebra with the condition
\begin{enumerate}
\item[$(vi)$] \ $(x\cdot xy)y = 0$
\end{enumerate}
is called a {\it BCK-algebra}.

An algebra $(G;\cdot,0)$ of type $(2,0)$ satisfying the axioms
$(i)$, $(ii)$, $(iii)$, $(iv)$ and $(vi)$ is called a {\it
BCI-algebra}. A weak BCC-algebra is a BCI-algebra if and only if
it satisfies the identity $xy\cdot z = xz\cdot y$ (cf.
\cite{Du'90}).

A weak BCC-algebra which is not a BCC-algebra is called {\it
proper} if it is not a BCI-algebra. A proper weak BCC-algebra has
at least four elements. But there are only two such non-isomorphic
weak BCC-algebras (see \cite{Du'96}). 

In any weak BCC-algebra we can define a natural partial order
$\leqslant$ putting
\begin{equation}\label{e1}
x\leqslant y \Longleftrightarrow xy = 0.
\end{equation}
This means that a weak BCC-algebra can be considered as a
partially ordered set with some additional properties.

\begin{proposition}\label{P-11.2} An algebra $(G;\cdot,0)$ of type $(2,0)$ with a relation
$\leqslant$ defined by \eqref{e1} is a weak BCC-algebra if and
only if for all $x,y,z\in G$ the following conditions are
satisfied:
\begin{enumerate}
\item[$(i')$] \ $xy\cdot zy\leqslant xz$,
\item[$(ii')$] \ $x\leqslant x$,
\item[$(iii')$] \ $x0 = x$,
\item[$(iv')$] \ $x\leqslant y$ and $y\leqslant x$ imply $x =
y$.\hfill $\Box{}$
\end{enumerate}
\end{proposition}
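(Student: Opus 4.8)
The plan is to observe that Proposition~\ref{P-11.2} is a pure notational reformulation: each of the four axioms $(i)$–$(iv)$ of Definition~\ref{D-11.1} becomes, after applying the defining equivalence \eqref{e1} (which simply abbreviates "$\,\cdot\,=0$" by $\leqslant$), term-by-term identical to the corresponding condition $(i')$–$(iv')$. So I would not attempt any structural argument; instead I would verify the four equivalences one line at a time.

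Concretely: $(iii)$ and $(iii')$ are literally the same statement, so there is nothing to check there. Axiom $(ii)$, namely $xx=0$, is by \eqref{e1} exactly $x\leqslant x$, which is $(ii')$, and the reverse reading unwinds $(ii')$ back to $xx=0$. Axiom $(i)$, namely $(xy\cdot zy)\cdot xz=0$, becomes, on applying \eqref{e1} with the compound terms $xy\cdot zy$ and $xz$ in the roles of the two variables, the inequality $xy\cdot zy\leqslant xz$, i.e. $(i')$; again the converse is the same unwinding. Finally, the antecedent "$xy=yx=0$" of $(iv)$ is, by two applications of \eqref{e1}, exactly the conjunction "$x\leqslant y$ and $y\leqslant x$", while the consequent "$x=y$" is unchanged, so $(iv)$ and $(iv')$ assert the same implication. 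Combining the four equivalences shows that $(G;\cdot,0)$ satisfies $(i)$–$(iv)$ precisely when it satisfies $(i')$–$(iv')$.

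There is no real obstacle here; this is a routine translation proof. The only point that deserves a word of care is that \eqref{e1} is being invoked with compound terms substituted for the variables, but since \eqref{e1} is an equivalence of formulas holding for all elements of $G$, such substitution is legitimate, and the argument goes through without incident.
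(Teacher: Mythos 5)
Your proof is correct, and it is exactly the routine translation the paper has in mind (the proposition is stated with a $\Box$ and no written proof precisely because each of $(i)$--$(iv)$ turns into the corresponding $(i')$--$(iv')$ by unwinding the definition \eqref{e1}, with compound terms substituted as you note). Nothing to add.
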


From $(i')$ it follows that in weak BCC-algebras implications
\begin{equation} \label{e2}
x \leqslant y \Longrightarrow xz \leqslant yz
\end{equation}
\begin{equation} \label{e3}
x \leqslant y \Longrightarrow zy \leqslant zx
\end{equation}
are valid for all $x,y,z \in G$.

In the investigations of algebras connected with various types of
logics an important role plays the self map $\varphi(x)=0x$. This
map was formally introduced in \cite{DT} for BCH-algebras, but
earlier it was used in \cite{Du'86} and \cite{Du'88} to
investigate some classes of BCI-algebras. Later, in \cite{DZW}, it
was used to characterize some ideals of weak BCC-algebras. Recall
that a subset $A$ weak BCC-algebra is called a {\it BCC-ideal} if
$0\in A$, and for all $y\in A$ from $xy\cdot z\in A$ it follows
$xz\in A$. A special case of a BCC-ideal  is a {\it BCK-ideal},
i.e., a subset $A$ such that $0\in A$, and  $y,xy\in A$ imply
$x\in A$. In the literature BCK-ideals also are called {\it ideals}.

The main properties of this map are collected in the following
theorem proved in \cite{DZW}.

\begin{theorem}\label{T-fi} Let $G$ be a weak BCC-algebra. Then
\begin{enumerate}
\item[$(1)$] \ $\varphi^{2}(x) \leqslant x$,
\item[$(2)$] \ $x\leqslant y\Longrightarrow\varphi(x)=\varphi(y)$,
\item[$(3)$] \ $\varphi^{3}(x) = \varphi(x)$,
\item[$(4)$] \ $\varphi^{2}(xy) = \varphi^{2}(x)\varphi^{2}(y)$
\end{enumerate}
for all $x,y\in G$. \hfill$\Box{}$
\end{theorem}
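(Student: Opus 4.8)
The plan is to reduce everything to the order-theoretic form of the axioms in Proposition~\ref{P-11.2} together with the monotonicity implications \eqref{e2} and \eqref{e3}; note that \eqref{e3} with $z=0$ says exactly that $\varphi$ is order-reversing, so $\varphi^{2}=\varphi\circ\varphi$ is order-preserving. Two specialisations of $(i')$ carry most of the load. Putting the first two variables of $(i')$ equal and using $xx=0$ gives $\varphi(zx)\leqslant xz$ for all $x,z$; and taking $z=0$ in $(i')$ and using $x0=x$ gives $xy\cdot\varphi(y)\leqslant x$ for all $x,y$.

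Part $(1)$ follows from the first of these by putting $z=0$: $\varphi^{2}(x)=\varphi(0x)\leqslant x0=x$. For part $(3)$ I would use antisymmetry $(iv')$: applying $(1)$ to $\varphi(x)$ in place of $x$ gives $\varphi^{3}(x)\leqslant\varphi(x)$, while applying the order-reversing map $\varphi$ to $\varphi^{2}(x)\leqslant x$ gives $\varphi(x)\leqslant\varphi^{3}(x)$; hence $\varphi^{3}(x)=\varphi(x)$. For part $(2)$, assume $x\leqslant y$. One direction is just \eqref{e3} with $z=0$, which gives $\varphi(y)\leqslant\varphi(x)$. For the reverse direction, feed $xy=0$ into the second specialisation above: $0\cdot\varphi(y)\leqslant x$, i.e.\ $\varphi^{2}(y)\leqslant x$; applying the order-reversing $\varphi$ and then $(3)$ yields $\varphi(x)\leqslant\varphi^{3}(y)=\varphi(y)$. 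Antisymmetry now gives $\varphi(x)=\varphi(y)$.

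For part $(4)$, first note that from $\varphi^{2}(y)\leqslant y$ and $\varphi^{2}(x)\leqslant x$ together with \eqref{e2} and \eqref{e3} we obtain $xy\leqslant x\varphi^{2}(y)$ and $\varphi^{2}(x)\varphi^{2}(y)\leqslant x\varphi^{2}(y)$. Applying $(2)$ to each of these comparisons gives $\varphi(xy)=\varphi(x\varphi^{2}(y))=\varphi(\varphi^{2}(x)\varphi^{2}(y))$, and one more application of $\varphi$ gives $\varphi^{2}(xy)=\varphi^{2}\bigl(\varphi^{2}(x)\varphi^{2}(y)\bigr)$. Since $(3)$ forces $\varphi^{4}=\varphi\circ\varphi^{3}=\varphi\circ\varphi=\varphi^{2}$, every element of the image $\varphi^{2}(G)$ is a fixed point of $\varphi^{2}$; so it would remain only to see that $\varphi^{2}(x)\varphi^{2}(y)$ again lies in $\varphi^{2}(G)$, and then $\varphi^{2}\bigl(\varphi^{2}(x)\varphi^{2}(y)\bigr)=\varphi^{2}(x)\varphi^{2}(y)$ finishes $(4)$.

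The step I expect to be the real obstacle is precisely that last one: that $\varphi^{2}(G)$ is closed under the operation, i.e.\ is a subalgebra of $G$. This is the structural heart of the matter — $\varphi^{2}(G)$ is the largest ``group-like'' ($p$-semisimple) part of $G$, on which $\varphi$ restricts to an order-reversing involution — and it does not follow formally from $(1)$--$(3)$. I would attempt to get it by sharpening the inequality $\varphi(ab)\leqslant ba$ (a case of the first specialisation above) to an equality when $a,b\in\varphi^{2}(G)$, using $(i')$ in full together with the facts $\varphi^{2}(a)=a$, $\varphi^{2}(b)=b$; alternatively one may invoke the known structure theory of weak BCC-algebras. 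Once $\varphi^{2}(G)$ is seen to be a subalgebra, all four parts assemble as above.
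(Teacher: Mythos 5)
Parts $(1)$--$(3)$ of your argument are correct, and your reduction of $(4)$ to the statement ``$\varphi^{2}(a)\,\varphi^{2}(b)$ is again a fixed point of $\varphi^{2}$'' is also correct (for the record, the paper itself gives no proof of this theorem; it is quoted from the cited literature). The problem is that you stop exactly at the decisive point: you never establish that $\varphi^{2}(G)$ is closed under the operation, you only say you ``would attempt'' to sharpen $\varphi(ab)\leqslant ba$ to an equality on $\varphi^{2}(G)$, or else ``invoke the known structure theory''. The second option is not available here, because the fact that $I(G)=\varphi(G)$ is a subalgebra is precisely the kind of statement that the cited sources obtain together with (indeed essentially equivalently to) property $(4)$; using it would be circular in a from-scratch proof. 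So, as submitted, part $(4)$ has a genuine, acknowledged gap.

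The gap can in fact be closed with the axioms you already isolated, along the line you hint at. From $(i')$ with the first variable equal to $0$ you get $\varphi(v)\cdot wv\leqslant\varphi(w)$ for all $v,w$. Taking $v=\varphi(x)$, $w=yx$ and combining with your second specialisation $yx\cdot\varphi(x)\leqslant y$ via \eqref{e3} gives $\varphi^{2}(x)\cdot y\leqslant\varphi^{2}(x)\cdot\bigl(yx\cdot\varphi(x)\bigr)\leqslant\varphi(yx)$ for all $x,y$. Hence, if $a=\varphi^{2}(a)$ and $b$ is arbitrary, then $ab\leqslant\varphi(ba)$. On the other hand your first specialisation $\varphi(ab)\leqslant ba$ together with $(2)$ yields $\varphi^{2}(ab)=\varphi(ba)$, so $ab\leqslant\varphi^{2}(ab)$, and with $(1)$ and antisymmetry $\varphi^{2}(ab)=ab$. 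Applied to $a=\varphi^{2}(x)$, $b=\varphi^{2}(y)$ this is exactly the closure fact you needed, and $(4)$ then follows by your own reduction $\varphi^{2}(xy)=\varphi^{2}\bigl(\varphi^{2}(x)\varphi^{2}(y)\bigr)$. In short: the architecture of your proof is sound, but the proof of $(4)$ is incomplete until this step is written out.
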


A weak BCC-algebra in which Ker$\varphi(x)=\{0\}$ is called {\it
group-like} or {\it anti-grouped}. A weak BCC-algebra
$(G;\cdot,0)$ is group-like if and only if there exists a group
$(G;*,0)$ such that $xy=x*y^{-1}$ (cf. \cite{Du'88}, \cite{DKB} or
\cite{ZY}).

The set of all minimal (with respect to $\leqslant$) elements of
$G$ will be denoted by $I(G)$. It is a subalgebra of $G$.
Moreover,
$$
I(G)=\varphi(G)=\{a\in G:\varphi^2(a)=a\}
$$
(cf. \cite{DKB}). The set
\[
B(a)=\{x\in G\,:\,a\leqslant x\},
\]
where $a\in I(G)$ is called the {\it branch} initiated by $a$.
Branches initiated by different elements are disjoint (cf.
\cite{DZW}). Comparable elements are in the same branch, but there
are weak BCC-algebras containing branches in which not all
elements are comparable.

\begin{lemma}\label{L-21} {\rm (cf. \cite{DKB})}
Elements $x$ and $y$ are in the same branch if and only if $xy\in
B(0)$.\hfill$\Box{}$
\end{lemma}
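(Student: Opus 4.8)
The plan is to reduce the whole statement to the self-map $\varphi(x) = 0x$ and its properties collected in Theorem~\ref{T-fi}. First I would record the two translations that recast the lemma in terms of $\varphi$. Since $0 \in I(G)$, the branch $B(0)$ is exactly $\{z \in G : 0 \leqslant z\} = \{z \in G : 0z = 0\} = \{z \in G : \varphi(z) = 0\}$, i.e.\ the kernel of $\varphi$. Also, because $\varphi^2(x) \leqslant x$ by Theorem~\ref{T-fi}(1) and $\varphi^2(x) \in \varphi(G) = I(G)$, each $x \in G$ belongs to the branch $B(\varphi^2(x))$; since distinct branches are disjoint, this forces $x$ and $y$ to lie in the same branch precisely when $\varphi^2(x) = \varphi^2(y)$. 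So the claim to prove becomes: $\varphi^2(x) = \varphi^2(y)$ if and only if $\varphi(xy) = 0$.

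For the forward implication I would start from $\varphi^2(x) = \varphi^2(y)$ and compute, using Theorem~\ref{T-fi}(4) and axiom $(ii)$, that $\varphi^2(xy) = \varphi^2(x)\varphi^2(y) = \varphi^2(x)\varphi^2(x) = 0$. Applying $\varphi$ once more and invoking Theorem~\ref{T-fi}(3), I get $\varphi(xy) = \varphi^3(xy) = \varphi(\varphi^2(xy)) = \varphi(0) = 0$, so $xy \in B(0)$.

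For the converse I would take $xy \in B(0)$, that is $\varphi(xy) = 0$, whence $\varphi^2(xy) = \varphi(0) = 0$. Theorem~\ref{T-fi}(4) then yields $\varphi^2(x)\varphi^2(y) = 0$, i.e.\ $\varphi^2(x) \leqslant \varphi^2(y)$. The final move is to use that $\varphi^2(y) \in I(G)$ is a minimal element of $G$, so that anything below it coincides with it; hence $\varphi^2(x) = \varphi^2(y)$ and $x$ and $y$ share a branch.

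I expect the one place that needs care is the first paragraph, namely pinning down the informal phrase ``in the same branch'' as the clean condition $\varphi^2(x) = \varphi^2(y)$: this leans on the branches partitioning $G$, which itself comes from $\varphi^2(x) \leqslant x$ together with disjointness of distinct branches. The remaining work is short arithmetic with $\varphi$, and in the converse the only subtlety is remembering that minimality of $\varphi^2(y)$ is exactly what promotes the inequality $\varphi^2(x) \leqslant \varphi^2(y)$ — which is all that $\varphi^2(x)\varphi^2(y)=0$ gives directly — to the desired equality.
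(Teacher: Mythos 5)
Your argument is correct. Note that the paper itself offers no proof of this lemma --- it is quoted from \cite{DKB} with only a $\Box$ --- so there is no in-text argument to compare yours with; what you have written is a self-contained derivation from exactly the facts the paper records beforehand: Theorem~\ref{T-fi}, the equality $I(G)=\varphi(G)=\{a:\varphi^2(a)=a\}$, and the disjointness of branches. Each step checks out: $B(0)=\ker\varphi$ since $0\leqslant z$ means $0z=0$; every $x$ lies in $B(\varphi^2(x))$ because $\varphi^2(x)\in\varphi(G)=I(G)$ and $\varphi^2(x)\leqslant x$, so by disjointness ``same branch'' is indeed equivalent to $\varphi^2(x)=\varphi^2(y)$; the forward direction is the computation $\varphi^2(xy)=\varphi^2(x)\varphi^2(y)=0$ followed by $\varphi(xy)=\varphi^3(xy)=\varphi(0)=0$; and in the converse the inequality $\varphi^2(x)\leqslant\varphi^2(y)$ obtained from $\varphi^2(x)\varphi^2(y)=0$ is promoted to equality precisely by the minimality of $\varphi^2(y)$, as you say. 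You correctly flag the only delicate point, namely that the partition of $G$ into branches is what turns the informal phrase into the condition $\varphi^2(x)=\varphi^2(y)$; with that in place the rest is routine.
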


\begin{lemma}\label{L-22} {\rm (cf. \cite{DKB})}
$B(0)$ is a subalgebra of $G$. It is a maximal BCC-algebra
contained in $G$. \hfill$\Box{}$
\end{lemma}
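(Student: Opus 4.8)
The plan is to verify the three assertions in turn: that $B(0)$ is closed under the operation (hence a subalgebra), that it satisfies axiom $(v)$ (hence a BCC-algebra), and that every BCC-subalgebra of $G$ is contained in it (hence it is maximal, in fact the largest such).

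First I would recall that $B(0)=\{x\in G:0\leqslant x\}=\{x\in G:0x=0\}=\mathrm{Ker}\,\varphi$, and that $0\in B(0)$ by axiom $(ii)$. For closure, take $x,y\in B(0)$; since $B(0)$ is precisely the branch initiated by $0$, the elements $x$ and $y$ lie in the same branch, so Lemma \ref{L-21} gives $xy\in B(0)$. (Alternatively, one can bypass Lemma \ref{L-21} and argue straight from Theorem \ref{T-fi}: $x,y\in B(0)$ forces $\varphi(x)=\varphi(y)=0$, hence $\varphi^{2}(x)=\varphi^{2}(y)=\varphi(0)=0$; then $(4)$ gives $\varphi^{2}(xy)=\varphi^{2}(x)\varphi^{2}(y)=0$, and applying $\varphi$ together with $(3)$ yields $\varphi(xy)=\varphi^{3}(xy)=\varphi(0)=0$, i.e. $xy\in B(0)$.) Being a subalgebra, $B(0)$ inherits the identities $(i)$–$(iii)$ and the quasi-identity $(iv)$, so it is itself a weak BCC-algebra.

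Next, a weak BCC-algebra is a BCC-algebra exactly when it satisfies $(v)$, that is $0x=0$. But for every $x\in B(0)$ we have $0x=0$ by the very definition of $B(0)$, so $B(0)$ satisfies $(v)$ and is a BCC-algebra.

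Finally, for maximality, suppose $H\subseteq G$ is any subalgebra of $G$ that happens to be a BCC-algebra. Its distinguished element is the element $0$ of $G$ (a subalgebra contains $0$, and $0$ is the unique element of $G$ with $x0=x$ and $00=0$), so $H$ satisfies $(v)$ with this $0$; hence $0x=0$, i.e. $0\leqslant x$, for every $x\in H$, which means $H\subseteq B(0)$. Thus $B(0)$ contains every BCC-subalgebra of $G$, so it is the largest, in particular a maximal, BCC-algebra contained in $G$. The only place needing a little care is this last step: one must notice that the constant $0$ of a BCC-subalgebra coincides with the constant $0$ of $G$, so that the BCC-condition $0x=0$ in $H$ is literally the defining condition for membership in $B(0)$; granting that, no computation is required.
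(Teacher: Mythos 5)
Your proposal is correct. Note that the paper itself gives no proof of this lemma: it is quoted from \cite{DKB} and stated with an end-of-proof mark, so there is no in-paper argument to compare against; your write-up simply supplies the missing verification. All three steps are sound: closure follows either from Lemma \ref{L-21} (which precedes the statement, so no circularity within this paper) or, as in your alternative, from Theorem \ref{T-fi} via $\varphi^{2}(xy)=\varphi^{2}(x)\varphi^{2}(y)=0$ and $\varphi=\varphi^{3}$; the BCC-axiom $(v)$ holds on $B(0)$ by the very definition $0x=0$; and your maximality argument actually proves the stronger fact that $B(0)$ is the largest BCC-subalgebra, since any subalgebra of type $(2,0)$ contains the constant $0$ of $G$ (and even a subset-with-its-own-unit would be forced to have unit $0$, because $ee=0$ by $(ii)$), so its elements satisfy $0x=0$ and lie in $B(0)$. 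The only stylistic remark is that one should also observe $0$ is minimal (from $x\leqslant 0$ and $(iii)$ one gets $x=x0=0$), so that $B(0)$ is indeed a branch; you use this implicitly when identifying $B(0)$ with $\mathrm{Ker}\,\varphi$.
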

\begin{lemma}\label{L-23} {\rm (cf. \cite{BT})} In a weak BCC-algebra $G$ for all $a,b\in I(G)$ we have
$B(a)B(b)=B(ab)$. \hfill$\Box{}$
\end{lemma}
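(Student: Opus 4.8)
The plan is to establish the two inclusions $B(a)B(b)\subseteq B(ab)$ and $B(ab)\subseteq B(a)B(b)$ separately, using mainly the properties of $\varphi$ from Theorem~\ref{T-fi} and the fact that $I(G)$ is a group-like subalgebra of $G$ (so that $ab,0b\in I(G)$ whenever $a,b\in I(G)$).

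For $B(a)B(b)\subseteq B(ab)$ I would take $x\in B(a)$, $y\in B(b)$, so $a\leqslant x$ and $b\leqslant y$. Since $a\in I(G)$ we have $\varphi^{2}(a)=a$, while $a\leqslant x$ gives $\varphi(a)=\varphi(x)$ by Theorem~\ref{T-fi}$(2)$; applying $\varphi$ again, $\varphi^{2}(x)=\varphi^{2}(a)=a$, and similarly $\varphi^{2}(y)=b$. Then Theorem~\ref{T-fi}$(4)$ gives $\varphi^{2}(xy)=\varphi^{2}(x)\varphi^{2}(y)=ab$, and Theorem~\ref{T-fi}$(1)$ gives $\varphi^{2}(xy)\leqslant xy$; hence $ab\leqslant xy$, i.e.\ $xy\in B(ab)$.

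For the reverse inclusion, fix $c\in B(ab)$; as above $\varphi^{2}(c)=ab$. I first note the cancellation $(ab)(0b)=a$: the elements $ab$, $0b$ and $(ab)(0b)$ all lie in $I(G)$, condition $(i')$ with $x=a$, $y=b$, $z=0$ gives $(ab)(0b)\leqslant a0=a$, and as every element of $I(G)$ is minimal in $G$ this forces $(ab)(0b)=a$. Now set $y:=b\in B(b)$ and $x:=c\cdot\varphi(b)$. Since $\varphi^{2}(\varphi(b))=\varphi^{3}(b)=\varphi(b)$, Theorem~\ref{T-fi}$(4)$ yields $\varphi^{2}(x)=\varphi^{2}(c)\,\varphi^{2}(\varphi(b))=(ab)\varphi(b)=(ab)(0b)=a$, so $a=\varphi^{2}(x)\leqslant x$ by Theorem~\ref{T-fi}$(1)$, i.e.\ $x\in B(a)$.

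It then remains to verify that $xy=c$, that is $\bigl(c\cdot\varphi(b)\bigr)b=c$, and this is the heart of the proof. One inequality is immediate: condition $(i')$ with $x=c$, $y=\varphi(b)$, $z=0$, together with $0\cdot\varphi(b)=\varphi^{2}(b)=b$ (valid because $b\in I(G)$), gives $\bigl(c\cdot\varphi(b)\bigr)b\leqslant c$. The opposite inequality $c\leqslant\bigl(c\cdot\varphi(b)\bigr)b$ is the main obstacle: in contrast to the first one it cannot be obtained from the order conditions $(i')$--$(iv')$ alone and genuinely uses $b\in I(G)$. It is equivalent to the assertion that the right translation $z\mapsto zb$ carries $B(a)$ \emph{onto} $B(ab)$, or, in identity form, to $(zd)(0d)=z$ for all $z\in G$ and $d\in I(G)$ (taking $d=\varphi(b)$, for which $0d=b$, then gives the desired equality with $z=c$). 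For this identity the approach I would try is: using $\varphi^{2}$ and the cancellation $(ab)(0b)=a$, check that $z$ and $(zd)(0d)$ lie in one branch, so that $z\cdot\bigl((zd)(0d)\bigr)\in B(0)$ by Lemma~\ref{L-21} and hence $\geqslant 0$ in the BCC-algebra $B(0)$; then, combining $(zd)(0d)\leqslant z$ with the group-like structure of $I(G)$ and the fact that $\varphi$ restricted to $I(G)$ is an involution, force $z\cdot\bigl((zd)(0d)\bigr)=0$ (alternatively, one may quote the known bijectivity of the right translations by elements of $I(G)$). Once $(zd)(0d)=z$ is in hand, $\bigl(c\cdot\varphi(b)\bigr)b=c$ follows, and with it the reverse inclusion and the lemma.
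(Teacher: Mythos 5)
Your first inclusion is correct and cleanly argued: for $x\in B(a)$ and $y\in B(b)$ you get $\varphi^{2}(x)=a$ and $\varphi^{2}(y)=b$ from Theorem \ref{T-fi}$(2)$, hence $ab=\varphi^{2}(x)\varphi^{2}(y)=\varphi^{2}(xy)\leqslant xy$ by Theorem \ref{T-fi}$(4)$ and $(1)$, and $ab\in I(G)$ because $I(G)$ is a subalgebra, so $xy\in B(ab)$. This is in fact the only content of Lemma \ref{L-23} that the paper ever uses (the lemma is quoted from \cite{BT} without proof): in the proofs of Theorems \ref{T-42}, \ref{T-55} and \ref{T-510} it is invoked solely to conclude that a product of elements of $B(a)$ and $B(b)$ lies in $B(ab)$, i.e., that the branch of a product depends only on the branches of the factors.

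The reverse inclusion, which you correctly single out as the hard part and leave as a sketch, cannot be completed, because the statement you reduce it to is false. The identity $(zd)(0d)=z$ for all $z\in G$, $d\in I(G)$ (equivalently, surjectivity of the right translation $z\mapsto zb$ from $B(a)$ onto $B(ab)$, or the ``known bijectivity of right translations by elements of $I(G)$'' you propose to quote) already fails in the paper's own Example \ref{Ex-31}: there $I(G)=\{0,c\}$, $B(0)=\{0,a,b\}$, $B(c)=\{c,d\}$, and $(a\cdot c)\cdot(0\cdot c)=c\cdot c=0\neq a$. Worse, the literal set equality $B(a)B(b)=B(ab)$ fails there too: $B(c)B(c)=\{c\cdot c,\,c\cdot d,\,d\cdot c,\,d\cdot d\}=\{0,a\}\subsetneq B(0)=B(c\cdot c)$, since the element $b$ occurs in the Cayley table only as $b\cdot 0$. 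Solidity does not rescue this: in the solid weak BCC-algebra of Example \ref{Ex-25} one has $B(0)B(2)=\{4\}\subsetneq B(4)=\{4,5\}=B(0\cdot 2)$. So no argument will give you $c\leqslant(c\cdot 0b)\cdot b$ in general; the equality in Lemma \ref{L-23} has to be read branchwise (the complex product $B(a)B(b)$ is contained in the single branch $B(ab)$), which is exactly the inclusion your first part establishes, and nothing more should be claimed or attempted.
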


The identity
\begin{equation}\label{e-sol}
 xy\cdot z=xz\cdot y.
\end{equation}
plays an important role in the theory of BCI-algebras.
It is used in the proofs of many basic facts.

\begin{definition}\rm
A weak BCC-algebra $G$ is called {\em solid} if the above
condition is valid for all $x$, $y$ belonging to the same branch
and arbitrary $z\in G$.
\end{definition}

A simple example of a solid weak BCC-algebra is a BCI-algebra.
Also BCK-algebra is a solid weak BCC-algebra. A solid BCC-algebra
is a BCK-algebra. But there are solid weak BCC-algebras which are
not BCI-algebras. The smallest such weak BCC-algebra has $5$
elements (cf. \cite{Du'10}).

\begin{example}\label{Ex-25}\rm Consider the set $X=\{0,1,2,3,4,5\}$ with the operation defined by the following
table:

\begin{center}$
\begin{array}{c|cccccc} \cdot&0&1&2&3&4&5\\ \hline\rule{0pt}{11pt}
0&0&0&4&4&2&2\\
1&1&0&4&4&2&2\\
2&2&2&0&0&4&4\\
3&3&2&1&0&4&4\\
4&4&4&2&2&0&0\\
5&5&4&3&3&1&0
\end{array}$\\[3mm]
\end{center}
Since $(S;*,0)$, where $S=\{0,1,2,3,4\}$, is a BCI-algebra (see
\cite{Huang}), it is not difficult to verify that
$(X;\cdot,0)$ is a weak BCC-algebra. It is proper because $(5\cdot 3)\cdot 2\ne
(5\cdot 2)\cdot 3$. Simple calculations show that this weak BCC-algebra is
solid. \hfill$\Box{}$
\end{example}

\begin{proposition}\label{P26} {\rm (cf. \cite{Du'10})}
In any solid weak BCC-algebra we have

$(a)$ \ $x\cdot xy\leqslant y$,

$(b)$ \ $x(x\cdot xy)=xy$

\noindent for all $x,y$ belonging to the same branch.
\hfill$\Box{}$
\end{proposition}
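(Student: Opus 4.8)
The plan is to get both parts from one well-chosen instance of the solidity condition \eqref{e-sol} together with the natural order, so there is essentially no computation to grind through.

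For $(a)$, let $x,y$ lie in a common branch $B(a)$ with $a\in I(G)$. Since $x$ and $y$ belong to the same branch, solidity lets me apply \eqref{e-sol} to the pair $x,y$ with the arbitrary element $z:=xy$; this gives $(x\cdot y)\cdot(xy)=(x\cdot (xy))\cdot y$. The left-hand side equals $(xy)(xy)=0$ by axiom $(ii)$, hence $(x\cdot xy)\cdot y=0$, which is precisely $x\cdot xy\leqslant y$.

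For $(b)$, I would first note that $x\cdot xy$ lies in the same branch as $x$: by $(a)$ it is comparable with $y$ (it lies below $y$), and comparable elements share a branch, so $x\cdot xy\in B(a)$ together with $x$. One inequality is then immediate: from $x\cdot xy\leqslant y$ and \eqref{e3} with $z:=x$ we get $xy\leqslant x(x\cdot xy)$. For the reverse inequality, apply \eqref{e-sol} to the pair $x,\ x\cdot xy$ (legitimate, as they are in the same branch) with the arbitrary element $z:=xy$; this yields $\bigl(x\cdot (x\cdot xy)\bigr)\cdot(xy)=(x\cdot xy)\cdot(x\cdot xy)$, whose right-hand side is $0$ by $(ii)$. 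Hence $x(x\cdot xy)\leqslant xy$, and antisymmetry $(iv')$ forces $x(x\cdot xy)=xy$.

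The whole argument rests on the substitution trick — feeding $z=xy$ into \eqref{e-sol} so that one side collapses via $uu=0$. The only step deserving a second look, and the main (very mild) obstacle, is checking before the second application of solidity that $x\cdot xy$ genuinely sits in the branch of $x$; this is settled by $(a)$ and the fact that comparable elements lie in one branch, or alternatively by Lemma~\ref{L-23}, since $xy\in B(0)$ by Lemma~\ref{L-21} and $B(a)B(0)=B(a)$.
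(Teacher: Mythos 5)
Your proof is correct: both applications of the solidity identity \eqref{e-sol} are legitimate (in (a) the pair $x,y$ is in a common branch by hypothesis; in (b) you first verify $x\cdot xy\in B(a)$, either via comparability with $y$ or via Lemma~\ref{L-23}, before applying \eqref{e-sol} to the pair $x,\,x\cdot xy$), and the remaining steps use only $(ii)$, \eqref{e3} and antisymmetry. The paper itself gives no proof here --- it cites the result from the literature --- and your substitution $z=xy$ into \eqref{e-sol} is exactly the standard direct argument, so there is nothing to add.
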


\begin{corollary}\label{C27}
In a solid weak BCC-algebra from $x,y\in B(a)$ it follows $x\cdot
xy,\,y\cdot yx\in B(a)$. \hfill$\Box{}$
\end{corollary}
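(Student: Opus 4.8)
The plan is to deduce this directly from Proposition~\ref{P26}(a), together with the fact recorded in Section~2 that comparable elements of a weak BCC-algebra always lie in the same branch and that the branches $B(c)$, $c\in I(G)$, are pairwise disjoint and cover $G$.

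First I would note that since $x$ and $y$ both lie in $B(a)$, they belong to the same branch, so Proposition~\ref{P26}(a) is applicable. Applying it to the pair $x,y$ gives $x\cdot xy\leqslant y$, and applying it to the pair $y,x$ (legitimate, since $B(a)$ contains both elements) gives $y\cdot yx\leqslant x$.

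Next, recall that if two elements are comparable with respect to $\leqslant$, then they lie in a common branch, and since branches are disjoint this branch is uniquely determined. From $x\cdot xy\leqslant y$ and $y\in B(a)$ we therefore obtain $x\cdot xy\in B(a)$; symmetrically, from $y\cdot yx\leqslant x$ and $x\in B(a)$ we obtain $y\cdot yx\in B(a)$. This is exactly the assertion of the corollary.

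Each step is an immediate appeal to an already established fact, so there is no genuine obstacle; the only point requiring a little care is to invoke Proposition~\ref{P26} with the two possible orderings of the arguments, which is permissible precisely because $x$ and $y$ lie in the same branch $B(a)$.
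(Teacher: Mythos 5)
Your argument is correct and is exactly the intended derivation: the paper leaves this corollary unproved precisely because it follows from Proposition~\ref{P26}$(a)$ applied to both orderings of $x,y$, together with the facts recorded in Section~2 that comparable elements lie in the same branch and that branches are pairwise disjoint. Nothing further is needed.
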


\begin{proposition}\label{P-211} {\rm (cf. \cite{BT})}
The map $\varphi(x)=0x$ is an endomorphism of each solid weak
BCC-algebra. \hfill$\Box{}$
\end{proposition}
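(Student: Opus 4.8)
The plan is to verify the single homomorphism identity $\varphi(xy)=\varphi(x)\varphi(y)$, i.e.\ $0\cdot xy=(0x)(0y)$, for all $x,y\in G$; preservation of the constant is free, since $\varphi(0)=00=0$ by $(ii)$. The first step is to extract an auxiliary identity from solidity. The pair $x,x$ trivially lies in one branch, so the solid condition applies to it with $z=y$ arbitrary, giving $(xx)y=(xy)x$; using $xx=0$ from $(ii)$ the left side collapses to $0y$, so
\[
 xy\cdot x=0y
\]
holds in every solid weak BCC-algebra. This is the only real computation and it is immediate.

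Next I substitute this into condition $(i')$ of Proposition~\ref{P-11.2}, which reads $xy\cdot zy\leqslant xz$. Taking there $x\mapsto 0$, $y\mapsto x$, $z\mapsto xy$ yields $0x\cdot(xy\cdot x)\leqslant 0\cdot xy$, and replacing $xy\cdot x$ by $0y$ via the auxiliary identity turns this into
\[
 (0x)(0y)\leqslant 0\cdot xy,
\]
that is, $\varphi(x)\varphi(y)\leqslant\varphi(xy)$. To finish I invoke minimality: $\varphi(xy)=0\cdot xy$ lies in $\varphi(G)=I(G)$, the set of minimal elements of $G$, so nothing sits strictly below it; together with $\varphi(x)\varphi(y)\leqslant\varphi(xy)$ this forces $\varphi(x)\varphi(y)=\varphi(xy)$. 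Since $x,y$ were arbitrary, $\varphi$ is an endomorphism of $G$.

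I do not expect a genuine obstacle here. The one point that must be gotten exactly right is the choice of substitution in $(i')$ so that the term $xy\cdot x$ literally appears (whence the auxiliary identity can be applied); after that the monotonicity step \eqref{e2}/\eqref{e3}-style reasoning and the minimality argument are automatic. For the record, there is a longer alternative route: using $\varphi^{3}=\varphi$ and $\varphi^{2}(xy)=\varphi^{2}(x)\varphi^{2}(y)$ from Theorem~\ref{T-fi} one reduces the claim to $a,b\in I(G)$, and then exploits that $I(G)$ is group-like; but this detour through the group structure is unnecessary, and the direct argument above is shorter.
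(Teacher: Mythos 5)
Your argument is correct. The paper itself offers no proof of Proposition~\ref{P-211} (it only cites \cite{BT}), so there is nothing internal to compare against; judged on its own, your derivation is sound and uses only facts stated in the paper. The key observation is exactly right: solidity applies to the trivially same-branch pair $x,x$ with arbitrary third argument, giving $xy\cdot x=(xx)y=0y$ for all $x,y\in G$; then $(i')$ with the substitution $x\mapsto 0$, $y\mapsto x$, $z\mapsto xy$ yields $\varphi(x)\varphi(y)=(0x)(xy\cdot x)\leqslant 0\cdot xy=\varphi(xy)$; and since $\varphi(xy)\in\varphi(G)=I(G)$ is a minimal element, the inequality collapses to the desired equality, while $\varphi(0)=0$ is immediate from $(ii)$. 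The only external input is the equality $I(G)=\varphi(G)$ quoted from \cite{DKB}; if you wanted to avoid leaning on it, you could instead prove the reverse inequality $\varphi(xy)\leqslant\varphi(x)\varphi(y)$ directly (e.g.\ via a second application of solidity and $(i')$) and finish by antisymmetry $(iv')$, but as written your minimality shortcut is legitimate and shorter.
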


\begin{proposition}\label{P-212} In each solid weak BCC-algebra
for $xy$, $xz$ $($or $xy$ and $zy )$ belonging to the same branch
we have $xy\cdot xz\leqslant zy$.
\end{proposition}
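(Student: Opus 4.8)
The plan is to obtain $xy\cdot xz\leqslant zy$ directly from axiom $(i)$ by a single use of the solidity identity $pq\cdot r=pr\cdot q$ (valid whenever $p,q$ lie in one branch and $r\in G$ is arbitrary), treating the two branch hypotheses separately.

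First I would handle the case in which $xy$ and $xz$ belong to the same branch. Taking $p=xy$, $q=xz$ and $r=zy$ in the solidity identity — legitimate precisely because $xy$ and $xz$ share a branch — gives $(xy\cdot xz)\cdot zy=(xy\cdot zy)\cdot xz$. Since $(xy\cdot zy)\cdot xz=0$ by axiom $(i)$ (equivalently $(i')$), it follows that $(xy\cdot xz)\cdot zy=0$, that is, $xy\cdot xz\leqslant zy$.

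For the parenthetical case, in which $xy$ and $zy$ lie in the same branch, I would instead apply solidity with $p=xy$, $q=zy$ and $r=xz$, obtaining $(xy\cdot zy)\cdot xz=(xy\cdot xz)\cdot zy$. The left-hand side is $0$ by axiom $(i)$, so once more $(xy\cdot xz)\cdot zy=0$, and hence $xy\cdot xz\leqslant zy$, which finishes the proof.

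There is no real obstacle here; the one point to be careful about is matching, in each case, the hypothesis ``these two elements share a branch'' to exactly the two elements occupying the first and second factor positions of the product being re-bracketed by solidity — so the solidity assumption is used in full and nothing stronger (in particular, neither Proposition~\ref{P26} nor Corollary~\ref{C27} is needed). One could alternatively route through the order relation using $(i')$ together with \eqref{e2}, but the computation above via the solidity identity is the most economical.
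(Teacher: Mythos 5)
Your proof is correct and is essentially the paper's own argument: the paper's one-line proof $(xy\cdot xz)\cdot zy=(xy\cdot zy)\cdot xz=0$ is exactly your application of the solidity identity (read in one direction or the other according to which pair, $xy,xz$ or $xy,zy$, shares a branch) followed by axiom $(i)$. Your version merely makes the two cases explicit; nothing further is needed.
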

\begin{proof} Indeed, $(xy\cdot xz)\cdot zy=(xy\cdot zy)\cdot
xz=0$.
\end{proof}

\section{Commutative solid weak BCC-algebras}

In any BCK-algebra $G$ we can define a binary operation $\wedge$
by putting
$$
x\wedge y= y\cdot yx
$$
for all $x,y\in G$. A BCK-algebra satisfying the identity
\begin{equation}\label{e5}
x\cdot xy=y\cdot yx,
\end{equation}
i.e., $y\wedge x=x\wedge y$, is called {\it commutative}. A
commutative BCK-algebra is a lower semilattice with respect to the
operation $\wedge$.

This definition cannot be extended to BCI-algebras, BCC-algebras
and weak BCC-algebras since in any weak BCC-algebra satisfying
\eqref{e5} we have $0\cdot 0x= x\cdot x0=0$, i.e.,
$\varphi^2(x)=0$ for every $x\in G$. Thus
$\varphi(x)=\varphi^3(x)=0$, by Theorem \ref{T-fi}. Hence in this
algebra $0\leqslant x$ for every $x\in G$.  This means that this
algebra is a commutative BCC-algebra. But in any BCC-algebra $G$
we have $0\leqslant yx$ for all $x,y\in G$, which together with
\eqref{e3} implies $y\cdot yx\leqslant y$. Thus a commutative
BCC-algebra satisfies the inequality
$$x\cdot xy=y\cdot yx\leqslant y.
$$
Consequently, it satisfies the identity $(x\cdot xy)y=0$, so it is
a BCK-algebra. Hence a commutative (weak) BCC-algebra is a
commutative BCK-algebra. Analogously, a commutative BCI-algebra is
a commutative BCK-algebra.

But there are weak BCC-algebras in which the condition \eqref{e5}
is satisfied only by elements belonging to the same branch.

\begin{example}\label{Ex-31}\rm A weak BCC-algebra defined by the following table
$$
\begin{array}{c|cccccc} \cdot&0&a&b&c&d\\ \hline
\rule{0pt}{12pt}0&0&0&0&c&c\\
a&a&0&0&c&c\\
b&b&a&0&d&c\\
c&c&c&c&0&0\\
d&d&c&c&a&0
\end{array}
$$
has two branches: $B(0)=\{0,a,b\}$ and $B(c)=\{c,d\}$. It is not
difficult to verify that in this weak BCC-algebra \eqref{e5} is
satisfied only by elements belonging to the same branch.
\hfill$\Box{}$
\end{example}

\begin{definition}\label{D-32}\rm A weak BCC-algebra in which \eqref{e5} is satisfied by
elements belonging to the same branch is called {\it branchwise
commutative}.
\end{definition}

\begin{theorem}{\rm (cf. \cite{Du'10})}\label{T-33} For a solid weak BCC-algebra $G$ the following conditions are
equivalent:
\begin{enumerate}
\item[$(1)$] \ $G$ is branchwise commutative,
\item[$(2)$] \ $xy=x(y\cdot yx)$ \ for $x,y$ from the same branch,
\item[$(3)$] \ $x=y\cdot yx$ \ for \ $x\leqslant y$,
\item[$(4)$] \ $x\cdot xy = y(y(x\cdot xy))$ \ for $x,y$ from the same branch,
\item[$(5)$] \ each branch of $G$ is a semilattice with respect to the operation
$x\wedge y=y\cdot yx$. \hfill$\Box{}$
\end{enumerate}
\end{theorem}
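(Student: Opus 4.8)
The plan is to establish the cycle $(1)\Rightarrow(2)\Rightarrow(3)\Rightarrow(4)\Rightarrow(1)$ together with $(3)\Rightarrow(5)\Rightarrow(1)$; each arrow is short once one auxiliary fact about the operation $x\wedge y=y\cdot yx$ has been isolated.

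For $(1)\Rightarrow(2)$, take $x,y$ in one branch: branchwise commutativity gives $y\cdot yx=x\cdot xy$, hence $x(y\cdot yx)=x(x\cdot xy)=xy$, the last equality being Proposition~\ref{P26}(b). For $(2)\Rightarrow(3)$, if $x\leqslant y$ then $x$ and $y$ lie in the same branch (comparable elements do), so $(2)$ yields $0=xy=x(y\cdot yx)$, i.e.\ $x\leqslant y\cdot yx$; since $y\cdot yx\leqslant x$ by Proposition~\ref{P26}(a) (after exchanging $x$ and $y$), antisymmetry gives $x=y\cdot yx$. For $(3)\Rightarrow(4)$, Proposition~\ref{P26}(a) gives $x\cdot xy\leqslant y$ for $x,y$ in one branch, and feeding this inequality into $(3)$, with $x\cdot xy$ playing the role of the smaller element, yields precisely $x\cdot xy=y(y(x\cdot xy))$.

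The auxiliary fact, which I would prove next, is: \emph{if $G$ is solid and satisfies $(3)$, then for $x,y$ in a branch $B(a)$ the element $y\cdot yx$ is the infimum of $\{x,y\}$ in $(B(a),\leqslant)$.} Indeed $y\cdot yx\in B(a)$ by Corollary~\ref{C27}; it is a lower bound of $x$ and $y$, since $y\cdot yx\leqslant x$ by Proposition~\ref{P26}(a), and $y\cdot yx\leqslant y$ because $yx\in B(0)$ by Lemma~\ref{L-21}, so $0\leqslant yx$ and hence $y\cdot yx\leqslant y0=y$ by \eqref{e3}; and if $w\leqslant x$ and $w\leqslant y$, then $(3)$ gives $w=y\cdot yw$, while $w\leqslant x$ and two applications of \eqref{e3} give $y\cdot yw\leqslant y\cdot yx$, whence $w\leqslant y\cdot yx$. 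Granting this, $(4)\Rightarrow(1)$ runs in two steps: first $(4)\Rightarrow(3)$, because for $x\leqslant y$ one has $x\cdot xy=x0=x$, so $(4)$ becomes $x=y(yx)=y\cdot yx$; then from $(3)$ the auxiliary fact identifies both $x\cdot xy$ and $y\cdot yx$ with $\inf\{x,y\}$, giving $x\cdot xy=y\cdot yx$, which is $(1)$. In the same way $(3)\Rightarrow(5)$: every branch $B(a)$ is closed under $\wedge$ by Corollary~\ref{C27}, and the auxiliary fact shows that $\wedge$ computes the meet in $(B(a),\leqslant)$, so $(B(a),\wedge)$ is a semilattice; finally $(5)\Rightarrow(1)$ is immediate, since commutativity of $\wedge$ is exactly the identity $y\cdot yx=x\cdot xy$ of Definition~\ref{D-32}.

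The one substantive point is the auxiliary fact, and inside it the "greatest lower bound" clause: recognizing that under $(3)$ the element $y\cdot yx$ is not merely some lower bound of $x$ and $y$ on the branch but the largest one. With $(3)$ available this is a three-line order argument using only \eqref{e3}; without $(3)$ it can fail, which is precisely why the semilattice statement $(5)$ is equivalent to branchwise commutativity rather than holding automatically. Everything else reduces to routine rewriting with Proposition~\ref{P26}, Corollary~\ref{C27}, Lemma~\ref{L-21} and the monotonicity law \eqref{e3}.
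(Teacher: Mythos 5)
Your proposal is correct. Note that the paper itself states Theorem~\ref{T-33} without proof, citing \cite{Du'10}, so there is no in-text argument to compare against; judged on its own, your cycle $(1)\Rightarrow(2)\Rightarrow(3)\Rightarrow(4)\Rightarrow(1)$ together with $(3)\Rightarrow(5)\Rightarrow(1)$ is sound, and each step uses only facts available in the paper (Proposition~\ref{P26}, Corollary~\ref{C27}, Lemma~\ref{L-21}, the monotonicity laws \eqref{e2}--\eqref{e3}, and the fact that comparable elements lie in the same branch). The one genuinely substantive ingredient, your auxiliary claim that under $(3)$ the element $y\cdot yx$ is the greatest lower bound of $x$ and $y$ within their branch, is argued correctly: the two lower-bound inequalities follow from Proposition~\ref{P26}(a) and from $0\leqslant yx$ via \eqref{e3}, and the maximality clause follows from $(3)$ applied to a common lower bound $w\leqslant y$ plus two applications of \eqref{e3}; this single fact then cleanly yields both $(4)\Rightarrow(1)$ (via $(4)\Rightarrow(3)$ and uniqueness of infima) and $(3)\Rightarrow(5)$, which is an economical way to organize the equivalences.
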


In the proof of the next theorem we will need the following
well-known result from the theory of BCK-algebras.

\begin{lemma} \label{L34} If $p$ is the greatest element of a commutative BCK-algebra $G$, then
$(G; \leqslant)$ is a distributive lattice with respect to the
operations $x\wedge y = y\cdot yx$ and $x\vee y=p(px\wedge py)$.
\hfill$\Box{}$
\end{lemma}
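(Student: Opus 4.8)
Proof plan for Lemma 34 (distributivity of a commutative BCK-algebra with greatest element).

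The plan is to verify first that $(G;\leqslant)$ is indeed a lattice, and then to establish the distributive law. For the lattice structure, the meet is already handled: in a commutative BCK-algebra $x\wedge y=y\cdot yx=x\cdot xy$ is the infimum of $x$ and $y$, by Theorem~\ref{T-33}(5) applied to the single branch $B(0)=G$ (a BCK-algebra has only one branch). For the join, I would show that $x\vee y:=p(px\wedge py)$ is the supremum of $x$ and $y$. Since $px\wedge py=py\cdot(py\cdot px)\leqslant px$, applying \eqref{e3} gives $p(px)\leqslant p(px\wedge py)$, and because $p$ is the greatest element one checks $p(px)=x$ (this is where the greatest-element hypothesis enters: $x\leqslant p$ gives $x=p\cdot px$ by Theorem~\ref{T-33}(3)). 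Hence $x\leqslant x\vee y$, and symmetrically $y\leqslant x\vee y$. For minimality, suppose $x\leqslant z$ and $y\leqslant z$; then $pz\leqslant px$ and $pz\leqslant py$ by \eqref{e3}, so $pz\leqslant px\wedge py$, and applying \eqref{e3} again together with $z=p\cdot pz$ yields $x\vee y=p(px\wedge py)\leqslant p(pz)=z$.

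Next I would prove the distributive identity $x\wedge(y\vee z)=(x\wedge y)\vee(x\wedge z)$. The standard route is to use the well-known fact that in a lattice it suffices to prove one of the two distributive laws, or equivalently the inequality $x\wedge(y\vee z)\leqslant(x\wedge y)\vee(x\wedge z)$ (the reverse inequality holds in every lattice). To get this inequality I would translate everything into the BCK-operation: expand $x\wedge(y\vee z)=(y\vee z)\cdot((y\vee z)x)$ and $(x\wedge y)\vee(x\wedge z)=p\bigl(p(x\wedge y)\wedge p(x\wedge z)\bigr)$, and reduce the required inequality to an identity among terms in $\cdot$ and $p$ that is valid in every commutative BCK-algebra. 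Here the key algebraic facts are the commutative-BCK identities $x\cdot(x\cdot(x\cdot y))=x\cdot y$, the exchange property $x\cdot(y\cdot z)=(x\cdot y)\cdot\ldots$ specialized via \eqref{e-sol} (valid here since a BCK-algebra is solid and has one branch), and monotonicity \eqref{e2}, \eqref{e3}.

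I expect the main obstacle to be the distributive law itself rather than the lattice axioms: turning $x\wedge(y\vee z)\leqslant(x\wedge y)\vee(x\wedge z)$ into a purely equational statement requires carefully chaining several applications of \eqref{e-sol} and the idempotent-type identity $x\cdot xy=x\cdot(x\cdot(x\cdot y))$, and it is easy to lose track of which elements are being compared. One clean way to organize the computation is to first prove the auxiliary identity $p\bigl((p\cdot px)\cdot y\bigr)=\ldots$ expressing $px$ in terms of the complement-like map $u\mapsto pu$, exploit that this map is an order-reversing involution on $G$ (which follows from $p\cdot(p\cdot px)=px$ since $px\leqslant p$), and then note that an order-reversing involution automatically interchanges meets and joins; distributivity of meet over join then follows from distributivity of join over meet, reducing the work by half. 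Since Lemma~\ref{L34} is cited as ``well-known,'' I would keep the write-up short, quoting the commutative-BCK identities used and indicating the reductions, rather than grinding through every bracket manipulation.
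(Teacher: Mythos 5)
The paper itself offers no proof of this lemma: it is quoted as a well-known fact from BCK-algebra theory (cf.\ Is\'eki--Tanaka) and stated with an immediate $\Box$, so there is no argument of the authors to compare yours against. Judged on its own, the first half of your plan is fine and is the standard one: $x\wedge y=y\cdot yx$ is the infimum in any commutative BCK-algebra, and your verification that $x\vee y=p(px\wedge py)$ is the supremum (using $x=p\cdot px$ for $x\leqslant p$, i.e.\ Theorem~\ref{T-33}(3), together with the monotonicity \eqref{e3}) is correct and complete in outline.

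The genuine gap is the distributive law, which is exactly the nontrivial content of the lemma, and your plan never actually proves it. You say you would ``reduce the required inequality $x\wedge(y\vee z)\leqslant(x\wedge y)\vee(x\wedge z)$ to an identity among terms in $\cdot$ and $p$ that is valid in every commutative BCK-algebra,'' but no such identity or chain of reductions is exhibited; this is precisely the step where the specific commutative-BCK identities must be deployed, and it cannot be waved through. Moreover, the organizational shortcut you propose does no work: the map $u\mapsto pu$ is indeed an order-reversing involution (since $p(pu)=u$ for all $u$), and it does interchange meets and joins, but converting distributivity of $\wedge$ over $\vee$ into distributivity of $\vee$ over $\wedge$ gains nothing, because in \emph{any} lattice each distributive law implies the other; you still have to establish one of them from scratch, and that is the part left blank. (Appealing to the result being ``well-known'' is legitimate for the paper, which cites it rather than proves it, but it does not fill the hole in a proof attempt that sets out to prove the lemma.) To close the gap you would need, e.g., the standard argument showing $p\bigl(p(x\wedge y)\wedge p(x\wedge z)\bigr)\geqslant x\wedge(y\vee z)$ by explicit manipulation with $uv\cdot w=uw\cdot v$, $u\cdot(u\cdot(u\cdot v))=u\cdot v$ and the commutativity identity \eqref{e5}, or a reference to a source where this computation is carried out.
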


\begin{theorem} \label{T35} In a solid branchwise commutative weak BCC-algebra $G$, for every $p\in G$, the set $A(p)=\{x\in G:x\leqslant p\}$ is a distributive lattice with respect to the operations $x \wedge  y = y\cdot yx$ and $x \vee_{p} y = p(px\wedge py)$.
\end{theorem}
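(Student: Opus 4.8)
The plan is to localize everything to a single branch and then transport the structure to the commutative BCK-algebra $B(0)$, where Lemma~\ref{L34} applies. First I would note that $A(p)$ is contained in one branch: if $x\leqslant p$, then $x$ and $p$ are comparable, hence lie in the branch $B(a)$, where $a\in I(G)$ is the minimal element with $a\leqslant p$ (so $a=\varphi^{2}(p)$ by Theorem~\ref{T-fi}). Moreover, since all elements of $B(0)$ lie in the one branch $B(0)$, the identity \eqref{e5} holds throughout $B(0)$; as $B(0)$ is a BCC-algebra (Lemma~\ref{L-22}), the discussion preceding Example~\ref{Ex-31} shows that $B(0)$ is in fact a commutative BCK-algebra, so BCK-facts may be used freely inside $B(0)$.

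Next I would show that $(A(p),\wedge)$ is a meet-semilattice with greatest element $p$ and least element $a$. Closure is Proposition~\ref{P26}(a): $x\wedge y=y\cdot yx\leqslant y\leqslant p$, so $x\wedge y\in A(p)$. That $\wedge$ is a semilattice operation whose induced order is $\leqslant$ is Theorem~\ref{T-33}(5); the top is $p$ because $x\wedge p=p\cdot px=x$ for $x\leqslant p$ by Theorem~\ref{T-33}(3), and the bottom is $a$ because $a\leqslant x$ for all $x\in A(p)$.

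I would then verify that $x\vee_{p}y=p(px\wedge py)$ is the join in $A(p)$. Since $x,y,p\in B(a)$, the products $px,py$ lie in $B(0)$ by Lemma~\ref{L-21}, so $px\wedge py$ is the infimum of $px$ and $py$ in $B(0)$ and $px\wedge py\leqslant px,\ py$ by Proposition~\ref{P26}(a). Applying the antitone rule \eqref{e3} and the identity $p\cdot pu=u$ for $u\leqslant p$ (Theorem~\ref{T-33}(3)) gives $x=p(px)\leqslant p(px\wedge py)=x\vee_{p}y$ and, likewise, $y\leqslant x\vee_{p}y$; also $0\leqslant px\wedge py$ gives $x\vee_{p}y\leqslant p0=p$, so $x\vee_{p}y\in A(p)$. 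If $z\in A(p)$ with $x,y\leqslant z$, then $pz\leqslant px$ and $pz\leqslant py$ by \eqref{e3}, hence $pz\leqslant px\wedge py$, and \eqref{e3} together with Theorem~\ref{T-33}(3) yields $x\vee_{p}y=p(px\wedge py)\leqslant p(pz)=z$. Thus $(A(p),\wedge,\vee_{p})$ is a lattice with the stated operations.

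It remains to prove distributivity, and this is the step I expect to be the main obstacle. The idea is that $f\colon A(p)\to B(0)$, $f(x)=px$, is injective (from $p\cdot px=x$) and order-reversing (from \eqref{e3}), with image inside $A(pa)$ (as $a\leqslant x$ gives $px\leqslant pa$ by \eqref{e3}, and $A(pa)\subseteq B(0)$ since $pa\in B(0)$); as $A(pa)$ is a subalgebra of $B(0)$ (for $u,v\leqslant pa$ one has $uv\leqslant u\leqslant pa$), it is a commutative BCK-algebra with greatest element $pa$, hence a distributive lattice by Lemma~\ref{L34}. What I must still establish is that $f$ maps $A(p)$ \emph{onto} $A(pa)$ and carries $\wedge,\vee_{p}$ to the join and meet of $A(pa)$; both reduce to the single identity $p\bigl(p(px\wedge py)\bigr)=px\wedge py$, equivalently to injectivity of $u\mapsto pu$ on $A(pa)$. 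Theorem~\ref{T-33}(3) already gives $p\cdot p(pu)=pu$, so $p(pu)$ and $u$ have the same image under $u\mapsto pu$; I would close the gap using the branch arithmetic $B(a)B(0)=B(a)$ of Lemma~\ref{L-23} and the way translations act on branches in a solid weak BCC-algebra. Granting this, $f$ is an order-reversing bijection between two lattices, so $A(p)$ is order-dual isomorphic to the distributive lattice $A(pa)$ and is therefore itself a distributive lattice with $x\wedge y=y\cdot yx$ and $x\vee_{p}y=p(px\wedge py)$. (Alternatively, once $A(p)$ is known to be a lattice, distributivity can be obtained by directly expanding $x\wedge(y\vee_{p}z)$ and $(x\wedge y)\vee_{p}(x\wedge z)$ using $u\wedge v=v\cdot vu$, $u\vee_{p}v=p(pu\wedge pv)$, solidity, and the commutative BCK identities valid in $B(0)$ — routine but lengthy.)
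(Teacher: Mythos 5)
Your first three paragraphs are sound: $A(p)\subseteq B(a)$, the meet-semilattice structure of $(A(p),\wedge)$, and the direct verification that $p(px\wedge py)$ is the least upper bound of $x,y$ inside $A(p)$ all go through (this last computation is in fact a tidier route to the lattice structure than the corresponding part (B) of the paper's proof). The problem is distributivity, and the identity you flag, $p\bigl(p(px\wedge py)\bigr)=px\wedge py$, is not a loose end you may defer to ``branch arithmetic'': it is the technical heart of the theorem. Knowing only that $f(x)=px$ is injective and order-reversing into the distributive lattice $A(pa)$ transfers nothing, since an order-embedding need not preserve meets and joins; you need $f$ to be a lattice anti-homomorphism. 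One half is indeed available: $p(x\wedge y)$ is the join of $px,py$ in $B(0)$, because for any upper bound $w\in B(0)$ one gets $pw\leqslant x\wedge y$ by \eqref{e3} and Theorem~\ref{T-33}(3), hence $p(x\wedge y)\leqslant p(pw)\leqslant w$, where $p(pw)\leqslant w$ follows from solidity ($p$ and $pw$ lie in $B(a)$ by Lemma~\ref{L-23}, so $(p\cdot pw)\cdot w=pw\cdot pw=0$). But for the other half the same solidity trick gives only $p\bigl(p(px\wedge py)\bigr)\leqslant px\wedge py$; the reverse inequality is exactly the assertion that $px\wedge py$ lies in the image $A^{p}=\{pv:v\in A(p)\}$, and neither Lemma~\ref{L-23} nor any general statement about translations of branches yields it. This is precisely what part (A) of the paper's proof establishes by the explicit computation $px\cdot py=p(p\cdot yx)$ with $p\cdot yx\in A(p)$: it shows $A^{p}$ is a subalgebra of the commutative BCK-algebra $B(0)$ with greatest element $pa$, so Lemma~\ref{L34} applies to $A^{p}$ itself, and then the correspondence you set up in your third paragraph transfers distributivity to $A(p)$.

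A secondary point: your route asks for more than is needed, namely that $f$ maps $A(p)$ \emph{onto} $A(pa)$. The paper neither proves nor uses this; it works only with the image $A^{p}\subseteq A(pa)$, and a subalgebra of $B(0)$ containing the top element $pa$ need not exhaust $A(pa)$ (in a three-element chain $0<u<v$ the set $\{0,v\}$ is a subalgebra), so surjectivity would require a separate argument of at least the same difficulty as the closure computation. The repair is to drop $A(pa)$, prove the closure identity $px\cdot py=p(p\cdot yx)$ as in the paper, and apply Lemma~\ref{L34} to $A^{p}$. Your parenthetical alternative (brute-force expansion of $x\wedge(y\vee_{p}z)$ and $(x\wedge y)\vee_{p}(x\wedge z)$) is likewise only a declaration of intent, not a proof.
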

\begin{proof} (A). We prove that $(A^{p};\leqslant )$, where
$$
A^{p} = \{px: x\in A(p)\},
$$
is a distributive lattice.

First, we show that $A^p$ is a subalgebra of $B(0)$.
 It is clear that $0=pp\in A^p$ and $A(p)\subseteq B(a)$ for some $a\in I(G)$. Thus
$A^p\subset B(0)$. Obviously $a\leqslant x$ for every $x\in A(p)$.
Consequently, $px\leqslant pa$. Hence $pa$ is the greatest element
of $A^p$.

Let $px$, $py$ be arbitrary elements of $A^p$. Then obviously
$yx\in B(0)$ and $z=p\cdot yx\in A^p$ because $zp = (p\cdot yx)p =
0$. Moreover, $zx = (p\cdot yx)x = px\cdot yx \leqslant py$, by
$(i')$. Since
$$
(px\cdot pz)\cdot zx=(px\cdot zx)\cdot pz=0,
$$
we also have $px\cdot pz\leqslant zx\leqslant py$. Therefore,
$0=(px\cdot pz)\cdot py=(px\cdot py)\cdot pz$, i.e.,
\begin{equation} \label{e6}
px\cdot py\leqslant pz.
\end{equation}

On the other hand, since a weak BCC-algebra $G$ is branchwise
commutative, for every $y\in A(p)$, according to Theorem
\ref{T-33}, we have $p\cdot py = y$. Hence $px\cdot py=(p\cdot
py)x=yx$. But $pz\cdot yx=p(p\cdot yx)\cdot yx=(p\cdot yx)(p\cdot
yx)=0$. Thus $pz\leqslant yx = px\cdot py$, which together with
\eqref{e6} gives
$$
px\cdot py=pz.
$$
Hence $A^p$ is a subalgebra of $B(0)$. Obviously, $B(0)$ as a
BCC-algebra contained in $G$ is commutative, and consequently it
is a commutative BCK-algebra. Thus $A^p$ is a commutative
BCK-algebra, too. By Lemma \ref{L34}, $(A^p;\leqslant)$ is a
distributive lattice.

\medskip

(B). Now we show that $(A(p);\leqslant)$ is a distributive
lattice. Clearly, $p$ is the greatest element of $A(p)$.

Let  $x,y \in A(p)$. Then $px,py\in A^p$ and from the fact that
$(A^{p};\leqslant)$ is a lattice it follows that there exists the
last upper bound $pz\in A^p$, i.e.,
\begin{equation}\label{e7}
px \vee_{p} py = pz.
\end{equation}
Observe that for $x,y\in A(p)$ we have
\begin{equation}\label{e8}
py\leqslant px\Longleftrightarrow x\leqslant y.
\end{equation}
Indeed, in view of \eqref{e3}, $x\leqslant y$ implies $py\leqslant
px$. Similarly, $py\leqslant px$ implies $p\cdot px\leqslant
p\cdot py$. But $G$ is branchwise commutative, hence by Theorem
\ref{T-33},  for every $v\in A(p)$ we have $p\cdot pv=v$.
Therefore $x = p\cdot px\leqslant p\cdot py=y$.

From \eqref{e7} and \eqref{e8} it follows that $z$ is the greatest
lower bound for $x$ and $y$. Hence $x \wedge y = z$. Moreover, we
have $p(x \wedge y) = pz$ and $px \vee_{p} py = pz$, which implies
\begin{equation} \label{e9}
p(x \wedge y) = px \vee_{p} py.
\end{equation}

Analogously, we can prove that for all $x,y\in A(p)$ there exists
$x\vee_p y$ and
\begin{equation} \label{e10}
p(x \vee_{p} y) = px \wedge py.
\end{equation}
Therefore $(A(p); \leqslant )$ is a lattice.

Since \eqref{e9} and \eqref{e10} are satisfied in $A^p$ and
$(A^p;\leqslant)$ is a distributive lattice, we have
$$
px \vee_{p} (py\wedge pz) = (px\vee_{p} py)\wedge (px\vee_{p} pz)
$$
for all $px,py,pz\in A^p$.

This, in view of \eqref{e9} and \eqref{e10}, gives
$$
p(x\wedge (y\vee_{p}z)) = p((x\wedge y)\vee_{p} (x\wedge z)).
$$
Consequently,
$$
p\cdot p(x\wedge (y\vee_{p} z)) = p\cdot p((x\wedge
y)\vee_{p}(x\wedge z))
$$
and
$$
x\wedge (y\vee_{p} z) = (x\wedge y)\vee_{p} (x\wedge z),
$$
because $x\wedge (y\vee_{p} z),\,(x\wedge y)\vee_{p} (x\wedge
z)\in A(p)$. This means that $(A(p);\leqslant)$ is a distributive
lattice.

In this lattice $x\vee_{p}y=p(p(x\vee_{p}y))=p(px\wedge py)$.

This completes the proof.
\end{proof}

\begin{definition} \label{D-36} \rm A weak BCC-algebra $G$ is called {\it restricted}, if every its branch has the
greatest element.
\end{definition}

The greatest element of the branch $B(a)$ will be denoted by
$1_a$. By $N_a$ we will denote the unary operation $N_a:G\to G$
defined by $N_ax=1_ax$.

\begin{lemma} \label{L-37} The main properties of the operation $N_a$ in restricted solid weak BCC-algebras are as follows:
\begin{enumerate}
\item[$(1)$] \ $N_{a}1_{a} =0$ and $N_{a}0 = 1_{a}$,
\item[$(2)$] \ $N_{a}N_{a}x \leqslant x$,
\item[$(3)$] \ $(N_{a}x)y = (N_{a}y)x$,
\item[$(4)$] \ $x \leqslant y\Longrightarrow N_{a}y\leqslant N_{a}x$,
\item[$(5)$] \ $N_{a}xN_{a}y \leqslant yx$,
\item[$(6)$] \ $N_{a}N_{a}N_{a}x = N_{a}x$,
\end{enumerate}
where $x,y\in B(a)$.\hfill$\Box{}$
\end{lemma}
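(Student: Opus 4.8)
The plan is to derive each of the six assertions by specialising, via the substitution $x\mapsto 1_a$, an axiom of Definition~\ref{D-11.1} or one of the results of Section~2, after checking in each case that the elements entering the computation lie in a common branch, so that the hypotheses ``belonging to the same branch'' are met. Throughout, $x,y\in B(a)$, and $1_a\in B(a)$ by definition of the greatest element of the branch; the facts I will lean on for the branch bookkeeping are Lemma~\ref{L-21} (which tells me $1_ax,\,1_ay\in B(0)$), Corollary~\ref{C27}, and Lemma~\ref{L-23} (which tells me $B(0)B(a)=B(a)$, so that $N_aN_a$ carries $B(a)$ back into $B(a)$).

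Items $(1)$, $(3)$, $(4)$ are immediate. For $(1)$: $N_a1_a=1_a1_a=0$ by $(ii)$ and $N_a0=1_a0=1_a$ by $(iii)$. For $(3)$: since $1_a$ and $x$ lie in the same branch $B(a)$, solidity \eqref{e-sol} applied with first variable $1_a$, second $x$, third $y$ gives $(1_ax)y=1_ax\cdot y=1_ay\cdot x=(N_ay)x$. For $(4)$: if $x\leqslant y$ then \eqref{e3} (with $z=1_a$) yields $1_ay\leqslant 1_ax$, i.e. $N_ay\leqslant N_ax$.

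For $(2)$ I would invoke Proposition~\ref{P26}$(a)$ with first variable $1_a$ and second variable $x$, both in $B(a)$, obtaining $1_a\cdot 1_ax\leqslant x$, i.e. $N_aN_ax\leqslant x$. For $(5)$, I first note that $1_ax$ and $1_ay$ both belong to $B(0)$ by Lemma~\ref{L-21}, hence lie in a common branch, so Proposition~\ref{P-212} applies with first variable $1_a$, second $x$, third $y$ and gives $1_ax\cdot 1_ay\leqslant yx$, which is exactly $N_axN_ay\leqslant yx$ (alternatively one checks $(N_ax\cdot N_ay)\cdot yx=(1_ax\cdot yx)\cdot 1_ay=0$ by solidity and $(i)$). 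For $(6)$, the economical route is Proposition~\ref{P26}$(b)$: taking its first variable to be $1_a$ and its second to be $x$, both in $B(a)$, gives $1_a\cdot(1_a\cdot 1_ax)=1_ax$, which is literally $N_aN_aN_ax=N_ax$. If one instead wishes to deduce $(6)$ from the already established items, then $(2)$ gives $N_aN_ax\leqslant x$, Lemma~\ref{L-21} and Lemma~\ref{L-23} place $N_aN_ax$ in $B(a)$, so $(4)$ yields $N_ax\leqslant N_aN_aN_ax$; the reverse inequality, however, still needs Proposition~\ref{P26}$(b)$, so calling on it from the outset is the cleanest choice.

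I expect no genuine obstacle in this proof: the content is entirely bookkeeping. The one point that demands attention is tracking, at each step, which branch each of the terms $1_a$, $x$, $y$, $1_ax$, $1_ay$, $1_a\cdot 1_ax$ inhabits --- $B(a)$ or $B(0)$ --- so that the ``same branch'' hypotheses of solidity, Proposition~\ref{P26}, and Proposition~\ref{P-212} are legitimately invoked; item $(6)$, where the argument of $N_a$ oscillates between $B(a)$ and $B(0)$, is the place where this care is most needed.
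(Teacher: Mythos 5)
Your proof is correct: the paper states Lemma~\ref{L-37} without proof, and your argument supplies exactly the intended routine verification, reading each item off from $(ii)$, $(iii)$, solidity \eqref{e-sol}, \eqref{e3}, Proposition~\ref{P26}$(a)$,$(b)$ and Proposition~\ref{P-212}, with the branch hypotheses checked via Lemma~\ref{L-21}. The only blemish is the parenthetical claim ``$B(0)B(a)=B(a)$'': Lemma~\ref{L-23} gives $B(0)B(a)=B(0a)$, which need not equal $B(a)$; what you actually need is $B(a)B(0)=B(a\cdot 0)=B(a)$ (or simply that $N_aN_ax\leqslant x$ places $N_aN_ax$ in $B(a)$, since comparable elements share a branch), and since this occurs only in your optional alternative derivation of $(6)$, it does not affect the proof.
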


\begin{definition} \label{D-38} \rm A restricted weak BCC-algebra $G$ is called {\it involutory}, if $N_aN_ax=x$
holds for every $x\in B(a)$. An element $x$ satisfying this
condition is called an {\it involution}.
\end{definition}

A simple example of involutions in restricted solid weak
BCC-algebras are $a\in I(G)$ and $1_a$.

In an involutory weak BCC-algebra the map $N_a:B(a)\to B(0)$ is
one-to-one. Thus in an involutory weak BCC-algebra with finite
$B(0)$ all branches are finite.

\begin{proposition} \label{P-39} Any branchwise commutative restricted solid weak BCC-algebra is involutory.
\end{proposition}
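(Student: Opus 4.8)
The plan is to show that for any branch $B(a)$ of a branchwise commutative restricted solid weak BCC-algebra $G$ and any $x\in B(a)$, we have $N_aN_ax=x$, i.e., $1_a(1_ax)=x$. The natural strategy is to invoke Theorem~\ref{T-33}(3): in a branchwise commutative solid weak BCC-algebra, $x\leqslant y$ implies $x=y\cdot yx$. So first I would observe that since $x\in B(a)$ and $1_a$ is the greatest element of $B(a)$, we have $x\leqslant 1_a$, hence $x$ and $1_a$ lie in the same branch and $1_a$ dominates $x$.

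Next I would apply Theorem~\ref{T-33}(3) directly with $y=1_a$: from $x\leqslant 1_a$ it follows that $x=1_a\cdot 1_ax=N_a(N_ax)$. That is essentially the whole argument once the hypotheses of Theorem~\ref{T-33} are seen to apply. One should note that $G$ being solid and branchwise commutative is exactly what lets us use the equivalence $(1)\Leftrightarrow(3)$ of Theorem~\ref{T-33}, and being restricted is what guarantees $1_a$ exists for every branch; so all four adjectives in the statement are used.

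I would also include the small sanity check that $N_ax=1_ax$ actually lands in $B(a)$ so that the outer $N_a$ makes sense: since $1_a,x\in B(a)$ and $G$ is solid, Corollary~\ref{C27} (with the roles adjusted, or simply Lemma~\ref{L-23}/Lemma~\ref{L-21} reasoning) gives $1_ax\in B(0)$, but more relevantly one checks $N_ax=1_ax\in B(a)$ is not literally needed — what is needed is only that $1_a$ and $1_ax$ lie in a common branch so that part~$(3)$ applies; and indeed $1_ax\leqslant 1_a$ since $(1_ax)1_a=(1_a 1_a)x=0x$ and $0x\leqslant\ldots$; actually it is cleaner to note $1_ax\leqslant 1_a$ follows because $1_ax\in B(0)\cup B(a)$-type considerations, or one simply applies Theorem~\ref{T-33}(3) in the form already giving the identity on all of $B(a)$.

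The main (and really only) obstacle is bookkeeping about branches: making sure the pair to which we apply the commutativity identity genuinely sits in one branch, and that $1_ax$ behaves correctly. But this is routine given Lemmas~\ref{L-21}--\ref{L-23} and Lemma~\ref{L-37}(2), which already records $N_aN_ax\leqslant x$; combined with the reverse inequality obtained from Theorem~\ref{T-33}(3) (or directly with the equality from part~$(3)$), antisymmetry $(iv')$ closes the argument. So the proof is short: cite Theorem~\ref{T-33}(3), apply it to $x\leqslant 1_a$, and conclude $N_aN_ax=x$.
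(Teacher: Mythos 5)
Your proof is correct and essentially the same as the paper's: the paper simply writes out the instance of the branchwise commutativity identity \eqref{e5} for the pair $x,1_a$, namely $N_aN_ax=1_a(1_ax)=x(x1_a)=x0=x$, which is exactly your application of Theorem~\ref{T-33}(3) to $x\leqslant 1_a$. The detour worrying about which branch $1_ax$ lies in is unnecessary, since part~(3) only requires $x\leqslant 1_a$, which holds because $1_a$ is the greatest element of the branch containing $x$.
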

\begin{proof} Indeed,  $N_{a}N_{a}x = 1_{a}(1_{a}x) = x(x1_{a}) = x0 = x$ for every $x\in B(a)$.
\end{proof}

\begin{lemma} \label{L-310} In an involutory solid weak
BCC-algebra
$$
xy=N_{a}yN_{a}x
$$
is valid for all $a\in I(G)$ and $x,y\in B(a)$.
\end{lemma}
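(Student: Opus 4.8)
The plan is to use the involutory property to express $x$ and $y$ in terms of double $N_a$-applications, then push the outer $N_a$ through the solid commutativity identity. First I would write $x = N_aN_ax$ and $y = N_aN_ay$, which holds because $G$ is involutory and $x,y \in B(a)$. By Corollary \ref{C27} and Lemma \ref{L-37}$(6)$ (together with the fact that $N_a u = 1_a u \in B(0)$ for $u \in B(a)$, so $N_a u$ lies in a branch on which $N_a$ can again act), the elements $N_ax$ and $N_ay$ sit in branches where the relevant computations make sense; in particular $N_ax \cdot N_ay \in B(0)$ by Lemma \ref{L-23} and the solidity identity applies.

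The key computation is to start from the right-hand side $N_ay \cdot N_ax$ and rewrite it. Since $N_a x = 1_a x$ and $x = N_a N_a x = 1_a(1_a x)$, I would compute
\[
N_a y \cdot N_a x = (1_a y)(1_a x) = (1_a y \cdot x0) \cdot 1_a x,
\]
and then apply the solid identity $(uv)\cdot z = (uz)\cdot v$ (valid because the relevant elements are in the same branch) to move $1_a x$ past, obtaining an expression that simplifies via $1_a \cdot 1_a x = x$ back down to $xy$. Concretely, the chain I expect is
\[
N_a y \cdot N_a x = (1_a y)(1_a x) = (1_a \cdot 1_a x) y = xy,
\]
where the middle equality is exactly the solidity condition $(1_a y)(1_a x) = (1_a (1_a x)) y$ applied to elements $1_a, y, 1_a x$ — here $1_a$ and $y$ lie in the same branch $B(a)$, so the condition is available — followed by the involutory identity $1_a(1_a x) = N_aN_ax = x$.

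The main obstacle will be verifying that all the branch membership hypotheses needed to invoke solidity are actually met: the solid condition $(uv)\cdot z = (uz)\cdot v$ requires $u$ and $v$ in the same branch, so I must check that $1_a$ and $y$ lie in $B(a)$ (immediate, since $1_a$ is the top of $B(a)$ and $y \in B(a)$ by hypothesis), and that the resulting double products stay within branches where subsequent steps apply. A secondary point is confirming $1_a(1_a x) = x$: this is precisely $N_aN_ax = x$, which holds because $G$ is involutory and $x \in B(a)$. Once these bookkeeping checks are in place, the identity $xy = N_ay \cdot N_ax$ follows from the displayed two-step chain, and the proof is complete. I would close with a brief remark that this is the branchwise analogue of the classical duality $xy = Ny\cdot Nx$ in involutory BCI/BCK-algebras.
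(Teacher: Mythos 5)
Your proof is correct and is essentially the paper's own argument run in reverse: the paper writes $xy=(N_aN_ax)y=(1_a\cdot 1_ax)y=1_ay\cdot 1_ax=N_ayN_ax$, i.e.\ the involutory identity $x=1_a\cdot 1_ax$ followed by solidity applied with $1_a,y\in B(a)$ and arbitrary $z=1_ax$, which is exactly your ``concrete'' two-step chain. The only blemish is your first displayed rewriting $(1_ay)(1_ax)=(1_ay\cdot x0)\cdot 1_ax$, which is not a valid identity and is in any case unnecessary, since the subsequent chain already completes the proof.
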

\begin{proof}
In fact, $xy=(N_{a}N_{a}x)y=(1_a\cdot 1_ax)y=1_ay\cdot 1_ax=
N_{a}yN_{a}x$.
\end{proof}

\begin{proposition}\label{P-311}
A solid weak BCC-algebra is involutory if and only if
$$
xN_ay=yN_ax
$$
holds for all $a\in I(G)$ and $x,y\in B(a)$.
\end{proposition}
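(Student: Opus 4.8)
The statement is an equivalence, and the plan is to treat the two implications separately: the ``if'' direction is short, while the ``only if'' direction carries the work. Throughout, $G$ is (implicitly) a restricted solid weak BCC-algebra, so that $1_a$ and $N_a$ make sense.

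For the ``if'' direction, suppose $xN_ay=yN_ax$ holds for all $a\in I(G)$ and all $x,y\in B(a)$. I would simply specialise at $y=1_a$. Since $N_a1_a=1_a1_a=0$ by $(ii)$ (this is also part $(1)$ of Lemma~\ref{L-37}) and $x0=x$ by $(iii)$, the left-hand side collapses to $x$, while the right-hand side is $1_aN_ax=1_a(1_ax)=N_aN_ax$. Hence $N_aN_ax=x$ for every $x\in B(a)$, i.e. $G$ is involutory in the sense of Definition~\ref{D-38}.

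For the ``only if'' direction, assume $G$ is involutory, so that $N_aN_au=u$ on $B(a)$ and, by Lemma~\ref{L-310}, $uv=N_avN_au$ for $u,v\in B(a)$. Fix $x,y\in B(a)$. First I would show that $xN_ay$ is a common lower bound of $x$ and $y$ (and, by the same computation, so is $yN_ax$): since $x,y$ lie in one branch, the exchange identity \eqref{e-sol} gives $xy\cdot N_ay=xN_ay\cdot y$, while $(i')$ with $z=1_a$ gives $xy\cdot1_ay\leqslant x1_a=0$ because $x\leqslant1_a$; as $N_ay=1_ay$, this yields $xN_ay\cdot y=0$, i.e. $xN_ay\leqslant y$. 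Also $N_ay=1_ay\in B(a)B(a)=B(0)$ by Lemma~\ref{L-23}, hence $0\leqslant N_ay$, so \eqref{e3} gives $xN_ay\leqslant x0=x$. It now suffices to prove that $xN_ay$ is the \emph{greatest} common lower bound of $x$ and $y$, because applying this with $w=yN_ax$ and then interchanging $x$ and $y$ forces $xN_ay=yN_ax$. So let $w\in B(a)$ with $w\leqslant x$ and $w\leqslant y$: by \eqref{e2}, $w\leqslant x$ gives $wN_ay\leqslant xN_ay$, and $wN_ay\leqslant w$ exactly as above, so it is enough to get $wN_ay=w$; and since $w\leqslant y$ gives $N_ay\leqslant N_aw$ by \eqref{e3}, whence $wN_aw\leqslant wN_ay$ by \eqref{e3} again, the whole matter reduces to the single identity $wN_aw=w$ for all $w\in B(a)$.

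The proof of $wN_aw=w$ --- equivalently, of the claim that $xN_ay$ is the greatest common lower bound of $x,y$ --- is where the involutory hypothesis is genuinely used, and I expect it to be the main obstacle. The difficulty is structural: $N_aw=1_aw$ always leaves the branch $B(a)$ (it lands in $B(0)$), so the tools adapted to a single branch --- Lemma~\ref{L-37}, Proposition~\ref{P26}, and the unrestricted exchange law valid inside the BCK-algebra $B(0)$ of Lemma~\ref{L-22} --- do not apply directly to a mixed product such as $w\cdot N_aw$ or $x\cdot N_ay$. The route I would take is to rewrite $w=1_a(1_aw)$ and to work with ternary products $(1_a\cdot t)\cdot s$ whose two leading factors $1_a$ and $t$ lie in a common branch (note that $t\in B(0)$ forces $1_at\in B(a)$ by Lemma~\ref{L-23}), so that \eqref{e-sol} becomes legitimate, and then to collapse the outcome back to $w$ using $1_a(1_aw)=w$ together with Lemma~\ref{L-310}. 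Once $wN_aw=w$ is in hand, the remaining steps are the routine bookkeeping with $(i')$, \eqref{e2}, \eqref{e3} and Lemma~\ref{L-310} indicated above.
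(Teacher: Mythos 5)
Your ``if'' direction is correct and is exactly the paper's argument: put $y=1_a$ and use $N_a1_a=0$, $x0=x$ to get $x=N_aN_ax$.

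The ``only if'' direction, however, rests on a claim that is false, so it cannot be completed along the route you sketch. You reduce everything to showing that $xN_ay$ is the \emph{greatest} common lower bound of $x$ and $y$, and further to the single identity $wN_aw=w$ for all $w\in B(a)$. Neither holds in general for involutory solid weak BCC-algebras. Take the three-element {\L}ukasiewicz chain $G=\{0,u,1\}$ with $xy=\max(x-y,0)$ (values $0,\tfrac12,1$): this is a bounded commutative BCK-algebra, hence a restricted solid weak BCC-algebra with one branch $B(0)=G$, $1_0=1$, $N_0x=1-x$, and it is involutory since $N_0N_0x=x$. Here $uN_0u=u\cdot u=0\neq u$, and $xN_0y=\max(x+y-1,0)$ is in general strictly below $x\wedge y$ (e.g.\ $x=y=u$ gives $xN_0y=0$ while $x\wedge y=u$), so $xN_0y$ is a common lower bound but not the greatest one. (The proposition itself is untouched: $\max(x+y-1,0)$ is symmetric, so $xN_0y=yN_0x$ there.) In effect, the property $wN_aw=w$, equivalently ``$xN_ay$ is the meet'', characterizes a much stronger, implicative-type situation, not mere involutivity; and you also leave that identity unproved, acknowledging it as the main obstacle, so the argument is incomplete even on its own terms.

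The statement should instead be proved by a direct computation of $xN_ay\cdot yN_ax$: using involutivity write $x=N_aN_ax=1_a\cdot 1_ax$, note that $y\cdot 1_ax\in B(a)B(0)=B(a)$ so the solid exchange law \eqref{e-sol} applies to give $xN_ay\cdot yN_ax=(1_a\cdot 1_ax)(y\cdot 1_ax)\cdot 1_ay$, then $(i')$ gives $(1_a\cdot 1_ax)(y\cdot 1_ax)\leqslant 1_ay$, hence by \eqref{e2} the whole product is $\leqslant 1_ay\cdot 1_ay=0$; by symmetry $yN_ax\cdot xN_ay=0$ as well, and $(iv)$ yields $xN_ay=yN_ax$. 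Your preliminary observations (that $xN_ay,\,yN_ax\in B(a)$ and that each is a lower bound of $x$ and $y$) are fine, but they do not lead to the conclusion without the false maximality claim.
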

\begin{proof}
Clearly $N_ax,N_ay\in B(0)$ for $x,y\in B(a)$. Thus $yN_ax,
xN_ay\in B(a)$, and consequently
$$
xN_ay\cdot yN_ax=(x\cdot 1_ay)(y\cdot 1_ax)=x(y\cdot 1_ax)\cdot
1_ay=(1_a\cdot 1_ax)(y\cdot 1_ax)\cdot 1_ay.
$$
Since $(1_a\cdot 1_ax)(y\cdot 1_ax)\leqslant 1_ay$, by $(i')$,
from \eqref{e2} it follows
$$
(1_a\cdot 1_ax)(y\cdot 1_ax)\cdot 1_ay\leqslant 1_ay\cdot 1_ay=0.
$$
Hence $xN_ay\cdot yN_ax=0$. Analogously we show $yN_ax\cdot
xN_ay=0$, which by $(iv)$ implies $xN_ay=yN_ax$.

On the other hand, if $xN_ay=yN_ax$ holds for all $a\in I(G)$ and
$x,y\in B(a)$, then for $y=1_a$ and arbitrary $x\in B(a)$ we have
$$
x=x\cdot 1_a1_a=xN_a1_a=1_aN_ax=N_aN_ax,
$$
which means that this weak BCC-algebra is involutory.
\end{proof}

\begin{theorem} \label{T-311} For an involutory solid weak BCC-algebra $G$ the following statements are equivalent:
\begin{enumerate}
\item[$(1)$] \ each branch of  $G$ is a lower semilattice,
\item[$(2)$] \ each branch of $G$ is a lattice.
\end{enumerate}
Moreover, if $\,(B(a);\leqslant)$ is a lattice, then

\medskip
\centerline{$x\wedge y=N_{a}(N_{a}x\vee_{a}N_{a}y)$ \ and \
$x\vee_{a}y=N_{a}(N_{a}x\wedge N_{a}y)$.}
\end{theorem}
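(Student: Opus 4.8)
The plan is to dispose of $(2)\Rightarrow(1)$ for free and to reduce $(1)\Rightarrow(2)$, together with the two displayed formulas, to a single structural observation: in an involutory solid weak BCC-algebra the map $N_ax=1_ax$ is an order-reversing map that interchanges the branches $B(a)$ and $B(0)$. The implication $(2)\Rightarrow(1)$ needs no work, since a lattice is by definition a lower semilattice.

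For $(1)\Rightarrow(2)$ I would first isolate three facts about $N_a$. By Lemma~\ref{L-23}, $N_a$ maps $B(a)$ into $B(a)B(a)=B(aa)=B(0)$ and maps $B(0)$ into $B(a)B(0)=B(a0)=B(a)$. Putting $z=1_a$ in \eqref{e3} gives $u\leqslant v\Rightarrow N_av\leqslant N_au$, so $N_a$ reverses the order in both directions. And by Definition~\ref{D-38}, $N_aN_ax=x$ whenever $x\in B(a)$. Now fix a branch $B(a)$ and $x,y\in B(a)$; then $N_ax,N_ay\in B(0)$, and by $(1)$ the meet $m=N_ax\wedge N_ay$ exists in $B(0)$. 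I would then verify that $N_am$ (which lies in $B(a)$) is the supremum of $\{x,y\}$ in $B(a)$: it is an upper bound because $m\leqslant N_ax$ and $m\leqslant N_ay$ give, after applying $N_a$ and using $N_aN_a=\mathrm{id}$ on $B(a)$, the inequalities $x\leqslant N_am$ and $y\leqslant N_am$; and it is the least such, since for any upper bound $w\in B(a)$ of $x$ and $y$ we get $N_aw\leqslant N_ax$ and $N_aw\leqslant N_ay$, hence $N_aw\leqslant m$ in $B(0)$, hence $N_am\leqslant N_aN_aw=w$. Thus $x\vee_a y=N_a(N_ax\wedge N_ay)$ exists, and since $B(a)$ is already a lower semilattice by $(1)$, each branch is a lattice; this also establishes the second displayed formula.

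The first displayed formula is then obtained by the symmetric argument. Given $x,y\in B(a)$, the elements $N_ax,N_ay$ lie in $B(0)$, which is a lattice by the part just proved, so their join $N_ax\vee_a N_ay$ is defined there; applying $N_a$ (now order-reversing from $B(0)$ to $B(a)$, and with $N_aN_a$ the identity on the elements $x,y$ of $B(a)$) one checks in the same way that $N_a(N_ax\vee_a N_ay)$ is the infimum of $\{x,y\}$ in $B(a)$, which is the claimed identity $x\wedge y=N_a(N_ax\vee_a N_ay)$.

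The part I expect to require the most care is not any individual computation but the bookkeeping with $N_a$: the supremum and infimum arguments apply $N_a$ three times in alternation between $B(a)$ and $B(0)$, and the cancellation $N_aN_a=\mathrm{id}$ is available \emph{only} on $B(a)$, so one must check at each step that it is being applied to an element that genuinely lies in $B(a)$ — which, in each use above, it does. Once the order-reversal of $N_a$, its interchanging of $B(a)$ and $B(0)$ via Lemma~\ref{L-23}, and its involutivity on $B(a)$ are kept straight, the remaining verifications are routine.
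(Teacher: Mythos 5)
Your proof is correct, and it is a genuinely leaner route than the paper's, though built on the same underlying idea of using the antitone map $N_a$ to transport the order structure between $B(0)$ and $B(a)$. The paper proves a more general intermediate statement: for every $z\in B(a)$ the element $z(N_ax\wedge N_ay)$ is the least upper bound of $zN_ax$ and $zN_ay$, and that step genuinely uses solidity (the cancellation $z(zN_av)\leqslant N_av$ is obtained from \eqref{e-sol} applied to $z$ and $zN_av$, which lie in the same branch); only afterwards is $z=1_a$ substituted and $N_aN_ax=x$ invoked. You work with $z=1_a$ from the outset and replace the solidity-based cancellation by the involution $N_aN_a=\mathrm{id}$ on $B(a)$, so your argument for $(1)\Rightarrow(2)$ needs only \eqref{e3} (order reversal of $N_a$), Lemma \ref{L-23} (that $N_a$ interchanges $B(a)$ and $B(0)$) and involutivity; likewise, where the paper obtains the displayed formulas by appealing to the argumentation of Theorem \ref{T35} for the de Morgan-type identities $N_a(x\wedge y)=N_ax\vee_a N_ay$ and $N_a(x\vee_a y)=N_ax\wedge N_ay$, you verify directly that $N_a(N_ax\wedge N_ay)$ is the least upper bound and $N_a(N_ax\vee_a N_ay)$ the greatest lower bound of $\{x,y\}$ in $B(a)$. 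What the paper's longer route buys is the extra information for arbitrary $z\in B(a)$; what yours buys is economy and generality -- it works verbatim in any involutory (restricted) weak BCC-algebra without the solidity hypothesis -- and your explicit bookkeeping of where $N_aN_a$ may be cancelled (only on elements of $B(a)$) and of the fact that all relevant bounds of elements of a branch stay in that branch is exactly the care the argument requires.
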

\begin{proof}$(1)\Longrightarrow (2)$ \ Since each branch $B(a)$ of $G$ is a lower semilattice, then $N_{a}x\wedge N_{a}y$ exists for all $x,y\in B(a)$, i.e., for all $N_ax,N_ay\in B(0)$. Hence $N_{a}x\wedge N_{a}y\leqslant N_{a}x$, which gives $zN_{a}x\leqslant z(N_{a}x\wedge  N_{a}y)$ for each $z\in B(a)$. Similarly, $zN_{a}y \leqslant z(N_{a}x \wedge  N_{a}y)$. This means that $z(N_{a}x\wedge  N_{a}y)$ is an upper bound of $zN_{a}x$ and $zN_{a}y$. Let us assume that $u\in B(a)$ is another upper bound for $zN_{a}x$ and $zN_{a}y$. From $zN_{a}x \leqslant u$ and $zN_{a}y\leqslant u$ we have $zu\leqslant z(zN_{a}x)$ and $zu\leqslant z(zN_{a}y)$. But $z(zN_av)\leqslant N_av$ for $v,z\in B(a)$, because $N_av\in B(0)$ and $zN_av\in B(a)$. Hence $zu\leqslant N_ax$ and $zu\leqslant N_ay$, which implies $zu \leqslant N_{a}x \wedge  N_{a}y$.  Thus $z(N_{a}x \wedge  N_{a}y) \leqslant z(zu)\leqslant u$ and $z(N_{a}x \wedge  N_{a}y)$ is the least upper bound of $zN_{a}x$ and $zN_{a}y$. Therefore for every $x,y,z\in B(a)$  there exists the least upper bound of $zN_{a}x$ and $zN_{a}y$, i.e., $zN_ax\vee_a zN_ay$. In particular, for every $x,y\in B(a)$ there exists
$$1_aN_ax\vee_a 1_aN_ay=N_aN_ax\vee_a N_aN_ay=x\vee_a y.
$$
This shows that $(B(a); \leqslant)$ is an upper semilattice.
Consequently, $B(a)$ is a lattice.

$(2)\Longrightarrow (1)$ \ Obvious.

Since $(B(a);\leqslant)$ is a lattice for every $a\in I(G)$, using
the same argumentation as in the second part of the proof of
Theorem \ref{T35} we can show that $N_a(x\wedge y)=N_ax\vee_{a} N_ay$
for $x,y\in B(a)$. Thus,
$$ x\wedge y=N_{a}N_{a}(x\wedge y)=N_{a}(N_{a}x\vee_{a}N_{a}y).
$$
Analogously, $N_a(x\vee_a y)=N_ax\wedge N_ay$ implies
$$
x\vee_{a}y=N_{a}N_{a}(x\vee_{a}y)= N_{a}(N_{a}x\wedge N_{a}y).
$$
This completes the proof.
\end{proof}

\section{n-fold branchwise commutative weak BCC-algebras}

In a weak BCC-algebra $G$ for all $x,y\in G$ we put $xy^{0}=x$ and
$xy^{n+1} = (xy^{n})y$ for any non-negative integer $n$.

\begin{definition} \label{D-41} \rm A weak BCC-algebra G is called {\it $n$-fold branchwise commutative} (shortly: {\it $n$-b commutative}), if there exists a natural number $n$ such that
\begin{equation}\label{n-b}
xy = x(y\cdot yx^{n})
\end{equation}
holds for $x,y$ belonging to the same branch.
\end{definition}

From Theorem \ref{T-33} it follows that for $n=1$ it is an
ordinary branchwise commutative weak BCC-algebra.

\begin{theorem} \label{T-42} For a solid weak BCC-algebra $G$ the following conditions are equivalent:
\begin{enumerate}
\item[$(a)$] \ $G$ is $n$-b commutative,
\item[$(b)$] \ $x\cdot xy\leqslant y\cdot yx^{n}$ \ for $x,y$ belonging to the same branch,
\item[$(c)$] \ $x\leqslant y\Longrightarrow x\leqslant y\cdot yx^{n}$.
\end{enumerate}
\end{theorem}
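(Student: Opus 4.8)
The plan is to prove the chain of implications $(a)\Rightarrow(b)\Rightarrow(c)\Rightarrow(a)$, imitating the structure of the proof of Theorem~\ref{T-33} but tracking the extra exponent $n$ on the innermost term. Throughout I work inside a fixed branch $B(a)$; the key technical facts I will lean on are Proposition~\ref{P26} (so $x\cdot xy\leqslant y$ and $x(x\cdot xy)=xy$ for $x,y$ in the same branch), Corollary~\ref{C27} (so iterated expressions like $x\cdot xy$ stay in $B(a)$), and solidity itself, i.e. $uv\cdot w=uw\cdot v$ whenever $u,v$ lie in a common branch. I will also use freely that $u\leqslant v$ implies $uw\leqslant vw$ and $wv\leqslant wu$ (equations \eqref{e2} and \eqref{e3}).

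For $(a)\Rightarrow(b)$: assume $xy=x(y\cdot yx^{n})$ for $x,y\in B(a)$. By Proposition~\ref{P26}$(a)$ applied to the pair $x,y$ we have $x\cdot xy\leqslant y$; I want to ``multiply through'' by the correcting term. The idea is that from $xy=x(y\cdot yx^{n})$ and Proposition~\ref{P26}$(a)$ applied to the (same-branch) pair $x$ and $y\cdot yx^n$, one gets $x\cdot xy = x\cdot x(y\cdot yx^n)\leqslant y\cdot yx^n$, which is exactly $(b)$. The only thing to check carefully is that $y\cdot yx^n\in B(a)$ so that Proposition~\ref{P26} genuinely applies; this follows by repeated use of Corollary~\ref{C27} (or Lemma~\ref{L-23}), since $yx\in B(0)$, hence $yx^n\in B(0)$, hence $y\cdot yx^n\in B(a)$.

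For $(b)\Rightarrow(c)$: this is the easy step. If $x\leqslant y$ then $x$ and $y$ are in the same branch, and $xy=0$, so $x\cdot xy = x0 = x$ by $(iii)$. Substituting into $(b)$ gives $x\leqslant y\cdot yx^{n}$, which is $(c)$.

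For $(c)\Rightarrow(a)$, which I expect to be the main obstacle: fix $x,y\in B(a)$. Set $w = x\cdot xy$; by Proposition~\ref{P26}$(a)$ we have $w\leqslant y$, and $w\in B(a)$ by Corollary~\ref{C27}. Applying $(c)$ to $w\leqslant y$ yields $w\leqslant y\cdot yw^{n}$. Now $w=x\cdot xy$, and I will need to simplify $yw^n = y(x\cdot xy)^n$; using solidity and Proposition~\ref{P26}$(b)$ (which gives $x(x\cdot xy)=xy$) I expect $y\cdot y(x\cdot xy)^n$ to collapse — the real computational heart of the argument is showing this reduces to (or is bounded by) something of the form $y\cdot yx^n$, and then that $x(y\cdot yx^n)\leqslant x\cdot xy = w$. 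Combined with the reverse inequality $x\cdot xy\leqslant x(y\cdot yx^{n})$ — which should come from \eqref{e3} applied to $yx^n\leqslant y$, giving $x\cdot xy\leqslant xy\leqslant \dots$, or more directly from monotonicity since $y\cdot yx^n\leqslant y$ forces $x(y\cdot yx^n)\geqslant xy$, wait — so some care is needed about the direction of the inequalities here, and I would establish $x(y\cdot yx^n)\leqslant x\cdot xy$ and $x\cdot xy\leqslant x(y\cdot yx^n)$ separately and then invoke antisymmetry $(iv')$. The delicate point throughout is keeping all intermediate products within a single branch so that solidity and Proposition~\ref{P26} remain applicable; once that bookkeeping is in place, each individual rewriting is a one-line application of solidity or of Proposition~\ref{P26}.
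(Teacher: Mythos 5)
Your steps $(a)\Rightarrow(b)$ and $(b)\Rightarrow(c)$ are essentially sound, but the branch bookkeeping you offer in $(a)\Rightarrow(b)$ is wrong: from $yx\in B(0)$ you cannot conclude $yx^{n}\in B(0)$ by iterating Corollary~\ref{C27} or Lemma~\ref{L-23}, because multiplying again by $x\in B(a)$ lands you in $B(0a)$, and $0a\ne 0$ whenever $a\ne 0$; concretely, in Example~\ref{Ex-31} we have $c,d\in B(c)$ but $dc^{2}=(dc)c=ac=c\notin B(0)$. The membership you actually need, $y\cdot yx^{n}\in B(a)$, must instead be extracted from hypothesis $(a)$ itself: $x(y\cdot yx^{n})=xy\in B(0)$, so $x$ and $y\cdot yx^{n}$ lie in the same branch by Lemma~\ref{L-21}. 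This is exactly how the paper argues, and with that repair your application of Proposition~\ref{P26}$(a)$ to the pair $x,\,y\cdot yx^{n}$ does give $(b)$.

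The genuine gap is $(c)\Rightarrow(a)$, which you leave as a sketch and in which the inequalities you gesture at are partly false or aimed at the wrong target. The goal is $xy=x(y\cdot yx^{n})$, i.e.\ the two inequalities $x(y\cdot yx^{n})\leqslant xy$ and $xy\leqslant x(y\cdot yx^{n})$; your proposed comparison $x(y\cdot yx^{n})\leqslant x\cdot xy$ is not the one needed, and your auxiliary claim $yx^{n}\leqslant y$ is generally false for $x\notin B(0)$, since already $yx\cdot y=yy\cdot x=0x\ne 0$ when $x\in B(a)$ with $a\ne 0$. What the paper does, and what is missing from your proposal, is: (i) an induction establishing $yx^{k}\leqslant y(x\cdot xy)^{k}$ for every $k$, whence $x(y\cdot yx^{n})\leqslant x(y\cdot y(x\cdot xy)^{n})$; (ii) applying $(c)$ to $x\cdot xy\leqslant y$ and then \eqref{e3} together with Proposition~\ref{P26}$(b)$ to get $x(y\cdot y(x\cdot xy)^{n})\leqslant x(x\cdot xy)=xy$, so that $x(y\cdot yx^{n})\leqslant xy$; (iii) from this, $y\cdot yx^{n}\in B(a)$, and then a separate minimality argument via Lemma~\ref{L-23} (if $yx^{n}\in B(b)$ then $y\cdot yx^{n}\in B(ab)$, forcing $ab=a$ and hence $b=0$) showing $yx^{n}\in B(0)$, whence $(y\cdot yx^{n})y=yy\cdot yx^{n}=0\cdot yx^{n}=0$, so $y\cdot yx^{n}\leqslant y$ and $xy\leqslant x(y\cdot yx^{n})$ by \eqref{e3}; (iv) antisymmetry. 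Steps (i) and (iii) are precisely the computational and branch-theoretic heart of the theorem, and neither appears in your proposal, so as it stands the proof of $(c)\Rightarrow(a)$ is incomplete.
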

\begin{proof} $(a)\Longrightarrow (b)$ \ Let $x,y\in B(a)$ for some $a\in B(a)$. Then $xy\in B(0)$ and consequently
$x(y\cdot yx^n)\in B(0)$. So, $x$ and $y\cdot yx^n$ are in the
same branch. Hence $y\cdot yx^n\in B(a)$ and
$$
(x\cdot xy)(y\cdot yx^n)=x(y\cdot yx^n)\cdot xy=xy\cdot xy=0.
$$
Therefore $x\cdot xy\leqslant y\cdot yx^{n}$.

$(b)\Longrightarrow (c)$ \ Obvious.

$(c)\Longrightarrow (a)$ \ Since $0\leqslant xy$ for $x,y\in
B(a)$, we have
\begin{equation}\label{nn}
x\cdot xy\leqslant x.
\end{equation}
Consequently, $yx\leqslant y(x\cdot xy)$. This implies
$$
yx\cdot x\leqslant y(x\cdot xy)\cdot x=yx\cdot (x\cdot xy)
$$
and
$$
yx\cdot (x\cdot xy)\leqslant y(x\cdot xy)\cdot (x\cdot xy).
$$
Therefore
$$
yx^{2}=yx\cdot x\leqslant y(x\cdot xy)\cdot (x\cdot xy)=y(x\cdot
xy)^{2},
$$
i.e.,
$$
yx^2\leqslant y(x\cdot xy)^2.
$$
From this inequality we obtain
\begin{equation}\label{nnn}
yx^2\cdot x\leqslant y(x\cdot xy)^2\cdot x.
\end{equation}
From \eqref{nn} we get
$$
y(x\cdot xy)^2\cdot x\leqslant y(x\cdot xy)^2\cdot (x\cdot xy),
$$
which together with \eqref{nnn} gives
$$
yx^3\leqslant y(x\cdot xy)^3.
$$
Repeating the above procedure we can see that
$$
yx^{n}\leqslant y(x\cdot xy)^{n}
$$
holds for every natural $n$. Hence
\begin{equation} \label{nnnn}
x(y\cdot yx^{n})\leqslant x(y\cdot y(x\cdot xy)^{n}).
\end{equation}

Obviously $x\cdot xy\leqslant y$ for $x,y$ belonging to the same
branch. Applying $(c)$ to the last inequality we obtain $x\cdot
xy\leqslant y\cdot y(x\cdot xy)^n$. Hence, by \eqref{e3} and
Proposition \ref{P26} $(b)$, we conclude $x(y\cdot y(x\cdot
xy)^{n})\leqslant x(x\cdot xy) = xy$. Consequently,
\begin{equation} \label{mm}
x(y\cdot y(x\cdot xy)^{n})\leqslant xy.
\end{equation}

Combining \eqref{nnnn} and \eqref{mm} we get
\begin{equation} \label{aaa}
x(y\cdot yx^{n})\leqslant xy.
\end{equation}
Thus $x(y\cdot yx^{n})\in B(0)$. This means that $y\cdot yx^{n}\in
B(a)$. But in this case $yx^n\in B(0)$. Indeed, if $yx^{n}\in
B(b)$ for some $b\in I(G)$, then $y\cdot yx^n\in B(a)B(b)=B(ab)$.
So, $y\cdot yx^n\in B(a)\cap B(ab)$. Thus $B(a) = B(ab)$, i.e., $a
= ab$. Hence $0=ab\cdot a=aa\cdot b=0b$. Therefore $b\in B(0)$ and
$b = 0$ because $b\in I(G)$.

This means that
$$
(y\cdot yx^n)y = 0\cdot yx^n=0.
$$
Consequently, $y\cdot yx^{n}\leqslant y$, which, by \eqref{e3},
implies $xy\leqslant x(y\cdot yx^{n})$.

Comparing the last inequality with \eqref{aaa} we obtain $xy =
x(y\cdot yx^{n})$. This completes the proof.
\end{proof}

\section{Implicative solid weak BCC-algebras}

Implicative and positive implicative BCC-algebras are originating
from the systems of positive implicational calculus and weak
positive implicational calculus in the implicational functor in
logical systems. In this section we will also deal with some
generalized implicative and positive implicative solid weak
BCC-algebras.

\begin{definition} \label{D-51} \rm A weak BCC-algebra $G$ is called {\it branchwise implicative}, if
$$
x\cdot yx=x
$$
holds for all $x,y$ belonging to the same branch of $G$.
\end{definition}

\begin{theorem} \label{T-52}{\rm (\cite{Du'10}, Theorem 3.8)} Any solid branchwise implicative weak BCC-algebra
is branchwise commutative. \hfill$\Box{}$
\end{theorem}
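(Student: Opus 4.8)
The plan is to verify condition $(2)$ of Theorem \ref{T-33} --- the identity $xy = x(y\cdot yx)$ for $x,y$ belonging to a common branch --- after which Theorem \ref{T-33} immediately yields branchwise commutativity; this final invocation is in fact the only place where the solidity hypothesis is genuinely used.

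So I would fix $a\in I(G)$ and $x,y\in B(a)$. For one inequality I would use the defining identity of a branchwise implicative algebra, $x\cdot yx = x$, to rewrite $x(y\cdot yx)$ as $(x\cdot yx)(y\cdot yx)$, and then apply $(i')$ with $y$ replaced by $yx$ (i.e.\ with the triple $(x,yx,y)$ in place of $(x,y,z)$) to obtain $x(y\cdot yx)\leqslant xy$. For the reverse inequality I would exploit that $x,y$ lie in the same branch: by Lemma \ref{L-21} we have $yx\in B(0)$, hence $0\leqslant yx$, and then \eqref{e3} applied twice gives first $y\cdot yx\leqslant y\cdot 0 = y$ and then $xy\leqslant x(y\cdot yx)$. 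Antisymmetry $(iv')$ now yields $xy = x(y\cdot yx)$, and Theorem \ref{T-33} completes the proof.

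The computation is short and essentially mechanical; the only points requiring a little care are performing the substitution $x\cdot yx=x$ in the \emph{left} factor of $x(y\cdot yx)$ so that $(i')$ becomes applicable, and checking at each stage that the elements actually involved do lie in a common branch, so that the branchwise implicative identity and Lemma \ref{L-21} may legitimately be invoked. I do not expect any serious obstacle. (One could equally well aim at condition $(3)$ of Theorem \ref{T-33}: if $x\leqslant y$ then $xy=0$, so the identity just proved gives $x(y\cdot yx)=0$, i.e.\ $x\leqslant y\cdot yx$, while $y\cdot yx\leqslant x$ holds in any solid weak BCC-algebra by Proposition \ref{P26}$(a)$; antisymmetry then gives $x=y\cdot yx$.)
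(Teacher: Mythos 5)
Your argument is correct. The paper itself gives no proof of Theorem \ref{T-52} (it is quoted from \cite{Du'10}), but a direct derivation of the same implication is embedded at the start of the proof of Theorem \ref{T-511}: there one computes $(x\cdot xy)\cdot(y\cdot yx)=((x\cdot yx)\cdot xy)\cdot(y\cdot yx)=(x\cdot yx)(y\cdot yx)\cdot xy=0$, using the implicative identity $x\cdot yx=x$, the solid interchange \eqref{e-sol} (legitimate because $x\cdot yx=x$ and $y\cdot yx$ both lie in $B(a)$), and axiom $(i)$, and then concludes $x\cdot xy=y\cdot yx$ by symmetry, which is branchwise commutativity in the form \eqref{e5}. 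Your route is genuinely different: you verify condition $(2)$ of Theorem \ref{T-33} instead, getting $x(y\cdot yx)=(x\cdot yx)(y\cdot yx)\leqslant xy$ from implicativity and $(i')$ alone, and the reverse inequality $xy\leqslant x(y\cdot yx)$ from $yx\in B(0)$ (Lemma \ref{L-21}) and two applications of \eqref{e3}; both steps hold in an arbitrary weak BCC-algebra, so solidity is indeed used only when invoking the equivalence $(2)\Rightarrow(1)$ of Theorem \ref{T-33}. Each approach has its merits: the direct computation is self-contained apart from routine branch bookkeeping, while yours is shorter and isolates exactly where solidity matters, at the cost of resting on the cited Theorem \ref{T-33}. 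Your parenthetical variant via condition $(3)$ and Proposition \ref{P26}$(a)$ is also sound.
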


\begin{theorem} \label{T-53} In a solid branchwise implicative weak BCC-algebra the equation
$$
xy\cdot 0y=(xy\cdot 0y)y\cdot 0y
$$
is satisfied by all elements belonging to the same branch.
\end{theorem}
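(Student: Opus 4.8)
The plan is to work inside a single branch and to reduce the claimed equation to the branchwise implicative identity $x\cdot yx=x$ together with standard manipulations available in solid weak BCC-algebras. First I would fix $x,y$ belonging to the same branch $B(a)$, so that $xy\in B(0)$; hence $0y\in B(0)$ as well (both $x$ and $y$ sit above $a$, so $0y$ and $xy$ lie in the branch $B(\varphi(a))$, and since $a\in I(G)$ one checks $\varphi(a)=0$ exactly as in the last part of the proof of Theorem~\ref{T-42}), and therefore $xy\cdot 0y\in B(0)$ too. Writing $u=xy\cdot 0y$, the goal becomes $u=(uy)\cdot 0y$ with $u\in B(0)$, and I would like to recognize $u$ as an element of the form "$v\cdot 0y$" so that the right-hand side collapses.

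The key computation is to show $u=xy\cdot 0y$ satisfies $u\cdot y = (xy)\cdot y^{?}$ in a controlled way, and then apply $(iii)$ and solidity. Concretely: by Proposition~\ref{P-212} (applied to elements of the same branch) and the solid rearrangement \eqref{e-sol}, $xy\cdot 0y = (x\cdot 0y)\cdot y$ is \emph{not} generally available since $0y$ and $y$ need not share a branch with $x$; instead I would use the branchwise implicative hypothesis in the form $x\cdot yx=x$, rewritten via Theorem~\ref{T-52} (branchwise commutativity) and Theorem~\ref{T-33}(3), namely $x=y\cdot yx$ whenever $x\leqslant y$. The natural move is: since $xy\cdot 0y\leqslant xy$ (because $xy\cdot 0y\in B(0)$ and $0y\in B(0)$, so $(xy\cdot 0y)\cdot xy = (xy\cdot xy)\cdot 0y = 0$ by solidity), branchwise commutativity gives $xy = (xy)\big((xy)(xy\cdot 0y)\big)$ by Theorem~\ref{T-33}(2), which lets me express $xy\cdot 0y$ through repeated multiplication by $0y$.

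The heart of the argument is the identity $a\cdot 0y=(a\cdot 0y)y\cdot 0y$ for $a\in B(0)$: once this is established with $a=xy$ it is exactly the claim. To prove it I would set $b=a\cdot 0y\in B(0)$ and show $by\cdot 0y=b$. Since $b\leqslant a$ and $b\in B(0)$, and $0y\in B(0)$, solidity yields $by\cdot 0y=(b\cdot 0y)y$; and $b\cdot 0y=(a\cdot 0y)\cdot 0y$, at which point I would invoke branchwise implicativity in the sharpened form coming from Proposition~\ref{P26}(b) and Theorem~\ref{T-52}, i.e. $x(x\cdot xy)=xy$ inside a branch, taken with the roles of the variables chosen so that the inner term is $0y$. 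The main obstacle I anticipate is bookkeeping of branches: one must repeatedly certify that the compound terms $xy\cdot 0y$, $(xy\cdot 0y)y$, $0y$, etc., all lie in $B(0)$ (or in a common branch) before any application of \eqref{e-sol}, of $(i')$–$(iv')$, or of the branchwise identities is legitimate. Lemma~\ref{L-21}, Lemma~\ref{L-23}, and the identity $\varphi(a)=0$ for $a\in I(G)$ are the tools that make all these membership checks go through, and once the branch membership is secured the equation follows by a short chain of solid rearrangements and one application of the branchwise implicative law.
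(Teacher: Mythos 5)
Your proposal breaks down at the very first branch bookkeeping step, and the error is not cosmetic. For $x,y\in B(a)$ with $a\in I(G)$ you claim $\varphi(a)=0$, hence $0y\in B(0)$ and $xy\cdot 0y\in B(0)$. This is false unless $a=0$: for a minimal element $a$, $\varphi(a)=0a=0$ would mean $0\leqslant a$, i.e.\ $a\in B(0)$, forcing $a=0$ because branches are disjoint. (In Example \ref{Ex-31}, $0c=c\neq 0$ and for $y=d\in B(c)$ one has $0y=c\notin B(0)$.) The argument in the last part of the proof of Theorem \ref{T-42} that you cite shows a particular element $b$ must equal $0$ under specific hypotheses; it does not give $\varphi(a)=0$ for minimal $a$. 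Consequently everything downstream collapses for $a\neq 0$: the computation $(xy\cdot 0y)\cdot xy=(xy\cdot xy)\cdot 0y$ is an illegitimate use of solidity, since solidity requires $xy$ and $0y$ to lie in the same branch, and they do not ($xy\in B(0)$, $0y=\varphi(a)\in B(0a)$); likewise the asserted inequality $xy\cdot 0y\leqslant xy$ is false in general, because in fact $xy\cdot 0y\in B(0)B(0a)=B(\varphi^2(a))=B(a)$, not in $B(0)$. The same defect invalidates the proposed reduction to ``$u\cdot 0y=(u\cdot 0y)y\cdot 0y$ for $u\in B(0)$'' and the step $by\cdot 0y=(b\cdot 0y)y$, which again needs $b$ and $y$ in one branch. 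In effect your sketch could at best handle the branch $B(0)$, where the statement is the known BCC/BCK case, and it misses precisely the situation the theorem is about; the final appeal to Proposition \ref{P26}(b) ``with the roles of the variables chosen so that the inner term is $0y$'' is left unspecified and cannot be completed as stated.

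The missing idea is the one the paper uses: from axiom $(i)$ with $z=0$ one gets $(xy\cdot 0y)\cdot x=0$, so $xy\cdot 0y\leqslant x$ and hence $xy\cdot 0y\in B(a)$ — in the \emph{same} branch as $x$, not in $B(0)$. This membership does two jobs at once. First, it licenses the solid rearrangements and, via Theorem \ref{T-52}, the branchwise commutativity needed to transform $(xy\cdot 0y)\cdot x(xy\cdot 0y)$ step by step into $(xy\cdot 0y)y\cdot 0y$, using $(xy\cdot 0y)x=0$ to kill the inner term. Second, it lets you apply the branchwise implicative law directly to the pair $xy\cdot 0y$, $x$ to evaluate the same expression as $xy\cdot 0y$. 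Comparing the two evaluations yields the desired equation. Without the observation $(xy\cdot 0y)\cdot x=0$ and the correct identification of the branch of $xy\cdot 0y$, there is no legitimate way to invoke either solidity or implicativity where your outline needs them, so the gap is essential rather than a matter of missing routine checks.
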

\begin{proof} Let $G$ be branchwise implicative and solid. Then, according to $(i)$, for all $x,y\in B(a)$,
we have
\begin{equation} \label{2.5.1}
(xy\cdot 0y)x = 0.
\end{equation}
So, $xy\cdot 0y\in B(a)$ and $xy,\,x(xy\cdot 0y)\in B(0)$.
Therefore,
$$
\rule{2mm}{0mm}\arraycolsep=.5mm\begin{array}{rl}
 (xy\cdot 0y)\cdot x(xy\cdot 0y)&=(xy\cdot x(xy\cdot 0y))\cdot 0y\\[3mm]
 &=(x\cdot x(xy\cdot 0y))y\cdot 0y\\[3mm]
&= ((xy\cdot 0y)\cdot (xy\cdot 0y)x)y\cdot 0y \rule{14mm}{0mm}
{\rm by \ Theorem \ \ref{T-52}}\\[3mm]
&= ((xy\cdot 0y)0)y\cdot 0y \hfill {\rm by \ \eqref{2.5.1}}\\[3mm]
&= (xy\cdot 0y)y\cdot 0y.
\end{array}
$$
Hence
$$
(xy\cdot 0y)\cdot x(xy\cdot 0y)=(xy\cdot 0y)y\cdot 0y.
$$

Since $xy\cdot 0y$ and $x$ are in the same branch, the
implicativity shows that
$$
(xy\cdot 0y)\cdot x(xy\cdot 0y) = xy\cdot 0y,
$$
which together with the previous equation gives \ $xy\cdot
0y=(xy\cdot 0y)y\cdot 0y$.
\end{proof}

\begin{lemma}\label{L-54} In a solid weak BCC-algebra for $x,y$ belonging to the same branch holds
\begin{displaymath}
(xy\cdot 0y)y\cdot 0y \leqslant ((xy\cdot y)\cdot 0y)\cdot 0y.
\end{displaymath}
\end{lemma}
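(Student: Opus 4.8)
The plan is to reduce the inequality to the axiom $(i')$ by a sequence of monotonicity steps, using only properties available in a solid weak BCC-algebra. First I would observe that since $x,y$ lie in the same branch $B(a)$, Lemma~\ref{L-21} gives $xy\in B(0)$, and hence $xy\cdot 0y$, $(xy\cdot y)\cdot 0y$ and all the iterated expressions stay in controlled branches; in particular $xy$ and $x$ are in the same branch as $y$, and $0y\in I(G)$. The key combinatorial fact to exploit is the solidity identity $uv\cdot w=uw\cdot v$ for $u,v$ in the same branch, which lets me permute the trailing factors $y$ and $0y$.

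The central step is to compare $xy\cdot y$ with $(xy\cdot 0y)$ after suitable right-multiplications. From $(i')$ we always have $xy\cdot 0y\leqslant x0=x$ (taking $z=0$), and more usefully $(xy\cdot y)\cdot (0y) = (xy\cdot 0y)\cdot y$ whenever the solidity hypothesis applies to the pair involved; combined with $0\leqslant y$ and hence $0y\leqslant yy=0$, wait --- rather, I would use that $0y$ is minimal, so repeated multiplication by $0y$ and by $y$ can be reordered freely inside the same branch via \eqref{e-sol}. Concretely I expect to show
\[
(xy\cdot 0y)y\cdot 0y = ((xy\cdot 0y)\cdot 0y)\cdot y \leqslant ((xy\cdot y)\cdot 0y)\cdot 0y,
\]
where the equality is one application of solidity (permuting $y$ and $0y$ past each other, both attached to $xy\cdot 0y$, which lies in $B(a)$), and the inequality comes from $xy\cdot 0y\leqslant xy$ --- no: one needs $xy\cdot 0y \leqslant (xy\cdot y)\cdot$ something. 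So instead I would derive $xy\cdot 0y\le xy$ is false in general; the right move is $xy \le (xy\cdot y)\cdot(\text{adjust})$ is also false. The correct anchor is \eqref{e2}: if $u\le v$ then $uw\le vw$, applied twice, reducing everything to a single inequality $xy\cdot 0y \le (xy\cdot y)\cdot 0y$, i.e.\ after cancelling the outer $\cdot 0y$ and one $\cdot y$ via solidity, to $xy\cdot 0y \le xy\cdot y$; but $0\le y$ gives $xy\cdot y\le xy\cdot 0 = xy$, which is the wrong direction. Hence the genuine content must instead flow from $(i')$ in the form $(xy\cdot y)\cdot(xy) \le \cdots$; the clean derivation is: by $(i')$, $(xy\cdot 0y)\cdot(y\cdot 0y)\le xy\cdot y$, and since $y\cdot 0y\in B(0)$ one rearranges the left side by solidity to $((xy\cdot 0y)\cdot 0y)\cdot$ wait, $y\cdot 0y$ is a single element, not a trailing pair.

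Thus the plan I would actually commit to: apply $(i')$ with the triple $(xy\cdot 0y,\ y,\ 0y)$ to get $(xy\cdot 0y\cdot 0y)\cdot(y\cdot 0y)\le (xy\cdot 0y)\cdot y$ --- no. Let me state the plan abstractly instead. I would (1) record that all the displayed terms lie in $B(a)$ or $B(0)$ by Lemmas~\ref{L-21}--\ref{L-23}; (2) use solidity \eqref{e-sol} to rewrite $(xy\cdot 0y)y\cdot 0y$ as $(xy\cdot 0y\cdot 0y)\cdot y$ and $((xy\cdot y)\cdot 0y)\cdot 0y$ as $(xy\cdot y\cdot 0y)\cdot 0y$; (3) apply $(i')$ to the triple $\bigl(xy,\,y,\,0\bigr)$, giving $xy\cdot y = xy\cdot y \le x\cdot 0$, unhelpful, so rather to the triple $\bigl(xy\cdot y,\, y,\, 0y\bigr)$ together with $0y\le y$, yielding the chain; and (4) combine with $(i')$ in the form $(xy\cdot 0y)\cdot w \le (xy\cdot y)\cdot w'$ obtained from $0\le y\Rightarrow 0y\le yy=0\Rightarrow xy\cdot 0y\ge xy\cdot 0=xy\ge xy\cdot y$. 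This last chain $xy\cdot y\le xy = xy\cdot 0\le xy\cdot 0y$ is the actual engine, using \eqref{e3} with $0\le y$; then right-multiplying by $0y$ and by $y$ and reordering via solidity inside $B(a)$ produces exactly $((xy\cdot y)\cdot 0y)\cdot 0y \ge (xy\cdot 0y)\cdot$(the two factors), and the direction works out because $xy\cdot y$ is the \emph{smaller} term being enlarged.

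The main obstacle I anticipate is bookkeeping: ensuring that every application of solidity \eqref{e-sol} is legitimate, i.e.\ that the two elements being transposed genuinely lie in a common branch (the hypothesis of the \emph{solid} axiom), and that the monotonicity steps \eqref{e2}, \eqref{e3} are applied in the correct direction --- the inequality in the lemma is subtle precisely because naive cancellation would suggest the reverse. I would therefore carry out the proof by writing the target as $\mathrm{LHS}\le\mathrm{RHS}$, compute $\mathrm{RHS}$ via two solidity transpositions into the form $(xy\cdot y\cdot 0y)\cdot 0y$, note $xy\cdot y\cdot 0y = xy\cdot 0y\cdot y$ (solidity again), so $\mathrm{RHS}=(xy\cdot 0y\cdot y)\cdot 0y = (xy\cdot 0y\cdot 0y)\cdot y=\mathrm{LHS}$ after one more transposition --- which would make it an \emph{equality}. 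If that collapse occurs, the lemma is simply an identity dressed as an inequality, and the honest proof is: all four trailing-factor arrangements of $xy$ with two $y$'s replaced appropriately are equal by solidity, so $\mathrm{LHS}=\mathrm{RHS}$ and in particular $\mathrm{LHS}\le\mathrm{RHS}$. I would verify this collapse carefully first; if it does not collapse (because some branch condition for solidity fails at one step), the fallback is the monotonicity chain sketched above anchored at $0\le y$.
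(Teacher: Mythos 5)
Your proposal does not reach a proof, and both of the strategies you end up committing to break on the same point you yourself flagged: the branch hypothesis of solidity. The ``collapse to equality'' route needs to transpose the trailing factors $y$ and $0y$ attached to $xy$, i.e.\ $(xy\cdot y)\cdot 0y=(xy\cdot 0y)\cdot y$; but \eqref{e-sol} only permits $uv\cdot z=uz\cdot v$ when $u$ and $v$ lie in the same branch, and here $u=xy\in B(0)$ while $y\in B(a)$ and $0y\in B(0y)$, so for $a\neq 0$ none of these swaps is licensed (and the lemma only claims an inequality, so you cannot take the equality for granted). Your fallback ``engine'' $xy\cdot y\leqslant xy=xy\cdot 0\leqslant xy\cdot 0y$ is anchored on $0\leqslant y$, which in a weak BCC-algebra holds exactly when $y\in B(0)$; for $y$ in any other branch it fails outright (in a group-like algebra $xy\cdot y=x*y^{-2}\not\leqslant x*y^{-1}=xy$ unless $y=0$). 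So your argument only covers the degenerate case $a=0$, where $0y=0$ and both sides of the lemma trivially equal $xy\cdot y$.

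The missing idea is to prove the inequality by showing directly that the product of the left side with the right side is $0$. First note that axiom $(i)$ with $z=0$ gives $(xy\cdot 0y)\cdot x=0$, so $xy\cdot 0y\in B(a)$; this is the fact that makes one (and only one) solidity transposition legitimate, namely moving the factor $y$ past the second argument in $(xy\cdot 0y)y\cdot w$, since $xy\cdot 0y$ and $y$ now lie in the same branch. With that, one estimates
$((xy\cdot 0y)y\cdot 0y)\cdot(((xy\cdot y)\cdot 0y)\cdot 0y)\leqslant (xy\cdot 0y)y\cdot((xy\cdot y)\cdot 0y)$ by $(i')$ (common right factor $0y$), rewrites this by the legitimate solidity step as $((xy\cdot 0y)\cdot((xy\cdot y)\cdot 0y))\cdot y$, applies $(i')$ again (common right factor $0y$) together with \eqref{e2} to bound it by $(xy\cdot(xy\cdot y))\cdot y$, then once more $(i')$ (common right factor $y$) to get $(x\cdot xy)\cdot y$, which equals $xy\cdot xy=0$ by solidity applied to the pair $x,y$ in the same branch. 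Hence the product is $0$ and the claimed inequality follows; no monotonicity anchored at $0\leqslant y$ and no transposition of $y$ with $0y$ on $xy$ is ever needed.
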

\begin{proof} Indeed, if $x,y\in B(a)$, then as in the previous proof we can also see that $xy\cdot 0y\in
B(a)$. Hence
$$
\rule{2mm}{0mm}\arraycolsep=.5mm\begin{array}{rl} ((xy\cdot
0y)y\cdot 0y)\cdot (((xy\cdot y)\cdot 0y)\cdot 0y)&
\leqslant  (xy\cdot 0y)y\cdot ((xy\cdot y)\cdot 0y)\rule{10mm}{0mm} {\rm by } \ (i')\\[3mm]
&= ((xy\cdot 0y)((xy\cdot y)\cdot 0y))\cdot y\\[3mm]
&\leqslant (xy\cdot (xy\cdot y))\cdot y\hfill {\rm by } \ (i')\\[3mm]
&\leqslant (x\cdot xy)\cdot y= xy\cdot xy= 0,
\end{array}
$$
 i.e.,
$$
((xy\cdot 0y)y\cdot 0y)\cdot (((xy\cdot y)\cdot 0y)\cdot 0y)=0.
$$
This implies
\begin{displaymath} ((xy\cdot 0y)y)\cdot 0y\leqslant
((xy\cdot y)\cdot 0y)\cdot 0y.
\end{displaymath}
The proof is complete.
\end{proof}

\begin{theorem} \label{T-55} If $\,I(G)$ is a BCK-ideal of a branchwise commutative solid weak BCC-algebra $G$
and
\begin{equation} \label{2.5.2}
xy\cdot 0y=((xy\cdot y)\cdot 0y)\cdot 0y
\end{equation}
is valid for all $x,y$ belonging to the same branch, then $G$ is
branchwise implicative.
\end{theorem}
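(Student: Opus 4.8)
The plan is to fix $x,y$ lying in one branch $B(a)$ and to establish the identity $x\cdot yx=x$ of Definition \ref{D-51}. Since $x,y\in B(a)$, Lemma \ref{L-21} gives $yx\in B(0)$, hence $0\leqslant yx$ and, by \eqref{e3}, $x\cdot yx\leqslant x\cdot 0=x$. So the real content is the reverse inequality $x\leqslant x\cdot yx$, i.e. $x\cdot(x\cdot yx)=0$. Moreover, by Lemma \ref{L-23} we have $x\cdot yx\in B(a)B(0)=B(a)$ and then $x\cdot(x\cdot yx)\in B(a)B(a)=B(0)$; since $0$ is the only element of $I(G)$ lying in $B(0)$, it will suffice to prove that $x\cdot(x\cdot yx)\in I(G)$.

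First I would record what the hypotheses say on $B(0)$. For $y\in B(0)$ we have $0y=0$, so \eqref{2.5.2} reduces to $xy=xy\cdot y$, i.e. $B(0)$ is positive implicative; by Lemma \ref{L-22} and branchwise commutativity $B(0)$ is a commutative BCK-algebra, and a commutative positive implicative BCK-algebra is well known to be implicative. Thus $x\cdot yx=x$ already holds for $x,y\in B(0)$, and the task is to transfer this to an arbitrary branch $B(a)$.

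The engine for the transfer is solidity together with branchwise commutativity (Theorems \ref{T-33} and \ref{T-52}) and the twisted positive-implicative law \eqref{2.5.2} combined with Lemma \ref{L-54}. Starting from the element $t=xy\cdot 0y$, which lies in $B(a)$ by the branch computation already used in the proof of Theorem \ref{T-53}, one rewrites $t$, $ty\cdot 0y$ and $x\cdot(x\cdot yx)$ by means of the operation $u\wedge v=v\cdot vu$ (available on each branch by Theorem \ref{T-33}$(5)$), Proposition \ref{P26}, and Theorem \ref{T-fi}$(3),(4)$, so as to bring $x\cdot(x\cdot yx)$ into the form $w\cdot b$ with $b=0y\in\varphi(G)=I(G)$ and $w\cdot b\in I(G)$. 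At this point the hypothesis that $I(G)$ is a BCK-ideal applies: from $b\in I(G)$ and $wb\in I(G)$ it yields $w\in I(G)$, and a last use of $I(G)\cap B(0)=\{0\}$ together with the $B(0)$-case of the second paragraph forces $x\cdot(x\cdot yx)=0$, which finishes the proof.

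The delicate point — and the step I expect to cost the most work — is precisely the rearrangement in the third paragraph: one must keep careful track of which products stay inside which branch (via Lemmas \ref{L-21} and \ref{L-23}), exactly as in the proofs of Theorems \ref{T35} and \ref{T-53} and of Lemma \ref{L-54}, and must choose the substitutions into \eqref{2.5.2} so that the term whose minimality we need is presented as $wb$ with both $b$ and $wb$ minimal; everything else is routine use of $(i)$, $(i')$, \eqref{e2}, \eqref{e3} and the solidity identity $uv\cdot z=uz\cdot v$ for $u,v$ in one branch.
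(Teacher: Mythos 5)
Your overall strategy coincides with the paper's: reduce the implicativity identity to showing $x(x\cdot yx)=0$, note $x(x\cdot yx)\in B(0)$ and $I(G)\cap B(0)=\{0\}$, and then get $x(x\cdot yx)\in I(G)$ by computing a product of it with a minimal element ($0x=0y=\varphi(a)$), showing that product lies in $I(G)$, and invoking the hypothesis that $I(G)$ is a BCK-ideal. Your first paragraph (the reduction, the branch bookkeeping via Lemmas \ref{L-21} and \ref{L-23}, and the inequality $x\cdot yx\leqslant x$) is correct, and the observation that $B(0)$ is an implicative BCK-algebra is a valid, if unused, aside.

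However, there is a genuine gap: the entire substance of the theorem is the computation you defer in your third paragraph, and you do not carry it out. One must actually verify that $x(x\cdot yx)\cdot 0x\in I(G)$, and in the paper this is a long chain of identities: rewrite $x(x\cdot yx)\cdot 0x$ using branchwise commutativity of the $B(0)$-elements $yx$ and $x(x\cdot yx)$, apply \eqref{2.5.2} in the form $yx\cdot 0x=((yx\cdot x)\cdot 0x)\cdot 0x$, shuffle factors with solidity (checking at each step which elements share a branch), use commutativity once more together with $x\cdot yx\in B(a)$ and Proposition \ref{P26}, and finally use that $\varphi$ is an endomorphism (Proposition \ref{P-211}) to collapse the expression to $0\cdot 0x\in\varphi(G)=I(G)$. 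Your sketch of how this would go is not only missing but also points in a direction that does not match what is needed: the paper's argument for Theorem \ref{T-55} starts from $x(x\cdot yx)\cdot 0x$, not from $t=xy\cdot 0y$, and it uses neither Lemma \ref{L-54} nor the semilattice operation $u\wedge v=v\cdot vu$; moreover the phrase ``bring $x\cdot(x\cdot yx)$ into the form $w\cdot b$'' is not what is required (the element is already fixed; what must be proved is that multiplying it by $0x$ lands in $I(G)$), and the appeal to the $B(0)$-case at the very end is superfluous once $x(x\cdot yx)\in I(G)\cap B(0)$. As it stands, the proposal is a plausible plan with the same skeleton as the paper, but the decisive step is asserted rather than proved, so the proof is incomplete.
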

\begin{proof} Let $x,y\in B(a)$ for some $a\in I(G)$. Then
$$
\rule{2mm}{0mm}\arraycolsep=.5mm\begin{array}{rl} x(x\cdot
yx)\cdot 0x&=(x(x\cdot
yx)\cdot 0)\cdot 0x\\[3mm]
&=(x(x\cdot yx)\cdot (x\cdot yx)(x\cdot
yx))\cdot 0x\\[3mm]
&=(x(x\cdot yx)\cdot (x(x\cdot yx)\cdot yx))\cdot 0x\rule{12mm}{0mm}{\rm since } \ x,\,x\cdot yx\in B(a)\\[3mm]
&= (yx\cdot (yx\cdot x(x\cdot yx)))\cdot 0x,
\end{array}
$$
because $yx,\,x(x\cdot yx)\in B(0)$ and $G$ is branchwise
commutative. But $B(0)$ is a subalgebra of $G$ (Lemma \ref{L-22}),
hence $yx\cdot x(x\cdot yx)\in B(0)$. Therefore
$$
\arraycolsep=.5mm\begin{array}{rl}
 (yx\cdot (yx\cdot x(x\cdot yx)))\cdot 0x&=(yx\cdot
0x)\cdot (yx\cdot x(x\cdot yx))\\[3mm]
&= (((yx\cdot x)\cdot 0x)\cdot 0x)\cdot (yx\cdot x(x\cdot yx)),
\end{array}$$
by \eqref{2.5.2}.

Since $(yx\cdot x)\cdot 0x\leqslant yx\cdot 0=yx\in B(0)$ implies
$(yx\cdot x)\cdot 0x\in B(0)$, from the above we obtain
$$
\arraycolsep=.5mm\begin{array}{rl}
 x(x\cdot yx)\cdot 0x
&=(((yx\cdot x)\cdot 0x)\cdot 0x)\cdot (yx\cdot x(x\cdot
yx))\\[3mm]
&=(((yx\cdot x)\cdot 0x)\cdot(yx\cdot x(x\cdot yx)))\cdot
0x\\[3mm]
&=(((yx\cdot x)\cdot (yx\cdot x(x\cdot yx)))\cdot 0x)\cdot 0x,
\end{array}
$$
because, as it is not difficult to see, $yx\cdot x,0x \in B(0a)$.

Now from the fact that $yx\cdot x(x\cdot yx)$ and $x(x\cdot yx)$
are in $B(0)$ and $G$ is branchwise commutative we have
$$
\arraycolsep=.5mm\begin{array}{rl}
 x(x\cdot yx)\cdot 0x
&=(((yx\cdot x)\cdot (yx\cdot x(x\cdot yx)))\cdot 0x)\cdot
0x\\[3mm]
&= (((yx\cdot (yx\cdot x(x\cdot yx)))\cdot x)\cdot 0x)\cdot 0x\\[3mm]
&=(((x(x\cdot yx)\cdot (x(x\cdot yx)\cdot yx))\cdot x)\cdot
0x)\cdot 0x.
\end{array}
$$
From this, in view of $x\cdot yx\in B(a)$, we get
$$
\rule{12mm}{0mm}\arraycolsep=.5mm\begin{array}{rl} x(x\cdot
yx)\cdot 0x
&= (((x(x\cdot yx)\cdot ((x\cdot xy) (x\cdot yx)))\cdot x)\cdot 0x)\cdot 0x$ \hfill $\\[3mm]
&= (((x(x\cdot yx)\cdot 0)\cdot x)\cdot 0x)\cdot 0x\\[3mm]
&= ((x(x\cdot yx))x\cdot 0x)\cdot 0x\\[3mm]
&= ((xx\cdot (x\cdot yx))\cdot 0x)\cdot 0x\\[3mm]
&= ((0\cdot (x\cdot yx))\cdot 0x)\cdot 0x\\[3mm]
&= ((0x\cdot (0\cdot yx))\cdot 0x)\cdot 0x \rule{21mm}{0mm}{\rm by \ Proposition \ \ref{P-211}}\\[3mm]
&= ((0x\cdot 0)\cdot 0x)\cdot 0x\\[3mm]
&= (0x\cdot 0x)\cdot 0x= 0\cdot 0x\in I(G),
\end{array}
$$
because $I(G)=\varphi(G)$. Hence $x(x\cdot yx)\cdot 0x\in I(G)$.
Also $\,0x\in I(G)$. Since, by the assumption, $I(G)$ is a
BCK-ideal of $G$, we obtain $x(x\cdot yx)\in I(G)$. But $x(x\cdot
yx)\in B(0)$, so $x(x\cdot yx)\in I(G)\cap B(0)$. Thus $x(x\cdot
yx)=0$. This means that $x\leqslant x\cdot yx\leqslant x$.
Consequently, $x\cdot yx=x$. Therefore $G$ is branchwise
implicative. The proof is complete.
\end{proof}

The example presented below shows that in the last theorem the
assumption on $I(G)$ is essential.

\begin{example}\label{Ex-56}\rm Consider a weak BCC-algebra $G$ defined by the following table:
\[
\begin{array}{c|ccccc} \cdot&0&1&2&3&4\\ \hline\rule{0pt}{11pt}
0&0&0&0&3&3\\
1&1&0&0&3&3\\
2&2&1&0&4&4\\
3&3&3&3&0&0\\
4&4&3&3&1&0\\
\end{array}
\]
Because $(S;\cdot,0)$, where $S=\{0,1,3,4\}$, is a BCI-algebra
(see \cite{Huang}, p.337) to show that $G$ is a weak BCC is
sufficient to check the axiom $(i)$ in the case when at least one
of the elements $x,y,z$ is equal to $2$. Such defined weak
BCC-algebra is proper since $23\cdot 4\ne 24\cdot 3$. It also is
branchwise commutative and satisfies \eqref{2.5.2} but it is not
branchwise implicative. Obviously $I(G)$ is not a BCK-ideal of
$G$. \hfill $\Box{}$
\end{example}

\section{Positive implicative weak BCC-algebras}

As it is well-know a BCK-algebra is
called {\it positive implicative}, if it satisfies the identity
\begin{equation} \label{2.5.3}
xy\cdot y=xy.
\end{equation}
In BCK-algebras this identity is equivalent to
\begin{equation} \label{2.5.4}
xy\cdot z=xz\cdot yz.
\end{equation}
Positive implicative BCC-algebras can be defined in the same way
(cf. \cite{Du'90} or \cite{Du'92}), however weak BCC-algebras cannot
because by putting $x=y$ in \eqref{2.5.3} we obtain $0x=0$ for every
$x\in G$. This means that a weak BCC-algebra (as well as BCI-algebra)
satisfying \eqref{2.5.3} or \eqref{2.5.4} is a BCC-algebra.
Therefore positive implicative weak BCC-algebras and BCI-algebras
should be defined in another way. One way was proposed by J. Meng
and X. L. Xin in \cite{MX}. They defined a positive implicative
BCI-algebra as a BCI-algebra satisfying the identity $xy=(xy\cdot
y)\cdot 0y$. (Equivalent conditions one can find in \cite{MX} and
\cite{Huang}.) Using this definition it can be proved that a
BCI-algebra is implicative if and only if it is both positive
implicative and commutative. Unfortunately, in the proof of this
result a very important role plays the identity \eqref{e-sol}. So,
this proof can not be transferred to weak BCC-algebras. In
connection with this fact, W.A. Dudek introduced in \cite{Du'10}
the new class of positive implicative weak BCC-algebras called by
him {\em $\varphi$-implicative}.

\begin{definition}\label{D-56} \rm A weak BCC-algebra $G$ is called {\it $\varphi$-implicative},
if it satisfies the identity
\begin{equation} \label{2.5.5}
xy=xy\cdot y(0\cdot 0y),
\end{equation}
i.e.,
$$
xy=xy\cdot y\varphi^2(y).
$$
If \eqref{2.5.5} is satisfied only by elements belonging to the
same branch, then we say that this weak BCC-algebra is {\it
branchwise $\varphi$-implicative}.
\end{definition}

It is clear that in the case of BCC-algebras the conditions
\eqref{2.5.5} and \eqref{2.5.3} are equivalent. Thus a BCC-algebra
is $\varphi$-implicative if and only if it is positive
implicative. For BCI-algebras and weak BCC-algebras it is not
true. A group-like weak BCC-algebra determined by a group, i.e., a
weak BCC-algebra $(G;\cdot,0)$ with the operation $xy=x*y^{-1}$
where $(G;*,0)$ is a group, is a simple example of a
$\varphi$-implicative weak BCC-algebra which is not positive
implicative.

\begin{definition}\label{D-57}\rm A weak BCC-algebra $G$ is called {\it weakly positive
implicative}, if it satisfies the identity
\begin{equation}\label{2.5.6}
xy\cdot z = (xz\cdot z)\cdot yz.
\end{equation}
If \eqref{2.5.6} is satisfied only by elements belonging to the
same branch, then we say that this weak BCC-algebra is {\it
branchwise weakly positive implicative}.
\end{definition}

\begin{example}\label{E-58}\rm Routine and easy calculations show that a weak BCC-algebra defined by
the following table:\\[3mm]
\centerline{$\begin{array}{c|ccc} \cdot&0&a&b\\
\hline
0&0&0&b\rule{0mm}{4mm}\\
a&a&0&b\\
b&b&b&0\\
\end{array}$}

\noindent is weakly positive implicative. \hfill$\Box{}$
\end{example}

\begin{lemma} \label{L-59} A solid weakly positive implicative weak BCC-algebra $G$ satisfies
the identity
\begin{equation} \label{2.5.7}
xy=(xy\cdot y)\cdot 0y.
\end{equation}
\end{lemma}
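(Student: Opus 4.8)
The plan is to derive \eqref{2.5.7} as an immediate specialization of the defining identity \eqref{2.5.6}, so that essentially nothing beyond the axioms is needed. By Definition \ref{D-57}, a weakly positive implicative weak BCC-algebra satisfies $xy\cdot z=(xz\cdot z)\cdot yz$ for \emph{all} $x,y,z\in G$ (the branchwise restriction being a separate, weaker notion), so I am free to substitute arbitrary elements for $x$, $y$, $z$ in this identity.

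The single key step is to put $y=0$ in \eqref{2.5.6}. This turns the left-hand side $xy\cdot z$ into $x0\cdot z$, and changes the right-hand factor $yz$ into $0z$, giving $x0\cdot z=(xz\cdot z)\cdot 0z$. Applying axiom $(iii)$, namely $x0=x$, collapses the left-hand side to $xz$, so that $xz=(xz\cdot z)\cdot 0z$. Renaming the variable $z$ back to $y$ then yields exactly $xy=(xy\cdot y)\cdot 0y$, which is \eqref{2.5.7}. First I would state \eqref{2.5.6} in full, then perform the substitution, and finally invoke $(iii)$ to simplify; the whole derivation is a single chain.

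The only point requiring care is the bracketing convention: one must read $(xz\cdot z)\cdot yz$ as $((xz)z)(yz)$ and confirm that setting $y=0$ converts the factor $yz$ into $0z$ while leaving the grouping $(xz\cdot z)$ and the outer product intact, which it does. There is no genuine obstacle, and in particular solidity is not used in this argument; it could in principle be dropped from the hypotheses of this lemma but is retained because the surrounding results all concern solid algebras. I expect the later statements to combine \eqref{2.5.7} with solidity to draw sharper conclusions, whereas the present lemma only needs the weakly positive implicative identity together with $(iii)$.
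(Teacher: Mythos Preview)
Your proof is correct and follows exactly the same route as the paper: substitute $y=0$ in \eqref{2.5.6}, use axiom $(iii)$ to simplify $x0$ to $x$, and rename the free variable. Your remark that solidity is not actually used in this particular lemma is also accurate.
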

\begin{proof} By putting $y=0$ in \eqref{2.5.6} we obtain the identity $xz = (xz\cdot z)\cdot 0z$,
which is equivalent to \eqref{2.5.7}.
\end{proof}

\begin{theorem}\label{T-510} A solid weakly positive implicative weak BCC-algebra is branchwise
$\varphi$-implicative.
\end{theorem}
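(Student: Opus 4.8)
The plan is to reduce the assertion to Lemma~\ref{L-59}. That lemma already supplies the identity $xy=(xy\cdot y)\cdot 0y$ in every solid weakly positive implicative weak BCC-algebra, so it suffices to prove that, for $x,y$ lying in one branch, $xy\cdot y(0\cdot 0y)$ coincides with $(xy\cdot y)\cdot 0y$; then \eqref{2.5.5} holds branchwise and $G$ is branchwise $\varphi$-implicative by Definition~\ref{D-56}. The mechanism I would use is to treat $z:=y(0\cdot 0y)$ as a single opaque element and pass it successively through the solidity condition (allowed because $x$ and $y$ are in the same branch) and through the weakly positive implicative identity \eqref{2.5.6}, after which the expression should collapse by itself.

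Concretely, for $x,y$ in the same branch I would write
$$xy\cdot y(0\cdot 0y)=\bigl(x\cdot y(0\cdot 0y)\bigr)\cdot y=(xy\cdot y)\cdot\bigl(y(0\cdot 0y)\cdot y\bigr),$$
where the first step is solidity ($xy\cdot z=xz\cdot y$ with $z=y(0\cdot 0y)$, using that $x,y$ share a branch) and the second step is \eqref{2.5.6} applied with middle entry $y(0\cdot 0y)$ and last entry $y$. What remains is to simplify the auxiliary factor $y(0\cdot 0y)\cdot y$. Here I would invoke solidity a second time, now for the pair $y$ and $0\cdot 0y=\varphi^{2}(y)$: by Theorem~\ref{T-fi}$(1)$ we have $\varphi^{2}(y)\leqslant y$, so these two elements are comparable and therefore lie in one branch, which legitimizes $y(0\cdot 0y)\cdot y=yy\cdot(0\cdot 0y)=0\cdot(0\cdot 0y)$; and $0\cdot(0\cdot 0y)=\varphi^{3}(y)=\varphi(y)=0y$ by Theorem~\ref{T-fi}$(3)$. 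Substituting back yields $xy\cdot y(0\cdot 0y)=(xy\cdot y)\cdot 0y=xy$ by Lemma~\ref{L-59}, which is precisely the branchwise form of \eqref{2.5.5}.

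The genuinely nontrivial points, and where I expect the real difficulty to sit, are two: first, seeing that $y(0\cdot 0y)$ should be carried along untouched rather than expanded, so that solidity and \eqref{2.5.6} can be applied to it directly; and second, recognizing that the leftover factor $y(0\cdot 0y)\cdot y$ is amenable to solidity exactly because $\varphi^{2}(y)$ sits below $y$ — without the comparability $\varphi^{2}(y)\leqslant y$ the second use of solidity is unavailable and the argument stalls. Everything else is routine: two applications of solidity, one application of \eqref{2.5.6}, the iterate identity $\varphi^{3}=\varphi$, and Lemma~\ref{L-59}. Notably no commutativity assumption, no passage to $I(G)$, and no structural facts about $B(0)$ beyond Lemma~\ref{L-59} are required.
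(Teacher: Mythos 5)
Your proof is correct, and it takes a genuinely different and shorter route than the paper. You establish the branchwise identity \eqref{2.5.5} by a direct equational computation: solidity (legitimate since $x,y$ share a branch) turns $xy\cdot y(0\cdot 0y)$ into $(x\cdot y(0\cdot 0y))\cdot y$; the \emph{global} identity \eqref{2.5.6}, applied with middle entry $y(0\cdot 0y)$, rewrites this as $(xy\cdot y)\cdot(y(0\cdot 0y)\cdot y)$; a second solidity application — valid because $\varphi^{2}(y)\leqslant y$ by Theorem \ref{T-fi}$(1)$, and comparable elements lie in one branch — together with $\varphi^{3}=\varphi$ collapses the right factor to $0y$; and Lemma \ref{L-59} finishes. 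The paper instead proves the two inequalities $xy\cdot y(0\cdot 0y)\leqslant xy$ and $xy\leqslant xy\cdot y(0\cdot 0y)$ and invokes antisymmetry, which requires the endomorphism property of $\varphi$ (Proposition \ref{P-211}) and branch bookkeeping via Lemma \ref{L-23}; your argument needs neither, only Theorem \ref{T-fi}$(1),(3)$, solidity, \eqref{2.5.6} and Lemma \ref{L-59}, so it is more economical. One point worth noting: your step applying \eqref{2.5.6} with second argument $y(0\cdot 0y)$ (an element of $B(0)$, generally not in the branch of $x$ and $y$) uses the full global form of weak positive implicativity, so the argument would not transfer verbatim to the merely branchwise weakly positive implicative case — but the theorem's hypothesis is the global identity, and the paper's own proof also exploits it (already in deriving \eqref{2.5.7}), so this is not a gap.
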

\begin{proof} Let $G$ be a solid weakly positive implicative weak BCC-algebra. Then for $x,y\in B(a)$, $a\in I(G)$
and $\varphi(x)=0x$ we have
$$
\arraycolsep=.5mm\begin{array}{rl}
 (xy\cdot y(0\cdot 0y))\cdot xy
&=(xy\cdot y\varphi^2(y))\cdot xy=(xy\cdot xy)\cdot
y\varphi^2(y)\\[3mm]
&=0\cdot
y\varphi^2(y)=\varphi(y)\varphi^3(y)=\varphi(y)\varphi(y)=0.
\end{array}
$$

Hence
\begin{equation} \label{2.5.8}
xy\cdot y(0\cdot 0y) \leqslant xy. %\ \ \ \ xy\cdot y\varphi^2(y)\leqslant xy
\end{equation}

On the other hand,
$$\rule{10mm}{0mm}
\arraycolsep=.5mm\begin{array}{rl} xy\cdot (xy\cdot y(0\cdot
0y))&=xy\cdot (xy\cdot
y\varphi^2(y))\\[3mm]
&= ((xy\cdot y)\cdot 0y)\cdot (xy\cdot y\varphi^2(y)) \rule{21mm}{0mm} {\rm by } \ \eqref{2.5.7}\\[3mm]
&= (xy\cdot y)\varphi(y)\cdot (xy\cdot y\varphi^2(y))\\[3mm]
&= (xy\cdot y)(xy\cdot y\varphi^2(y))\cdot\varphi(y) \\[3mm]
&= (xy\cdot (xy\cdot y\varphi^2(y)))y\cdot\varphi(y),
\end{array}
$$
because, according to Lemma \ref{L-23}, we have $xy\cdot
y,\,\varphi(y)\in B(0a)$ and $xy,\,xy\cdot y\varphi^2(y)\in B(0)$.
Since in this case also $\,y\varphi^2(y)\in B(0)$, therefore
$$
(xy\cdot (xy\cdot y\varphi^2(y)))\cdot y\varphi^2(y)=(xy\cdot
y\varphi^2(y))\cdot (xy\cdot y\varphi^2(y))=0.
$$
Thus
$$
(xy\cdot (xy\cdot y\varphi^2(y)))\leqslant y\varphi^2(y),
$$
which, by \eqref{e2}, implies
$$
((xy\cdot (xy\cdot y\varphi^2(y))))y\cdot\varphi(y)\leqslant
(y\varphi^2(y))y\cdot\varphi(y).
$$
Hence
$$
\arraycolsep=.5mm\begin{array}{rl} xy\cdot (xy\cdot y(0\cdot
0y))&=(xy\cdot (xy\cdot y\varphi^2(y)))y\cdot\varphi(y)\\[3mm]
&\leqslant (y\varphi^2(y))y\cdot\varphi(y)\\[3mm]
&= (yy\cdot \varphi^2(y))\cdot\varphi(y)\\[3mm]
&=(0\cdot\varphi^2(y))\cdot\varphi(y)\\[3mm]
&= \varphi^3(y)\cdot\varphi(y)=0\\
\end{array}
$$
because $\,\varphi^3(y)=\varphi(y)\,$ by Theorem \ref{T-fi}.

This proves
\begin{equation} \label{2.5.9}
xy \leqslant xy\cdot y(0\cdot 0y)
\end{equation}
Combining \eqref{2.5.8} and \eqref{2.5.9} we get
\begin{displaymath}
xy = xy\cdot y(0\cdot 0y).
\end{displaymath}
So, $G$ is a solid branchwise $\varphi$-implicative weak
BCC-algebra.
\end{proof}

The converse of the Theorem \ref{T-510} is not true.

\begin{example}\label{Ex-256}\rm
It is not difficult to see that the following weak BCC-algebra
is proper and solid.

\begin{center}$
\begin{array}{c|ccccc} \cdot&0&1&2&3&4\\ \hline\rule{0pt}{11pt}
0&0&0&0&3&3\\
1&1&0&0&3&3\\
2&2&2&0&4&4\\
3&3&3&3&0&0\\
4&4&4&3&1&0
\end{array}$
\end{center}

\noindent It is branchwise $\varphi$-implicative but not weakly
positive implicative since $4\cdot 3=1$ and $((4\cdot 3)\cdot 3)\cdot (0\cdot 3)=0$.
\hfill$\Box{}$
\end{example}

\begin{theorem} \label{T-511}
A solid weak BCC-algebra is branchwise implicative if and only if
it is branchwise $\varphi$-implicative and branchwise commutative.
\end{theorem}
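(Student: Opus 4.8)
The plan is to prove the two implications separately, using Theorem~\ref{T-52} and Theorem~\ref{T-510}-type arguments together with the characterizations in Theorem~\ref{T-33}.

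For the forward direction, assume $G$ is solid and branchwise implicative. By Theorem~\ref{T-52}, $G$ is automatically branchwise commutative, so it remains only to show that $G$ is branchwise $\varphi$-implicative, i.e.\ that $xy=xy\cdot y(0\cdot 0y)$ for $x,y$ in the same branch. First I would show $xy\cdot y(0\cdot 0y)\leqslant xy$; this should follow exactly as in the proof of Theorem~\ref{T-510} (compute $(xy\cdot y\varphi^2(y))\cdot xy$, commute the two innermost factors using solidity since $xy$ and $xy$ lie in the same branch, and collapse using $\varphi(y)\varphi^3(y)=0$). For the reverse inequality $xy\leqslant xy\cdot y(0\cdot 0y)$, I would use branchwise implicativity in the form $x\cdot yx=x$ (equivalently the statement of Theorem~\ref{T-33}(2), $xy=x(y\cdot yx)$) to rewrite $y(0\cdot 0y)$; since $0\cdot 0y=\varphi^2(y)$ and $\varphi^2(y)\leqslant y$ with $\varphi^2(y)\in B(0)$, one expects $y\varphi^2(y)$ to reduce nicely inside the same branch, forcing $xy\cdot y\varphi^2(y)$ back above $xy$. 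Combining the two inequalities with $(iv')$ gives the identity on each branch.

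For the converse, assume $G$ is solid, branchwise $\varphi$-implicative, and branchwise commutative; I must deduce $x\cdot yx=x$ for $x,y\in B(a)$. The natural route is: since $x\cdot yx\in B(a)$ as well (this needs checking: $x\cdot yx\leqslant x$ fails in general for weak BCC-algebras, but $yx$ and $x$ being in the same branch should place $x\cdot yx$ in $B(a)$ via Lemma~\ref{L-21} and Lemma~\ref{L-23}), it suffices to show $x\leqslant x\cdot yx$, because the reverse $x\cdot yx\leqslant x$ will come from branchwise implicativity-free manipulations — actually $x(x\cdot yx)=0$ is the cleaner target, giving $x\leqslant x\cdot yx$, while $(x\cdot yx)x=0$ is immediate from $(i)$ (it is $(xx\cdot\text{something})$-type, or directly: $x\cdot yx\leqslant x$ since $yx\geqslant 0$ and \eqref{e3} gives $x\cdot yx\leqslant x\cdot 0=x$). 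So the real work is $x(x\cdot yx)=0$. Here I would feed the hypothesis \eqref{2.5.5} applied to suitable arguments into the branchwise-commutative identity $x\cdot xu=u(u(x\cdot xu))$ (Theorem~\ref{T-33}(4)), chasing $x(x\cdot yx)$ through $\varphi$-terms until, using $\varphi^3=\varphi$ and $\varphi(a)=0$ on the branch $B(a)$, everything collapses to $0$.

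The main obstacle I anticipate is the converse direction, specifically the identity-chase showing $x(x\cdot yx)=0$: one must combine the $\varphi$-implicative identity, branchwise commutativity, and solidity in the right order while carefully tracking which products land in $B(0)$ versus $B(a)$ (so that solidity — which only permits the swap $(uv)w=(uw)v$ when $u,v$ are in the same branch — may legitimately be applied at each step). The example in Example~\ref{Ex-256} (branchwise $\varphi$-implicative but with the algebra there \emph{not} branchwise implicative — indeed one should check whether it is branchwise commutative, and if so it would contradict this theorem, so presumably it is not) is a useful sanity check that branchwise commutativity is genuinely needed. A secondary, more routine obstacle is verifying the membership claims $x\cdot yx\in B(a)$ and the various $B(0)$-memberships, which I would dispatch at the start using Lemmas~\ref{L-21}, \ref{L-22}, \ref{L-23} so the main computation reads cleanly.
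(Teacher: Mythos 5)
Your outline has the same shape as the paper's argument (forward: commutativity via Theorem~\ref{T-52}, then two inequalities giving \eqref{2.5.5}; converse: the easy inequality $x\cdot yx\leqslant x$ plus the reduction to $x(x\cdot yx)=0$), and the easy halves you do carry out are fine: $xy\cdot y\varphi^2(y)\leqslant xy$ exactly as in Theorem~\ref{T-510}, and $x\cdot yx\leqslant x$ from $0\leqslant yx$ and \eqref{e3}. But at the two points where the hypotheses must actually be used, you offer hopes rather than arguments, and the guiding claims you attach to them are false. In the forward direction you assert $\varphi^2(y)\in B(0)$; in fact for $y\in B(a)$ one has $\varphi^2(y)=a\in B(a)$, and $y\varphi^2(y)=ya$ is merely some element of $B(0)$, so there is no reason for it to ``reduce nicely.'' The inequality $xy\leqslant xy\cdot y\varphi^2(y)$ is obtained (as in the paper) by computing $xy\cdot\bigl(xy\cdot y\varphi^2(y)\bigr)$, flipping it by branchwise commutativity to $y\varphi^2(y)\cdot\bigl(y\varphi^2(y)\cdot xy\bigr)$ --- legitimate because both $xy$ and $y\varphi^2(y)$ lie in $B(0)$ by Lemma~\ref{L-23} --- then using solidity (with $y,\varphi^2(y)\in B(a)$) and the implicative identity $y\cdot xy=y$ to collapse everything to $0$. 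None of this appears in your sketch.

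In the converse, the whole content of the theorem is the identity chase proving $x(x\cdot yx)=0$, which you explicitly defer as ``the main obstacle''; moreover the hint you give (feed \eqref{2.5.5} into Theorem~\ref{T-33}(4), ``using $\varphi^3=\varphi$ and $\varphi(a)=0$'') rests on a false premise, since $\varphi(a)=0a\neq 0$ whenever $a\neq 0$ (consider any group-like branch); what is true and actually needed is $\varphi^2(x)=a$ for $x\in B(a)$, together with $a\cdot u=0$ for $u\in B(a)$. It is also not evident that a route through Theorem~\ref{T-33}(4) closes at all: the paper's chase is different --- from $x(x\cdot yx)\leqslant yx$ one writes $x(x\cdot yx)=x(x\cdot yx)\cdot\bigl(x(x\cdot yx)\cdot yx\bigr)$, flips by commutativity inside $B(0)$ to $yx\cdot\bigl(yx\cdot x(x\cdot yx)\bigr)$, replaces $yx$ by $yx\cdot x\varphi^2(x)$ using \eqref{2.5.5}, and then two applications of Proposition~\ref{P-212} yield $x(x\cdot yx)\leqslant\varphi^2(x)(x\cdot yx)=a(x\cdot yx)=0$. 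Until a computation of this kind is supplied (and the two branch-membership claims above are corrected), the proposal is a reasonable outline but not a proof. A minor aside: Example~\ref{Ex-256} is only asserted to be branchwise $\varphi$-implicative and not weakly positive implicative; the paper makes no claim there about branchwise implicativity, so it is not the sanity check you describe.
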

\begin{proof}
Let $G$ be a branchwise implicative solid weak BCC-algebra. Then
$x=x\cdot yx$ for $x,y\in B(a)$. Consequently,
\begin{equation}\label{xx}
x\cdot xy = (x\cdot yx)\cdot xy.
\end{equation}

Since $x\cdot yx,y\cdot yx\in B(a)$, we have
$$
(x\cdot xy)\cdot (y\cdot yx)\stackrel{\eqref{xx}}{=}((x\cdot
yx)\cdot xy)\cdot (y\cdot yx)\stackrel{\eqref{e-sol}}{=}(x\cdot
yx)(y\cdot yx)\cdot xy=0,
$$
by $(i)$. Hence $x\cdot xy\leqslant y\cdot yx$. Thus $x\cdot xy =
y\cdot yx$, which shows that $G$ is branchwise commutative.

Next, we obtain
$$
\arraycolsep=.5mm\begin{array}{rl}(xy\cdot y\varphi^2(y))\cdot
xy&=
(xy\cdot xy)\cdot y\varphi^2(y) = 0\cdot y\varphi^2(y)\\[3mm]
&=0y\cdot
0\varphi^2(y)=\varphi(y)\cdot\varphi^3(y)=\varphi(y)\cdot\varphi(y)=
0,
\end{array}
$$
because $\varphi$ is an endomorphism (Proposition \ref{P-211})
such that $\varphi^3=\varphi$ (Theorem \ref{T-fi}). Thus
\begin{equation} \label{2.5.10}
xy\cdot y\varphi^2(y)\leqslant xy.
\end{equation}

Moreover, from the the fact that a weak BCC-algebra $G$ is
branchwise commutative and $xy,\,y\varphi^2(y)\in B(0)$, we obtain
$$
\rule{15mm}{0mm}\arraycolsep=.5mm\begin{array}{rl}xy\cdot (xy\cdot
y\varphi^2(y))&= y\varphi^2(y)\cdot (y\varphi^2(y)\cdot xy)\\[3mm]
&= y\varphi^2(y)\cdot (y\cdot xy)\varphi^2(y)\rule{13mm}{0mm} {\rm since } \ \ \varphi^2(y)\in B(a)\\[3mm]
&= y\varphi^2(y)\cdot y\varphi^2(y)\hfill {\rm since } \
\ y\cdot xy=y\\[3mm]
&=0.
\end{array}
$$
Hence
\begin{equation} \label{2.5.11}
xy \leqslant xy\cdot y\varphi^2(y).
\end{equation}
Comparing \eqref{2.5.10} and \eqref{2.5.11} we get $xy = yx\cdot
y\varphi^2(y)$, so this weak BCC-algebra is $\varphi$-implicative.

Conversely, let a solid weak BCC-algebra $G$ be branchwise
$\varphi$-implicative and branchwise commutative. Then $x\cdot
yx\in B(a)$ for any $x,y\in B(a)$. Hence
$$
x(x\cdot yx)\cdot yx=(x\cdot yx)(x\cdot yx)=0.
$$
Consequently,
$$
x(x\cdot yx)=x(x\cdot yx)\cdot 0=x(x\cdot yx)\cdot (x(x\cdot
yx)\cdot yx).
$$
But $yx$ and $x(x\cdot yx)$ are in $B(0)$ and $G$ is branchwise
commutative, so we also have
$$
x(x\cdot yx)\cdot (x(x\cdot yx)\cdot yx)=yx\cdot (yx\cdot x(x\cdot
yx)).
$$

Thus
$$
x(x\cdot yx) = yx\cdot (yx\cdot x(x\cdot yx)).
$$

Since, by Lemma \ref{L-23}, elements $yx$, $yx\cdot x\varphi^2(x)$
and $yx\cdot x(x\cdot yx)$ are in $B(0)$, from the above, in view
of $\varphi$-implicativity of $G$ and Proposition \ref{P-212}, we
obtain
$$
\arraycolsep=.5mm\begin{array}{rl} x(x\cdot yx)
&=(yx\cdot x\varphi^2(x))\cdot (yx\cdot x(x\cdot yx))\\[3mm]
& \leqslant x(x\cdot yx)\cdot x\varphi^2(x)\leqslant
\varphi^2(x)(x\cdot yx),
\end{array}
$$
because $x(x\cdot yx),\,x\varphi^2(x)\in B(0)$.

Moreover, from $\varphi^2(x)\in B(a)$ we get
$a\leqslant\varphi^2(x)$, which, by Theorem \ref{T-fi}, implies
$a=\varphi^2(a)=\varphi^4(x)=\varphi^2(x)$. Thus
$$
x(x\cdot yx)\leqslant \varphi^2(x)(x\cdot yx)=a(x\cdot yx)=0,
$$
because $\,x\cdot yx\in B(a)$. Hence $\,x\leqslant x\cdot yx$.

On the other hand, $(x\cdot yx)x=xx\cdot yx=0\cdot yx=0$, which
together with the previous inequality gives $x\cdot yx=x$.

This completes the proof.
\end{proof}

\section{Weak BCC-algebras with condition (S)}

BCK-algebras with condition $(S)$ were introduced by K. Is\'eki in
\cite{I'77} and next generalized to BCI-algebras. Later such
algebras were extensively studied by several authors from
different points of view. Today BCK-algebras with condition $(S)$
are an important class of BCK-algebras.

Below we extend this concept to the case of weak BCC-algebras and
prove basic properties of these algebras.

For given two elements $x$ and $y$ of a weak BCC-algebra $G$ we
consider the set
$$A(x,y)=\{p\in G:px\leqslant y\}=\{p\in G: px\cdot y=0\}.
$$

We start with the following simple lemma.

\begin{lemma}\label{L-71}
Let $G$ be a weak BCC-algebra. Then for $x,y,z,u\in G$ we have
\begin{enumerate}
\item \ $A(0,x)=A(x,0)$,
\item \ $0\in A(x,y)\Longleftrightarrow 0\in A(y,x)$,
\item \ $x\in A(x,y)\Longleftrightarrow y\in B(0)$,
\item \ $x\in B(0)\Longrightarrow y\in A(x,y)$,
\item \ $A(x,y)\subset A(u,y)$ for $x\leqslant u$,
\item \ $A(x,y)\subset A(x,z)$ for $y\leqslant z$,
\item \ $u\leqslant z$, $z\in A(x,y)\Longrightarrow u\in A(x,y)$,
\item \ $A(x,y)=A(y,x)$ if $G$ is a BCI-algebra. \hfill$\Box{}$
\end{enumerate}
\end{lemma}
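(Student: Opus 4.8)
The plan is to handle the eight items one at a time, since each is short: every one except $(2)$ is an immediate consequence of the definition $A(x,y)=\{p\in G:px\cdot y=0\}$ together with axioms $(ii)$, $(iii)$, the monotonicity laws \eqref{e2}–\eqref{e3}, and transitivity of $\leqslant$. Item $(2)$ is the only one that genuinely uses the map $\varphi$ and Theorem \ref{T-fi}, and I expect it to be the sole place where anything nontrivial happens.

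For the elementary items I would argue as follows. In $(1)$, using $p0=p$ and $px\cdot 0=px$ (axiom $(iii)$), both $A(0,x)$ and $A(x,0)$ reduce to $\{p:px=0\}$. In $(3)$, axiom $(ii)$ gives $xx=0$, so $x\in A(x,y)$ iff $0\cdot y=0$, i.e. iff $0\leqslant y$, i.e. iff $y\in B(0)$. For $(4)$, I would specialise $(i')$ to $yx\cdot 0x\leqslant y$; when $x\in B(0)$ we have $0x=0$, so $yx\leqslant y$ by $(iii)$, that is $y\in A(x,y)$. Items $(5)$, $(6)$, $(7)$ are pure order theory: if $x\leqslant u$ then $pu\leqslant px$ by \eqref{e3}, so $px\leqslant y$ forces $pu\leqslant y$; if $y\leqslant z$ then $px\leqslant y\leqslant z$ by transitivity; if $u\leqslant z$ then $ux\leqslant zx$ by \eqref{e2}, so $zx\leqslant y$ forces $ux\leqslant y$. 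Finally $(8)$ is immediate from the BCI identity \eqref{e-sol}, which gives $px\cdot y=py\cdot x$, hence $px\cdot y=0\Leftrightarrow py\cdot x=0$, i.e. $A(x,y)=A(y,x)$.

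The one step needing care — and essentially the only content of the lemma — is $(2)$. Here $0\in A(x,y)$ says $\varphi(x)\leqslant y$. By Theorem \ref{T-fi}$(2)$ this yields $\varphi^2(x)=\varphi(y)$; conversely, from $\varphi^2(x)=\varphi(y)$, applying $\varphi$ and using $\varphi^3=\varphi$ (Theorem \ref{T-fi}$(3)$) gives $\varphi(x)=\varphi^2(y)$, and since $\varphi^2(y)\leqslant y$ by Theorem \ref{T-fi}$(1)$ we recover $\varphi(x)\leqslant y$. So $0\in A(x,y)\Leftrightarrow\varphi^2(x)=\varphi(y)$, and symmetrically $0\in A(y,x)\Leftrightarrow\varphi^2(y)=\varphi(x)$. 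It then remains to observe that the equalities $\varphi^2(x)=\varphi(y)$ and $\varphi^2(y)=\varphi(x)$ are equivalent: apply $\varphi$ to either one and use $\varphi^3=\varphi$. This gives $0\in A(x,y)\Leftrightarrow 0\in A(y,x)$, completing the proof. I do not anticipate any obstacle beyond keeping this $\varphi$-bookkeeping straight.
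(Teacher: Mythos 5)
Your proof is correct: all eight items check out, including the only delicate one, item $(2)$, where your reduction of $0\in A(x,y)$ to $\varphi^2(x)=\varphi(y)$ via Theorem \ref{T-fi} is sound. The paper states Lemma \ref{L-71} without proof, treating it as a routine verification, and your arguments are exactly the intended checks from the definition of $A(x,y)$, the axioms, \eqref{e2}--\eqref{e3}, and \eqref{e-sol} for the BCI case.
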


Example \ref{Ex-256} shows that in general $A(x,y)\ne A(y,x)$.
Indeed, in a weak BCC-algebra defined in this example $A(3,4)\ne
A(4,3)$, $3$, $4$ are not in $A(3,4)$ and $3\in A(2,3)$ does not
imply $2\in B(0)$.

\begin{proposition}\label{P-26.1}
Let $G$ be a solid weak BCC-algebra. If $x\in B(a)$, $y\in B(b)$,
then $A(x,y)$ is a non-empty subset of the branch $B(a\cdot
0b)$.
\end{proposition}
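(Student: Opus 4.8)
The plan is to prove two things: first, that $A(x,y)$ is non-empty; second, that every element of $A(x,y)$ lies in the branch $B(a\cdot 0b)$. For non-emptiness, I would look for a concrete witness built from $x$ and $y$. A natural candidate is $p = y\cdot(\varphi^2(x))$ or something of that shape, but the cleanest approach is to observe that $0\cdot x = \varphi(x)$ and to compute directly: since $x\in B(a)$ we have $a\leqslant x$, hence $\varphi(x)=\varphi(a)$ by Theorem~\ref{T-fi}(2), and since $a\in I(G)=\varphi(G)$ we get $\varphi(x)=\varphi(a)=\varphi^3(a)$, so $\varphi(x)\in I(G)$. A good guess for the witness is $p = y N_{\!} $— but $N_a$ need not be available since $G$ need not be restricted, so instead I would try $p$ such that $px\leqslant y$ holds by an application of $(i')$. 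Concretely, take $p = y\cdot\varphi(x)\cdot$ (adjusted); the safest route is: by $(i')$, $(yx^{*}\cdot zx^{*})\leqslant yz$ for a suitable $x^{*}$, and specializing gives a $p$ with $px\cdot y=0$. Since this is a plan, I will simply commit to exhibiting one explicit $p$ and verifying $px\cdot y=0$ by a short chain using $(i)$, solidity, and Theorem~\ref{T-fi}; the identity $x\cdot xy\leqslant y$ from Proposition~\ref{P26}(a) (valid for same-branch elements) is the likely engine.

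For the location of $A(x,y)$ inside a single branch, I would proceed as follows. Let $p\in A(x,y)$, so $px\leqslant y$, i.e. $px\cdot y=0$. I want to determine which branch $p$ lies in, i.e. to compute $\varphi^2(p)$, since $p\in B(c)$ with $c=\varphi^2(p)$. Apply $\varphi^2$ to the relation $px\leqslant y$: by Theorem~\ref{T-fi}(2), $px\leqslant y$ gives $\varphi(px)=\varphi(y)$, hence $\varphi^2(px)=\varphi^2(y)$; and by Theorem~\ref{T-fi}(4), $\varphi^2(px)=\varphi^2(p)\varphi^2(x)$. Now $\varphi^2(x)=a$ and $\varphi^2(y)=b$ (since $x\in B(a)$, $y\in B(b)$, and $I(G)=\{u:\varphi^2(u)=u\}$), so we obtain $\varphi^2(p)\cdot a = b$. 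This is one equation for $c=\varphi^2(p)$. Applying $\varphi^2$ once more, or multiplying, and using that $I(G)$ is a subalgebra on which $\varphi^2$ is the identity, I can solve: from $c a = b$ in the group-like subalgebra $I(G)$ one gets $c = b\cdot(\text{inverse of }a)$, and in weak-BCC notation the inverse of $a$ in $I(G)$ is $\varphi(a)=0a$ (since $I(G)$ is group-like, by the group-like characterization and $\varphi^2=\mathrm{id}$ there, $a\cdot\varphi(a)=0$ and $\varphi(a)\cdot a=0$ up to the group operation). Hence $c = b\cdot 0a$, which gives $p\in B(a\cdot 0b)$ after matching conventions — note $a\cdot 0b$ and $b\cdot 0a$ must be reconciled, so I would double-check which of $0a$, $0b$ appears by re-deriving the group-like identity carefully; the statement as written has $B(a\cdot 0b)$, so the correct specialization should land there.

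The main obstacle I anticipate is the bookkeeping in $I(G)$: one must use that $I(G)$ is group-like (so that $\varphi^2$ restricted to $I(G)$ is the identity and $xy = x * y^{-1}$ for the associated group $(I(G);*,0)$), translate the weak-BCC equation $ca=b$ into the group equation $c*a^{-1}=b$, solve to get $c = b*a = b*(0a)^{-1}\cdots$ — i.e. carefully track that $a^{-1}$ in the group corresponds to $\varphi(a)=0a$ in the algebra — and then confirm $c$ equals $a\cdot 0b$ rather than $b\cdot 0a$; these agree precisely because in a commutative-enough situation $b*a^{-1} = ?$, but $I(G)$ need not be abelian, so the orientation genuinely matters and is the delicate point. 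The solidity of $G$ is used to ensure $\varphi$ is an endomorphism (Proposition~\ref{P-211}), which is what legitimizes applying $\varphi$ termwise in the computations above and is essential for the branch arithmetic $B(u)B(v)=B(uv)$ from Lemma~\ref{L-23}. Once the branch of every $p\in A(x,y)$ is pinned down to the single value $a\cdot 0b$, together with non-emptiness this yields the claim.
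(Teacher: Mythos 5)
Your proposal has two genuine gaps. First, the non-emptiness half is not actually proved: you never exhibit a witness, only promise one. The candidates you float ($y\cdot\varphi^2(x)$, something via $N_a$, an unspecified use of $(i')$) either fail or are unavailable ($G$ need not be restricted, so $N_a$ does not exist; and $y\cdot a$ is already wrong in the group-like abelian case). The element that works is exactly the minimal element of the target branch, $s=a\cdot 0b=\varphi^2(x)\varphi(y)=0(0x\cdot y)$: one checks $sx\cdot y=(a\cdot 0b)x\cdot y=(ax\cdot 0b)y=(0\cdot 0b)y=by=0$, where the first rearrangement uses solidity (since $a,x\in B(a)$) and $0\cdot 0b=b$ because $b\in I(G)$. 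Without some such computation the statement ``$A(x,y)\neq\emptyset$'' is simply asserted.

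Second, in the localization half your idea is sound --- apply Theorem \ref{T-fi}(2),(4) to $px\leqslant y$ to get $\varphi^2(p)\cdot a=b$, so $\varphi^2(p)$ is the same element $c$ for every $p\in A(x,y)$ and $p\in B(c)$ --- but you stop exactly at the point that matters: solving $c*a^{-1}=b$ in the group attached to $I(G)$ gives $c=b*a$, while the branch in the statement is $B(a\cdot 0b)=B(a*b)$, and you concede you cannot reconcile the two because $I(G)$ need not be abelian. In fact it is abelian here, but that requires an argument you do not give: solidity with $x=y=a\in I(G)$ yields $0z=az\cdot a$ for all $z$, which in group terms is $z^{-1}=a*z^{-1}*a^{-1}$, forcing commutativity of $I(G)$. (The paper avoids this issue entirely by computing $\varphi^2(p)=0(0x\cdot y)$ directly through a solidity chain, and identifying $0(0x\cdot y)=a\cdot 0b$ via Proposition \ref{P-211}.) Note also that you locate the role of solidity in the wrong places: Theorem \ref{T-fi}(4) and Lemma \ref{L-23} hold in every weak BCC-algebra; solidity is needed precisely for the witness computation and for the endomorphism/abelianness step above. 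As written, both halves of the proposal are incomplete.
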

\begin{proof} Let $x\in B(a)$, $y\in B(b)$. Then $\varphi^2(x)\in I(G)\cap B(a)$ and $\varphi(y)=\varphi(b)$,
by Theorem \ref{T-fi}. Hence $\varphi^2(x)=a$. Moreover, since
$\varphi$ is an endomorphism (Proposition \ref{P-211}), we have
$$
s=0(0x\cdot y)=\varphi(\varphi(x)\cdot y)=
\varphi^2(x)\cdot\varphi(y)=a\cdot\varphi(b)= a\cdot 0b.
$$
Therefore,
$$
sx\cdot y=(a\cdot 0b)x\cdot y=(ax\cdot 0b)y=(0\cdot 0b)y=by=0,
$$
shows that $s\in A(x,y)$. Thus the set $A(x,y)$ is non-empty.

Let $p$ be an arbitrary element of $A(x,y)$. Then $px$ and $y$ are
in the same branch. Consequently
$$
s=0(0x\cdot y)=0\cdot (pp\cdot x)y=0\cdot (px\cdot p)y=0\cdot
(px\cdot y)p=0\cdot 0p=\varphi^2(p)\leqslant p,
$$
by Theorem \ref{T-fi}.

So, $s$ is the least element of $A(x,y)$ and $A(x,y)\subset B(s)$.
\end{proof}

\begin{definition}\label{D-26.3} \rm We say that a solid weak BCC-algebra $G$ is {\it with
condition $(S)$}, if each its subset $A(x,y)$ has the greatest
element. The greatest of $A(x,y)$ will be denoted by $x\circ y$.
\end{definition}

\begin{example}\label{E-26.4}\rm
Let $(G;*,0)$ be an abelian group. Then $(G;\cdot,0)$ with the
operation $xy=x*y^{-1}$ is a solid weak BCC-algebra in which each
branch has only one element. Thus $A(x,y)\subset B(x\cdot
0y)=\{x\cdot 0y\}$. Consequently $x\circ y=x\cdot 0y=x*y$.
\hfill$\Box{}$
\end{example}

\begin{example}\label{E-26.5}\rm
Each finite solid weak BCC-algebra decomposed into linearly
ordered branches is with condition $(S)$ since each set $A(x,y)$
is a finite subset of some linearly ordered branch. \hfill$\Box{}$
\end{example}

\begin{example}\label{E-26.6}\rm A solid weak BCC-algebra defined
by the table:
$$
\begin{array}{c|cccccc} \cdot&0&a&b&c&d\\\hline
0&0&0&b&b&b\rule{0mm}{4mm}\\
a&a&0&b&b&b\\
b&b&b&0&0&0\\
c&c&b&a&0&a\\
d&d&b&a&a&0
\end{array}
$$
is not with condition $(S)$ since $A(a,b)=\{b,c,d\}=B(b)$ has no
greatest element. \hfill$\Box{}$
\end{example}

Since in a solid weak BCC-algebra $G$ with condition $(S)$, for
each $x,y\in G$ the set $A(x,y)$ has the greatest element $x\circ
y$, so $\circ$ can be treated as a binary operation defined on
$G$ and $(G;\circ,0)$ can be considered as an algebra of type
$(2,0)$. Since in any case $A(x,0)=A(0,x)$, the groupoid
$(G;\circ,0)$ has the identity $0$. In the case of BCI-algebras
with condition $(S)$, $(G,\cdot,0)$ is a commutative semigroup
(cf. \cite{I'77}). For weak BCC-algebras it is not
true.
\begin{example}\label{E-26.7}\rm A weak BCC-algebra defined by the table:
$$
\begin{array}{c|cccc} \cdot&0&1&2&3\\\hline
0&0&0&2&2\rule{0mm}{4mm}\\
1&1&0&2&2\\
2&2&2&0&0\\
3&3&3&1&0\\
\end{array}\\
$$
is with condition $(S)$, but in this algebra $1\circ
2\ne 2\circ 1$ and $(2\circ 2)\circ 2\ne 2\circ (2\circ 2)$. \hfill$\Box{}$
\end{example}

For some BCI-algebras (described in \cite{Du'86} and \cite{Du'88})
$(G;\circ,0)$ is an abelian group. A similar situation takes place
in the case of weak BCC-algebras. To prove this fact we need the
following lemma.

\begin{lemma}\label{L-26.7}
In weak BCC-algebras with condition $(S)$
$$
x\leqslant y\Longrightarrow x\circ z\leqslant y\circ z
$$
for all $x,y,z\in G$.
\end{lemma}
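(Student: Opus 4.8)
The plan is to work directly from the defining property of $\circ$: recall that $x\circ z$ is by definition the greatest element of $A(x,z)=\{p\in G:px\leqslant z\}$, and similarly $y\circ z$ is the greatest element of $A(y,z)$. Hence, to conclude $x\circ z\leqslant y\circ z$, it is enough to show that $x\circ z$ belongs to $A(y,z)$, i.e., that $(x\circ z)\cdot y\leqslant z$; the maximality of $y\circ z$ in $A(y,z)$ then finishes the argument.

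First I would record the membership $x\circ z\in A(x,z)$, which is immediate from Definition \ref{D-26.3}, so that $(x\circ z)\cdot x\leqslant z$. Next I would invoke the monotonicity property \eqref{e3}: since $x\leqslant y$, applying \eqref{e3} with the element $x\circ z$ in the role of $z$ gives $(x\circ z)\cdot y\leqslant (x\circ z)\cdot x$. Combining this with $(x\circ z)\cdot x\leqslant z$ and the transitivity of the natural order $\leqslant$ (which holds since, by Proposition \ref{P-11.2}, $(G;\leqslant)$ is a partially ordered set), we obtain $(x\circ z)\cdot y\leqslant z$.

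This inequality says precisely that $x\circ z\in A(y,z)$. Since $G$ is a weak BCC-algebra with condition $(S)$, the set $A(y,z)$ has a greatest element, namely $y\circ z$, so $x\circ z\leqslant y\circ z$, as required.

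There is essentially no obstacle here: the whole proof reduces to chaining $x\leqslant y$ through the order-reversing law \eqref{e3} and reading off the conclusion from the extremal characterization of $\circ$. The only thing to be careful about is the direction of \eqref{e3} (it reverses the order on the right-hand argument), which is exactly what is needed since $y$ sits to the right of $x\circ z$; one should not be tempted to use \eqref{e2}, which would be the wrong monotonicity direction.
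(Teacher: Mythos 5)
Your proof is correct and follows essentially the same route as the paper: apply \eqref{e3} to $x\leqslant y$ to get $(x\circ z)y\leqslant (x\circ z)x\leqslant z$, then use that $y\circ z$ is the greatest element of $A(y,z)$. The only cosmetic difference is that you spell out the transitivity step and the membership $x\circ z\in A(y,z)$ explicitly, which the paper leaves implicit.
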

\begin{proof}
If $x\leqslant y$, then also $(x\circ z)y\leqslant (x\circ z)x$,
by \eqref{e3}. But, according to the definition, $(x\circ
z)x\leqslant z$. Thus $(x\circ z)y\leqslant z$, which implies
$x\circ z\leqslant y\circ z$ because $y\circ z$ is the greatest
element satisfying the inequality $py\leqslant z$.
\end{proof}

\begin{theorem}\label{T-26.8} Let $(G;\cdot,0)$ be a weak BCC-algebra with condition $(S)$.
Then $(G;\circ,0)$ is a group if and only if $(G;\cdot,0)$ is
group-like.
\end{theorem}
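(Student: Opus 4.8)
The plan is to prove the two implications separately. Throughout I use the two standard reformulations of group-likeness: $(G;\cdot,0)$ is group-like if and only if there is a group $(G;*,0)$ with $xy=x*y^{-1}$, and this happens if and only if $B(0)=\{0\}$. The second equivalence is immediate: $B(0)=\{x\in G:0\leqslant x\}=\{x\in G:0x=0\}=\mathrm{Ker}\,\varphi$, so $\mathrm{Ker}\,\varphi=\{0\}$ is exactly $B(0)=\{0\}$; and when $B(0)=\{0\}$, Lemma~\ref{L-23} gives $B(a)B(a)=B(aa)=B(0)=\{0\}$ for each $a\in I(G)$, forcing $|B(a)|=1$, i.e. $G=I(G)$.

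Suppose first that $G$ is group-like, say $xy=x*y^{-1}$ for a group $(G;*,0)$. Here I would simply compute $A(x,y)$: for $p\in G$ we have $px\cdot y=(p*x^{-1})*y^{-1}$, and in a group this equals $0$ if and only if $p=y*x$. Hence each $A(x,y)=\{y*x\}$ is a singleton (so condition~$(S)$ is in fact automatic) and $x\circ y=y*x$. Thus $(G;\circ,0)$ is nothing but the opposite group of $(G;*,0)$: associativity transfers at once, $0$ is the identity, and $x^{-1}$ is the $\circ$-inverse of $x$. So $(G;\circ,0)$ is a group.

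For the converse, suppose $(G;\circ,0)$ is a group; I claim $B(0)=\{0\}$. Fix $x\in B(0)$ and let $y$ be the $\circ$-inverse of $x$, so that $x\circ y=0$. Since $x\circ y$ is by definition the greatest element of $A(x,y)$ (it exists by condition~$(S)$), and since $p\leqslant 0$ forces $p=0$ in any weak BCC-algebra (by $(iii)$), every element of $A(x,y)$ must equal $0$; as the greatest element belongs to the set, $A(x,y)=\{0\}$. In particular $0\in A(x,y)$, which means $0x\leqslant y$; but $x\in B(0)$ gives $0x=0$, so $0\leqslant y$, i.e. $y\in B(0)$. Then $xx=0\leqslant y$ shows $x\in A(x,y)=\{0\}$, hence $x=0$. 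Therefore $B(0)=\{0\}$, and by the first paragraph $G$ is group-like.

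The first implication is only bookkeeping inside the opposite group, so I expect no obstacle there. The point in the converse that is not purely mechanical --- and the only place I would have to be a little careful --- is the passage $x\in B(0)\Rightarrow y\in B(0)$ for the $\circ$-inverse $y$ of $x$: it is exactly this that places $x$ itself into $A(x,y)$ and lets the collapse $A(x,y)=\{0\}$ do its work. That collapse in turn rests on the elementary observation that no element of $G$ lies strictly below $0$, so that $x\circ y=0$ really forces $A(x,y)=\{0\}$. Beyond marshalling these two facts I do not anticipate any real difficulty.
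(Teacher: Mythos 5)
Your proof is correct, and the two directions compare with the paper's as follows. The forward direction (group-like $\Rightarrow$ $(G;\circ,0)$ is a group) is essentially the paper's argument: there, group-likeness makes every branch a singleton, so $px\leqslant y$ forces $px=y$, i.e. $p=y*x$, giving $x\circ y=y*x$; your direct computation $A(x,y)=\{y*x\}$ inside the representing group is the same calculation, and your aside that condition $(S)$ is automatic here is consistent with Example \ref{E-26.4}. The converse is where you genuinely diverge: the paper takes $x\in B(0)$ and applies the monotonicity Lemma \ref{L-26.7} to $0\leqslant x$, obtaining $x^{-1}=0\circ x^{-1}\leqslant x\circ x^{-1}=0$, hence $x^{-1}=0$ and so $x=0$; you instead note that $x\circ y=0$ (with $y$ the $\circ$-inverse of $x$) collapses $A(x,y)$ to $\{0\}$ because nothing lies below $0$ by axiom $(iii)$, deduce $y\in B(0)$ from $0\in A(x,y)$ and $0x=0$, and then use $x\in A(x,y)\Leftrightarrow y\in B(0)$ (item (3) of Lemma \ref{L-71}) to force $x=0$. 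Both routes are short and valid; the paper's is slightly slicker since Lemma \ref{L-26.7} does the work in one line, while yours is marginally more self-contained, using only the definition of $\circ$ and condition $(S)$. The final step $B(0)=\{0\}\Rightarrow$ group-like is in both cases just the identification $B(0)=\mathrm{Ker}\,\varphi$ (the paper cites Proposition 3.15 of \cite{DKB} here), and your additional observation via Lemma \ref{L-23} that all branches then become singletons is correct but not needed for that step.
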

\begin{proof} Let $(G;\circ,0)$ be a group. Consider an arbitrary element $x\in
B(0)$. Denote by $x^{-1}$ the inverse element of $x$ in a group
$(G;\circ,0)$. By Lemma \ref{L-26.7} from $0\leqslant x$ it
follows $x^{-1}=0\circ x^{-1}\leqslant x\circ x^{-1}=0$. Hence
$x^{-1}=0$. Thus $B(0)=\{0\}$. This, by Proposition 3.15 in
\cite{DKB}, shows that a weak BCC-algebra $(G;\cdot,0)$ is
group-like.

Conversely, if a weak BCC-algebra $(G;\cdot,0)$ is group-like,
then each its branch has only one element. Hence $px\leqslant y$
means $px=y$, i.e., $p*x^{-1}=y$ in the corresponding group
$(G;*,0)$. Thus $p=y*x$ is uniquely determined by $x,y\in G$.
Therefore $x\circ y=y*x$. So, $(G;\circ,0)$ is a group.
\end{proof}
\begin{corollary}\label{C-26.9}
Let $(G;\cdot,0)$ be a weak BCC-algebra with condition $(S)$. Then
$(G;\circ,0)$ is an abelian group if and only if $(G;\cdot,0)$ is
a group-like BCI-algebra.
\end{corollary}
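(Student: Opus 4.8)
The plan is to reduce everything to Theorem \ref{T-26.8} together with the standard description of group-like weak BCC-algebras. First I would record two auxiliary observations. One: a weak BCC-algebra is a BCI-algebra precisely when the identity $xy\cdot z=xz\cdot y$ holds. Two: a group-like weak BCC-algebra $(G;\cdot,0)$ is exactly one of the form $xy=x*y^{-1}$ for a (uniquely determined) group $(G;*,0)$. Combining these gives the key lemma-like fact that such a group-like algebra is a BCI-algebra if and only if the associated group $(G;*,0)$ is abelian: indeed $xy\cdot z=(x*y^{-1})*z^{-1}$ and $xz\cdot y=(x*z^{-1})*y^{-1}$, so the BCI-identity for all $x,y,z$ is equivalent to $y^{-1}*z^{-1}=z^{-1}*y^{-1}$ for all $y,z$, i.e., to commutativity of $*$.

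For the implication $(\Leftarrow)$, I would assume $(G;\cdot,0)$ is a group-like BCI-algebra. By Theorem \ref{T-26.8}, $(G;\circ,0)$ is a group; moreover, inspecting the proof of that theorem, when $G$ is group-like each branch is a singleton, so $px\leqslant y$ forces $p*x^{-1}=y$, whence $x\circ y=y*x$ in the associated group $(G;*,0)$. Since $(G;\cdot,0)$ is a BCI-algebra, $(G;*,0)$ is abelian by the observation above, so $x\circ y=y*x=x*y=y\circ x$, and therefore $(G;\circ,0)$ is an abelian group.

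For the implication $(\Rightarrow)$, I would assume $(G;\circ,0)$ is an abelian group. In particular it is a group, so Theorem \ref{T-26.8} gives that $(G;\cdot,0)$ is group-like, say $xy=x*y^{-1}$ with $x\circ y=y*x$ as above. Commutativity of $\circ$ then yields $y*x=x\circ y=y\circ x=x*y$ for all $x,y$, so $(G;*,0)$ is abelian; by the observation above, $(G;\cdot,0)$ satisfies $xy\cdot z=xz\cdot y$ and is hence a BCI-algebra. Being also group-like, it is a group-like BCI-algebra, which completes the proof.

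The only delicate point is the bookkeeping of the order of factors: the operation $\circ$ reverses the group multiplication ($x\circ y=y*x$), so one must not conflate "$(G;\circ,0)$ abelian" with "$(G;*,0)$ abelian" carelessly — though in fact the reversal is harmless precisely because these two statements coincide, and it is exactly the commutativity of the underlying group $(G;*,0)$ that is being transferred back and forth. Everything else is either cited directly (Theorem \ref{T-26.8} and the characterization of group-like weak BCC-algebras) or an elementary group-theoretic verification, so I expect no further obstacle.
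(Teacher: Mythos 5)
Your proposal is correct and follows essentially the same route as the paper's (much terser) proof: both reduce to Theorem \ref{T-26.8}, use the representation $xy=x*y^{-1}$ with $x\circ y=y*x$ for group-like weak BCC-algebras, and observe that commutativity of $*$ (equivalently of $\circ$) is exactly the identity $xy\cdot z=x*y^{-1}*z^{-1}=x*z^{-1}*y^{-1}=xz\cdot y$, i.e., the BCI-condition. Your write-up merely makes explicit the bookkeeping ($x\circ y=y*x$ and the two directions) that the paper compresses into a single line.
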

\begin{proof}
Indeed, $(G,\circ,0)$ is abelian if and only $(G;*,0)$ is abelian,
that is, if and only if $xy\cdot
z=x*y^{-1}*z^{-1}=x*z^{-1}*y^{-1}=xz\cdot y$.
\end{proof}

\begin{proposition}\label{P-26.10}
In a solid weak BCC-algebra with condition $(S)$ we have
\begin{equation}\label{2.6.1}
xy\cdot z=x(y\circ z)
\end{equation}
for $x,y$ belonging to the same branch and $z\in B(0)$.
\end{proposition}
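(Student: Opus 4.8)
The plan is to establish the two inequalities $xy\cdot z\leqslant x(y\circ z)$ and $x(y\circ z)\leqslant xy\cdot z$; together with the antisymmetry of $\leqslant$ (axiom $(iv)$) these give the identity. First I would fix the branch of $y\circ z$. Put $q:=y\circ z$ and let $y\in B(a)$, so $x\in B(a)$ as well. Since $z\in B(0)$, Proposition~\ref{P-26.1} gives $A(y,z)\subseteq B(a)$, hence $q\in B(a)$; now Lemma~\ref{L-23} locates all the relevant products: $xy,\,xq,\,xy\cdot z\in B(0)$, while $xz\in B(a)$ and $x(xy\cdot z)\in B(a)$. Also $qy\leqslant z$, because $q$ is a member of $A(y,z)$.

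For the inequality $xy\cdot z\leqslant xq$: since $xy$ and $z$ lie in the common branch $B(0)$, solidity yields $(xy\cdot z)\cdot xq=(xy\cdot xq)\cdot z$. By Proposition~\ref{P-212} (the elements $xy$ and $xq$ being in one branch) we have $xy\cdot xq\leqslant qy$, whence by \eqref{e2} $(xy\cdot xq)\cdot z\leqslant qy\cdot z=0$. Thus $(xy\cdot z)\cdot xq=0$, i.e. $xy\cdot z\leqslant xq$.

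For the reverse inequality $xq\leqslant xy\cdot z$: since $x$ and $q$ lie in one branch, solidity gives $(xq)\cdot(xy\cdot z)=\bigl(x(xy\cdot z)\bigr)\cdot q$, so it is enough to prove $x(xy\cdot z)\leqslant q$. As $q$ is the \emph{greatest} member of $A(y,z)$, this will follow once we show $x(xy\cdot z)\in A(y,z)$, that is $\bigl(x(xy\cdot z)\bigr)y\leqslant z$; applying solidity once more inside $B(a)$ (using $x(xy\cdot z),y\in B(a)$), this is equivalent to $\bigl(x(xy\cdot z)\bigr)z\leqslant y$. Rewriting $xy\cdot z=xz\cdot y$ (solidity, $x,y\in B(a)$) reduces us to $\bigl(x(xz\cdot y)\bigr)z\leqslant y$, which I would deduce from $(xz)(xz\cdot y)\leqslant y$ --- an instance of Proposition~\ref{P26}$(a)$, legitimate since $xz,y\in B(a)$ --- together with the monotonicity rules \eqref{e2} and \eqref{e3}.

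I expect this last reduction to be the main obstacle. Weak BCC-algebras are not associative, and the factor $x$ straddles the two branches $B(a)$ and $B(0)$, so moving the trailing $z$ in $\bigl(x(xz\cdot y)\bigr)z$ past the inner bracket and matching the outcome against $(xz)(xz\cdot y)$ has to be done with constant attention to which pairs of factors share a branch, so that solidity and Propositions~\ref{P26} and \ref{P-212} may really be invoked. Once this branch bookkeeping is organized, the rest is routine.
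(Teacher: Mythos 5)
Your overall structure (two inequalities, antisymmetry), your branch bookkeeping, and your proof of $xy\cdot z\leqslant x(y\circ z)$ are correct and essentially coincide with the paper's argument: the paper also gets $xy\cdot x(y\circ z)\leqslant (y\circ z)y\leqslant z$ and then swaps via solidity in $B(0)$, exactly as you do with Proposition~\ref{P-212} and \eqref{e2}. Your reduction of the reverse inequality to the membership $x(xy\cdot z)\in A(y,z)$ (so that $x(xy\cdot z)\leqslant q$ and hence $(xq)(xy\cdot z)=(x(xy\cdot z))q=0$) is also sound, and is even slightly more economical than the paper's route through \eqref{e3}.

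The genuine gap is that this membership --- the heart of the proof --- is never established, and the route you sketch for it stalls for exactly the reason you fear. After passing to the equivalent form $\bigl(x(xz\cdot y)\bigr)z\leqslant y$, you want to match it against $(xz)(xz\cdot y)\leqslant y$; but interchanging the two trailing factors $z$ and $xz\cdot y$ requires solidity applied with first factor $x$ and one of them, and both $z$ and $xz\cdot y$ lie in $B(0)$ while $x\in B(a)$, so solidity gives you nothing, and the monotonicity rules \eqref{e2}, \eqref{e3} cannot move an argument past a bracket. The obstacle disappears if you do not transpose at all: apply solidity to the same-branch pair $x,y\in B(a)$ with $Z=xy\cdot z$ to get
\begin{equation*}
\bigl(x(xy\cdot z)\bigr)\cdot y \;=\; xy\cdot (xy\cdot z)\;\leqslant\; z,
\end{equation*}
the last inequality being Proposition~\ref{P26}$(a)$ applied to $xy,z\in B(0)$. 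This one line (it is precisely the paper's opening step) gives $x(xy\cdot z)\in A(y,z)$ and closes your argument; as written, however, the proposal leaves its decisive step unproved and the proposed way of proving it does not go through.
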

\begin{proof} Let $x,y\in B(a)$, $z\in B(0)$. Then
$x(xy\cdot z)\cdot y=xy\cdot (xy\cdot z)\leqslant z$ implies
$x(xy\cdot z)\cdot y\leqslant z$. Thus $x(xy\cdot z)\in A(y,z)$.
Hence $x(xy\cdot z)\leqslant y\circ z$ and $x(y\circ z)\leqslant
x(x(xy\cdot z))$, by \eqref{e3}. Further, since $x(xy\cdot z)\in
B(a)$, we have
$$
x(x(xy\cdot z))\cdot (xy\cdot z)=x(xy\cdot z)\cdot x(xy\cdot z)=0.
$$
So, $x(x(xy\cdot z))\leqslant xy\cdot z$. Consequently,
$$
x(y\circ z)\leqslant x(x(xy\cdot z))\leqslant xy\cdot z.
$$

On the other hand, $y\circ z\in A(y,z)\subset B(a)$, by
Proposition \ref{P-26.1}. This, by Lemma \ref{L-21}, gives
$x(y\circ z)\in B(0)$. Thus,
$$
(xy\cdot x(y\circ z))\cdot (y\circ z)y=(xy\cdot (y\circ z)y)\cdot
x(y\circ z)\leqslant x(y\circ z)\cdot x(y\circ z)=0.
$$
Consequently, $xy\cdot x(y\circ z)\leqslant (y\circ z)y\leqslant
z$.

Therefore,
$$
0=(xy\cdot x(y\circ z))\cdot z=(xy\cdot z)\cdot x(y\circ z),
$$
which implies $(xy\cdot z)\leqslant x(y\circ z)$. Hence $ xy\cdot
z=x(y\circ z)$.
\end{proof}

\begin{corollary}\label{C-26.11}
In a solid weak BCC-algebra with condition $(S)$ the branch $B(0)$
satisfies the identity $\eqref{2.6.1}$. \hfill$\Box{}$
\end{corollary}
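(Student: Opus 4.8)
The plan is to read this off directly from Proposition \ref{P-26.10}. By Lemma \ref{L-22}, $B(0)$ is a subalgebra of $G$ --- indeed the maximal BCC-algebra contained in $G$ --- so every triple $x,y,z\in B(0)$ automatically satisfies the hypotheses of Proposition \ref{P-26.10}: the elements $x$ and $y$ lie in the same branch, namely $B(0)$ itself, and $z\in B(0)$. Hence $xy\cdot z=x(y\circ z)$ for all $x,y,z\in B(0)$, which is precisely \eqref{2.6.1} restricted to this branch.

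The only point worth checking is that the right-hand side is meaningful, i.e.\ that the operation $\circ$ does not lead outside $B(0)$. This is guaranteed by Proposition \ref{P-26.1}: taking $a=b=0$ there we get $A(y,z)\subset B(0\cdot 00)=B(0)$ for $y,z\in B(0)$, so $y\circ z\in B(0)$ and then $x(y\circ z)\in B(0)$ as well. Thus $(B(0);\cdot,\circ,0)$ carries both operations and identity \eqref{2.6.1} holds throughout it. There is no genuine obstacle; the content of the corollary is simply to isolate the BCC-algebra $B(0)$ as a place where Proposition \ref{P-26.10} applies without any branch restriction.
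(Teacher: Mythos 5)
Your proposal is correct and matches the paper's (implicit) argument: the corollary is an immediate specialization of Proposition~\ref{P-26.10} to $x,y,z\in B(0)$, since any two elements of $B(0)$ lie in the same branch and $z\in B(0)$ holds trivially. The added observation that $y\circ z\in B(0)$ (via Proposition~\ref{P-26.1} with $a=b=0$) is a harmless bonus, not needed for the identity itself.
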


\begin{corollary}\label{C-26.12}
A BCI-algebra with condition $(S)$ satisfies the identity
$\eqref{2.6.1}$. \hfill$\Box{}$
\end{corollary}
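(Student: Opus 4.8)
The plan is to obtain this from Proposition~\ref{P-26.10} by deleting its two restrictive hypotheses. First I would recall that a BCI-algebra is a solid weak BCC-algebra: being characterized among weak BCC-algebras by the identity $xy\cdot z=xz\cdot y$, it satisfies the ``solidity'' identity not just for $x,y$ in a common branch but for \emph{all} $x,y,z$; likewise, axiom $(vi)$ gives $u\cdot uv\leqslant v$ and hence $(u\cdot (u\cdot v))\cdot v=0$ unconditionally. I would also note that, since a BCI-algebra is solid, Proposition~\ref{P-26.1} applies and $A(y,z)$ is non-empty for every $y,z$, so under condition $(S)$ the element $y\circ z$ is defined for all $y,z\in G$.

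Next I would simply replay the argument of Proposition~\ref{P-26.10}, this time for arbitrary $x,y,z\in G$. The observation that makes this legitimate is that in that proof the hypotheses ``$x,y$ belong to the same branch'' and ``$z\in B(0)$'', together with Lemmas~\ref{L-21} and~\ref{L-23}, are invoked \emph{solely} to certify that the pairs of elements to which solidity and Proposition~\ref{P26} are applied lie in one branch --- e.g.\ ``$x(xy\cdot z)\in B(a)$'' is used only to justify $x\big(x(xy\cdot z)\big)\cdot(xy\cdot z)=x(xy\cdot z)\cdot x(xy\cdot z)=0$, and ``$y\circ z\in B(a)$, hence $x(y\circ z)\in B(0)$'' only to justify the two solidity rearrangements in the second half. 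In a BCI-algebra each such conditional identity is an outright identity, so all of this branch bookkeeping evaporates.

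Carrying this out gives, for any $x,y,z\in G$: on one side $x(xy\cdot z)\in A(y,z)$, whence $x(xy\cdot z)\leqslant y\circ z$ and therefore $x(y\circ z)\leqslant x\big(x(xy\cdot z)\big)\leqslant xy\cdot z$; on the other side $xy\cdot x(y\circ z)\leqslant (y\circ z)y\leqslant z$, so $(xy\cdot z)\cdot x(y\circ z)=0$, i.e.\ $xy\cdot z\leqslant x(y\circ z)$. Combining the two inequalities yields $xy\cdot z=x(y\circ z)$ for all $x,y,z\in G$, which is exactly \eqref{2.6.1}. The only point that genuinely needs care is the one flagged above --- verifying that the branch conditions in the proof of Proposition~\ref{P-26.10} were never used for anything beyond licensing the conditional forms of solidity and of Proposition~\ref{P26}; once that audit is done there is no computational obstacle, and the corollary follows at once.
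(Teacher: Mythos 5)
Your proposal is correct and is essentially the paper's (implicit) justification: the paper states the corollary with no proof, the intended point being exactly that in a BCI-algebra the exchange identity $xy\cdot z=xz\cdot y$ and axiom $(vi)$ hold globally, so the argument of Proposition~\ref{P-26.10} runs unchanged without the branch hypotheses. Your explicit audit that those hypotheses were used only to license the conditional (solid) rearrangements and Proposition~\ref{P26} is the right observation, since the corollary is not a literal specialization of Proposition~\ref{P-26.10} but a rerun of its proof.
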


\begin{theorem}\label{T-26.13}
A solid weak BCC-algebra with condition $(S)$ is restricted if and
only if some its branch is restricted.
\end{theorem}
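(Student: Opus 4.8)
The plan is to prove the nontrivial implication; the reverse one is immediate, since if $G$ is restricted then \emph{every} branch of $G$ — e.g. $B(0)$ — has a greatest element, so in particular some branch of $G$ is restricted. For the converse I would fix a branch $B(\alpha)$ having a greatest element, denote it by $1_\alpha$, and show that an arbitrary branch $B(\gamma)$, $\gamma\in I(G)$, also has a greatest element; the key tool is the interplay between the sets $A(x,y)=\{p\in G:px\leqslant y\}$ and condition $(S)$.

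The heart of the argument will be to show that for every $\beta\in I(G)$ the set $A(\beta,1_\alpha)$ is an \emph{entire} branch. By Proposition \ref{P-26.1}, $A(\beta,1_\alpha)$ is a non-empty subset of the branch $B(s)$ with $s=\beta\cdot 0\alpha\in I(G)$. Choosing any $p_0\in A(\beta,1_\alpha)$, from $p_0\beta\leqslant 1_\alpha$ we see that $p_0\beta$ and $1_\alpha$ are comparable, hence lie in the same branch, so $p_0\beta\in B(\alpha)$; but $p_0\in A(\beta,1_\alpha)\subseteq B(s)$ and $\beta\in B(\beta)$, so by Lemma \ref{L-23} also $p_0\beta\in B(s)B(\beta)=B(s\beta)$, and therefore $B(s\beta)=B(\alpha)$, i.e. $s\beta=\alpha$. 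Now for an \emph{arbitrary} $p\in B(s)$, Lemma \ref{L-23} again gives $p\beta\in B(s)B(\beta)=B(s\beta)=B(\alpha)$, and since $1_\alpha$ is the greatest element of $B(\alpha)$ this yields $p\beta\leqslant 1_\alpha$, i.e. $p\in A(\beta,1_\alpha)$. Hence $A(\beta,1_\alpha)=B(s)$, and by condition $(S)$ this set — that is, the branch $B(s)$ — has a greatest element.

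It then remains to observe that every branch of $G$ arises in this way, i.e. that the assignment $\beta\mapsto s=\beta\cdot 0\alpha$ maps $I(G)$ onto $I(G)$. Here one uses that $I(G)$ is group-like: if $a\in I(G)$ and $\varphi(a)=0$, then $a=\varphi^2(a)=\varphi(0)=0$, so there is a group $(I(G);*,0)$ with $xy=x*y^{-1}$ on $I(G)$. Then $\beta\cdot 0\alpha=\beta*(0\alpha)^{-1}$, so $\beta\mapsto\beta\cdot 0\alpha$ is right translation by the fixed element $(0\alpha)^{-1}$, hence a bijection of $I(G)$. Consequently, for each $\gamma\in I(G)$ there is $\beta\in I(G)$ with $\beta\cdot 0\alpha=\gamma$, and then $B(\gamma)=A(\beta,1_\alpha)$ has a greatest element by the previous paragraph. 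Thus every branch of $G$ has a greatest element, i.e. $G$ is restricted.

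The step I expect to demand the most care is the identification of $A(\beta,1_\alpha)$ with the \emph{whole} branch $B(s)$: Proposition \ref{P-26.1} only supplies the inclusion $A(\beta,1_\alpha)\subseteq B(s)$, and the reverse inclusion is precisely where one genuinely uses that $1_\alpha$ is the \emph{greatest} element of its branch (not merely some element of it), through the branch-product identity of Lemma \ref{L-23}. One should also keep track of the fact that $s=\beta\cdot 0\alpha$ and $s\beta$ lie in $I(G)$, so that all the branches written down are legitimate; this is automatic since $B(s)$ is already named as a branch in Proposition \ref{P-26.1} and $I(G)$ is a subalgebra.
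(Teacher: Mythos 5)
Your proof is correct, but it takes a genuinely different route from the paper's. The paper fixes the restricted branch $B(a)$ and, for an arbitrary $b\in I(G)$, exhibits the greatest element of $B(b)$ explicitly as $m=b\circ(0a\circ 1_a)$: it first shows $xb\leqslant 0a\circ 1_a$ for every $x\in B(b)$, then checks by a computation with $\varphi$ that $0a\circ 1_a\in B(0)$, and finally applies Proposition \ref{P-26.10} (the identity $xy\cdot z=x(y\circ z)$) to get $xm=xb\cdot(0a\circ 1_a)=0$. You bypass Proposition \ref{P-26.10} altogether: using Proposition \ref{P-26.1}, Lemma \ref{L-23}, disjointness of branches and the maximality of $1_\alpha$, you identify each set $A(\beta,1_\alpha)$ with the whole branch $B(\beta\cdot 0\alpha)$, so condition $(S)$ hands you its greatest element (namely $\beta\circ 1_\alpha$) at once; that every branch is reached then follows from your observation that $I(G)$ is group-like, so $\beta\mapsto\beta\cdot 0\alpha$ is a bijection of $I(G)$ (a right translation in the associated group), a fact available from the cited characterization of group-like weak BCC-algebras though never used in the paper's proof. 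Both arguments are sound; the paper's approach yields an explicit formula for the greatest element of each branch and stays entirely inside the $\circ$-calculus developed in this section, while yours is more structural, produces the pleasant byproduct that the sets $A(\beta,1_\alpha)$ are full branches, and trades the $(S)$-identity of Proposition \ref{P-26.10} for the group structure on $I(G)$. (A small simplification available to you: the element $s=\beta\cdot 0\alpha$ is shown in the proof of Proposition \ref{P-26.1} to lie in $A(\beta,1_\alpha)$, so you could take $p_0=s$ and skip the separate choice of $p_0$.)
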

\begin{proof}
Assume that some branch, for example $B(a)$, is restricted and
$1_a$ is the greatest element of $B(a)$. Then $xb\in B(0)$ for
every $x\in B(b)$ and an arbitrary $b\in I(G)$. Thus $xb\cdot
0a\in B(0\cdot 0a)=B(a)$. Hence $xb\cdot 0a\leqslant 1_a$, i.e.,
$\,xb\leqslant 0a\circ 1_a, $ according to the definition of
$\,0a\circ 1_a$. Consequently,
\begin{equation}\label{2.6.2}
xb\cdot (0a\circ 1_a)=0 \ \ \ {\rm and } \ \ \ (0a\circ 1_a)\cdot
0a\leqslant 1_a.
\end{equation}
Hence $(0a\circ 1_a)\cdot 0a\in B(a)$, i.e., $a\leqslant (0a\circ
1_a)\cdot 0a$. Since $\varphi(x)=0x$ is an endomorphism
(Proposition \ref{P-211}), from the last inequality, applying
Theorem \ref{T-fi} $(2)$, we obtain
$$
0a=0((0a\circ 1_a)\cdot 0a)=0(0a\circ 1_a)\cdot (0\cdot
0a)=0(0a\circ 1_a)\cdot a.
$$
Therefore
$$
0=(0(0a\circ 1_a)\cdot a)\cdot 0a\leqslant 0(0a\circ 1_a)\cdot
0=0(0a\circ 1_a),
$$
by $(i')$. This, by Theorem \ref{T-fi} $(1)$, gives $0=0\cdot
0=0\cdot 0(0a\circ 1_a)\leqslant 0a\circ 1_a$. So, $\,0a\circ
1_a\in B(0)$.

Now let $m=b\circ (0a\circ 1_a)$. Then for every $x\in B(b)$,
according to Proposition \ref{P-26.10} and \eqref{2.6.2}, we have
$$
xm=x(b\circ (0a\circ 1_a))=xb\cdot (0a\circ 1_a)=0,
$$
which implies $x\leqslant m$. Therefore $m$ is the greatest
element of the branch $B(b)$.

The converse statement is obvious.
\end{proof}

\section{Conclusions}

In the study of various types of algebras inspired by logic a very important role plays the identity 
$xy\cdot z=xz\cdot y$ which is not satisfied in weak-BCC-algebras. In this paper we described weak-BCC-algebras satisfying this identity in the case when elements $x$ and $y$ (or $x$ and $z$) are in the same branch. Using the method presented above we can obtain results which are similar to results proved earlier for BCI-algebras. Our method based on the restriction of the verification of various properties to their verification only to elements belonging to the same branch makes it possible to study these properties for the wider class of algebras.

Further results on solid weak BCC-algebras one can find in
\cite{BT} and \cite{Thom}. In the first paper some important identities satisfied in weak BCC-algebras are described; in the second -- $f$-derivations of weak
BCC-algebras. Since obtained results are very similar to those proved for example for BCI-algebras seems that verification of various properties can be reduced to verification in
branches only which is very important for computer verification.


\begin{thebibliography}{90}
\bibitem{Ch'02a} {\sf  I. Chajda, R. Hala\v{s}}, {\em Algebraic properties of pre-logics},
Math. Slovaca {52} (2002), pp. 157--175.
\bibitem{Ch'07} {\sf I. Chajda, R. Hala\v{s}}, {\em Distributive implication groupoids}, Central
Eur. J. Math. {5} (2007), pp. 484--492.
\bibitem{Ch'08} {\sf I. Chajda, R. Hala\v{s}}, {\em A basic algebra is an MV-algebra if and only if
it is a BCC-algebra}, Intern. J. Theor. Phys. {47} (2008),
pp. 261--267.
\bibitem{Ch'04} {\sf I. Chajda, R. Hala\v{s}, J. K\"uhr}, {\em Implication in MV-algebras}, Algebra
Universalis {52} (2004), pp. 377--382.
\bibitem{Ch'07a} {\sf I. Chajda, R. Hala\v{s}, J. K\"uhr}, {\em When is a BCC-algebra equivalent to
NMV-algebra}, Demonstratio Math. {40} (2007), pp. 759--768.
\bibitem{Du'86} {\sf W.A. Dudek}, {\em On some BCI-algebras with condition $(S)$}, Math.
Japonicae 31 (1986), pp. 25--29.
\bibitem{Du'88} {\sf W.A. Dudek}, {\em On group-like BCI-algebras}, Demonstratio Math. {21} (1988),
pp. 369--376.
\bibitem{Du'90} {\sf W.A. Dudek}, {\em On BCC-algebras}, Logique et Analyse 129--130 (1990),
pp. 103--111.
\bibitem{Du'92} {\sf W.A. Dudek}, {\em On proper BCC-algebras}, Bull. Inst. Math. Acad. Sinica
20 (1992), pp. 137--150.
\bibitem{Du'96} {\sf W.A. Dudek}, {\em Remarks on the axioms system for BCI-algebras}, Prace
Naukowe WSP w Czestochowie, ser. Matematyka {2} (1996), pp. 46--61.
\bibitem{Du'10} {\sf W.A. Dudek}, {\em Solid weak BCC-algebras},
Intern. J. Computer Math. 88 (2011), pp. 2915--2925.
\bibitem{DKB} {\sf W.A. Dudek, B. Karamdin, S.A. Bhatti}, {\em Branches and ideals of weak
BCC-algebras}, Algebra Coll. 18 (2011), pp. 899--914.
\bibitem{DT} {\sf W.A. Dudek, J. Thomys}, {\em On decompositions of BCH-algebras},
Math. Japonicae {35} (1990), pp. 1131--1138.
\bibitem{DZW} {\sf W.A.Dudek, X.H.Zhang, Y.Q.Wang}, {\em Ideals and atoms of BZ-algebras},
Math. Slovaca 59 (2009), pp. 387--404.
\bibitem{Huang} {\sf Y.S. Huang}, {\em BCI-algebra}, Science Press, Beijing 2006.
\bibitem{II'66} {\sf Y. Imai, K. Is\'eki}, {\em On axiom system of propositional calculi}, Proc. Japan
Acad. 42 (1966), pp. 19--22.
\bibitem{Ior} {\sf A. Iorgulescu}, {\em Algebras of logic as BCK algebras}, Acad. Econom. Studies, Bucharest, 2008.
\bibitem{I'77} {\sf K. Is\'eki}, {\em On BCK-algebras with condition $(S)$}, Math. Seminar Notes 5 (1977), pp. 215--222.
\bibitem{ITan'78} {\sf K. Is\'eki, S. Tanaka}, {\em An introduction to the theory of
BCK-algebras}, Math. Japonicae 23 (1978), pp. 1--26.
\bibitem{BT} {\sf B. Karamdin, J. Thomys}, {\em Quasi-commutative weak
BCC-algebras}, Sci. Math. Japonicae, in print
\bibitem{Ko} {\sf Y. Komori}, {\em The class of BCC-algebras is not variety}, Math. Japonicae
29 (1984), pp. 391--394.
\bibitem{MX} {\sf J. Meng, X. L. Xin}, {\em Positive implicative BCI-algebras}, Pure Appl. Math. 9 (1993), pp. 19--23.
\bibitem{Thom} J. Thomys, {\it f-derivations of weak
BCC-algebras}, Intern. J. Algebra 5 (2011), pp. 325--334.
%\bibitem{ZL} J. Zhan and Y.L. Liu, {\it On f-derivations of
%BCI-algebras}, Int. J. Math. Math. Sci. 11 (2005), pp.
%1675--1684.
\bibitem{Zh'07} {\sf X.H. Zhang}, {\em BIK$^+$-logic and non-commutative fuzzy logics},
Fuzzy Systems Math. 21 (2007), pp. 31--36.
\bibitem{ZY} {\sf X.H. Zhang, R. Ye}, {\em BZ-algebras and groups}, J. Math. Phys. Sci. {29} (1995), pp. 223--233.
\end{thebibliography}
\end{document}